\let\oldtocsection=\tocsection
\let\oldtocsubsection=\tocsubsection
\let\oldtocsubsubsection=\tocsubsubsection
\renewcommand{\tocsection}[2]{\hspace{0em}\oldtocsection{#1}{#2}}
\renewcommand{\tocsubsection}[2]{\hspace{2em}\oldtocsubsection{#1}{#2}}
\renewcommand{\tocsubsubsection}[2]{\hspace{3em}\oldtocsubsubsection{#1}{#2}}
\newtheorem{thm}{Theorem}
\newtheorem{lem}[thm]{Lemma}
\newtheorem{prop}[thm]{Proposition}
\newtheorem{cor}[thm]{Corollary}
\newtheorem{defi}[thm]{Definition}
\newtheorem{rem}[thm]{Remark}
\newtheorem{ex}[thm]{Example}
\numberwithin{equation}{section}
\numberwithin{thm}{section}
\newcommand{\R}{\mathbb{R}}
\newcommand{\N}{\mathbb{N}}
\newdimen\AAdi%
\newbox\AAbo%
\def\AAk#1#2{\setbox\AAbo=\hbox{#2}\AAdi=\wd\AAbo\kern#1\AAdi{}}%
\renewcommand{\P}{\mathbb{P}}
\newcommand{\E}{\mathbb{E}}
\newcommand{\Var}{\operatorname{Var}}
\def\f{\frac}
\def\Hess{\mathop{\rm Hess}}
\def\<{\langle}
\def\>{\rangle}
\begin{document}
\title[Log-Hessian, deviation bounds 
and regularization effect in $\mathbb{L}^1$]{Log-Hessian and deviation bounds for Markov semi-groups, 
and regularization effect in $\mathbb{L}^1$}
\author{N. Gozlan, Xue-Mei Li, M. Madiman, C. Roberto, P.-M. Samson}

\thanks{Supported by the grants ANR-15-CE40-0020-03 - LSD - Large Stochastic Dynamics; ANR 10-LABX-0058 - Labex Bezout; ANR 11-LBX-0023-01 - Labex 
MME-DII and the grant DMS-1409504 from the U.S. National Science Foundation. Nathael Gozlan, Cyril Roberto and Paul-Marie Samson are supported by a grant of the Simone and Cino Del Duca foundation. This research has 
been conducted within the FP2M federation (CNRS FR 2036).}

\address{Universit\'e de Paris, CNRS, MAP5 UMR 8145, F-75006 Paris, France}
\address{Department of Mathematics, Imperial College London,  London SW7 2AZ, UK}
\address{University of Delaware, Department of Mathematical Sciences, 501 
Ewing Hall, Newark DE 19716, USA.}
\address{Universit\'e Paris Nanterre, Modal'X, FP2M, CNRS FR 2036, 200 avenue de la R\'epublique 92000 Nanterre, France}
\address{LAMA, Univ Gustave Eiffel, UPEM, Univ Paris Est Creteil, CNRS, F-77447 Marne-la-Vall\'ee, France}

\email{nathael.gozlan@u-paris.fr, xue-mei.li@imperial.ac.uk, madiman@udel.edu, croberto@math.cnrs.fr, paul-marie.samson@univ-eiffel.fr}
\keywords{Hypercontractivity, Concentration, Ornstein-Uhlenbeck semi-group, Talagrand's conjecture, Poisson measure, $M/M/\infty$ queue}
\subjclass{60E15, 32F32 and 26D10}

\date{}

\begin{abstract}
{It is well known that some important Markov semi-groups have a ``regularization effect" -- 
as for example the hypercontractivity property of the noise operator on the Boolean hypercube or the 
Ornstein-Uhlenbeck semi-group on the real line, which applies to functions in $L^p$ for $p>1$. 
Talagrand had conjectured in 1989 that the noise operator on the Boolean hypercube has a 
further subtle regularization property for functions that are just integrable, but this conjecture remains open. 
Nonetheless, the Gaussian analogue of this conjecture was proven in recent years by Eldan-Lee and Lehec, 
by combining an inequality for the log-Hessian of the Ornstein-Uhlenbeck semi-group with a new deviation inequality 
for log-semi-convex functions under Gaussian measure.
In this work, we explore the question of how much more general this phenomenon is. 
Specifically, our first goal is to explore the validity of both these ingredients for some diffusion semi-groups in $\mathbb{R}^n$, as well
as for the $M/M/\infty$ queue on the non-negative integers and the Laguerre semi-groups on the positive real line. 
Our second goal is to prove a one-dimensional regularization effect
for these settings, even in those cases where these ingredients are not valid.}
\end{abstract}

\maketitle

\tableofcontents

\section{Introduction}

The aim of this paper is threefold. First, we give explicit formulas for the log-Hessian of some diffusion semi-groups in $\mathbb{R}^n$, 
and explicit lower bounds on some discrete analogue of the log-Hessian for the $M/M/\infty$ queuing process on the non-negative integers $\mathbb{N}:=\{0,1,\dots\}$. 
Second, we investigate deviation bounds for log-semi-convex functions, in 
the above two settings in dimension 1.
Third, we prove in each context an analogue of the Talagrand regularization effect, again in dimension 1, by different means: in the continuous setting of some class of  diffusion semi-groups we generalize the approach developed in \cite{GMRS17,EL18,Leh16} based on the log-Hessian and deviation bounds just mentioned; while for the $M/M/\infty$ queuing process, we 
use a direct computation.

\medskip

We will now present the conjecture by Talagrand, that is the starting point of our investigations, first in its original version on the discrete hypercube and then in the continuous setting of the Ornstein-Uhlenbeck process, before moving to a historical presentation of its resolution in the 
continuous setting and the presentation of our results.

\medskip

Consider the following infinitesimal generator on the $n$-dimensional hypercube
$\Omega_n:=\{-1,1\}^n$, acting on functions as $Lf(\sigma)=\frac{1}{2} \sum_{i=1}^n (f(\sigma^i) - f(\sigma))$. Here $\sigma^i$ is the configuration with the $i$-th coordinate flipped (\textit{i.e.}\ $\sigma_j^i=\sigma_j$ for all $j \neq i$ and $\sigma^i_i=-\sigma_i$). Denote by $(P_s)_{s \geq 0}$ the associated semi-group (sometimes called ``convolution 
by a biased coin" in the literature), and by $\mu_n\equiv 2^{-n}$ the uniform measure on $\Omega_n$ which is reversible for $L$. In \cite{Tal89:1}, Talagrand conjectured (see Conjecture 1 in \cite{Tal89:1}) that for any 
$s>0$, { it holds that }
\[
\lim_{t \to \infty} t \sup_n \sup_{ f \in \mathcal{F}_n} \mu_n(\{ \sigma : P_sf(\sigma) \geq t \}) = 0 { ,}
\]
where
$\mathcal{F}_n:= \{f \colon \Omega_n \to [0,\infty) \mbox{ with } \| f\|_1 = 1 \}$, 
and $\| f\|_p := (\sum_{\sigma \in \Omega_n} |f(\sigma)|^p \mu_n(\sigma))^\frac{1}{p}$ stands for the $\mathbb{L}^p(\mu_n)$-norm of 
$f$, $p \geq 1$. 
Moreover Talagrand formulated the following stronger statement (Conjecture 2 in \cite{Tal89:1}) : 
\begin{equation}\label{eq:Conjecture 2}
t \sup_{ f \in \mathcal{F}_n} \mu_n(\{ \sigma : P_sf(\sigma) \geq t \}) \leq c \frac{1}{\sqrt{\log t}},\qquad\qquad t>1
\end{equation}
for some constant $c=c_s$ depending only on $s$ (and not on $n$). Both conjectures  are still open.

If one assumes that $\| f \|_p =1$ for some $p>1$, then Markov's inequality would give a universal upper bound or order $1/t^{p-1}$ which is much better than $1/\sqrt{\log t}$. 
The hypercontractivity property of the semi-group \cite{Bon70, Bec75} also ensures that, if $f:\Omega_n \to \R$ and $p \geq 1$, then $\|P_sf\|_q \leq \|f\|_p$ with $q = 1+(p-1)e^{2s}$. But this inequality does not say 
anything when $p=1$. Talagrand's conjecture can therefore be seen as a weak $\mathbb{L}^1$ type regularization property of the semi-group. For this reason we may call bounds of the type \eqref{eq:Conjecture 2} a regularization effect in $\mathbb{L}^1$ (or Talagrand regularization effect), even for a fixed dimension $n$.

\medskip

{While} the above problems (Conjectures 1 and 2) are still open, { a recent series of papers deals with a natural continuous counterpart 
to the conjectures, related to the Ornstein-Uhlenbeck semi-group}. Denote 
by $\gamma_n$ the standard Gaussian (probability) measure in dimension $n$, with density 
\[
\mathbb{R}^n \ni x \mapsto (2 \pi)^{-n/2} \exp \left\{ -\frac{|x|^2}{2} \right\} ,
\]
where $|\,\cdot\,|$ denotes the standard Euclidean norm on $\R^n$. For $ p \geq 1$, let $\mathbb{L}^p(\gamma_n)$ be the set of 
measurable functions $f \colon \mathbb{R}^n \to \mathbb{R}$ such that $|f|^p$ is integrable with respect to $\gamma_n$. 
Then, given $g \in \mathbb{L}^1(\gamma_n)$, the Ornstein-Uhlenbeck semi-group is defined by the so-called Mehler representation as
\begin{equation} \label{eq:ou}
P_t^{\text{ou}}g(x) 
:= 
\int g\left(e^{-t}x + \sqrt{1-e^{-2t}}y\right)\,d\gamma_n(y) { ,}
 \qquad \qquad x \in \mathbb{R}^n , \; t \geq 0.
\end{equation}
By a change of variable, we may also write
\begin{equation} \label{eq:ou2}
P_t^{\text{ou}}g(x) = \frac{1}{Z_t}\int g(z) M_t(x,z)dz { ,}
 \qquad \qquad x \in \mathbb{R}^n , \; t \geq 0{ ,}
\end{equation}
where
\[
M_t(x,z)
:= 
\exp\left\{ -\frac{|z-e^{-t}x|^2}{2(1-e^{-2t})} \right\}  
=
e^{ -c_t^2|e^tz-x|^2/2}
\qquad \qquad x,z \in \mathbb{R}^n , \; t \geq 0
\]
and $Z_t = (2\pi(1-e^{-2t}))^{n/2}$ is the normalizing constant and 
\begin{equation}\label{eq:ct}
c_t:=\frac{e^{-t}}{\sqrt{1-e^{-2t}}} \qquad t >0 .
\end{equation}
The semi-group $(P_t^{\text{ou}})_{t \geq 0}$ is associated to the infinitesimal diffusion operator 
$L^{\text{ou}}:=\Delta - x \cdot \nabla$ and enjoys the exact same hypercontractivity property as the convolution by biased coin operator 
on the discrete hypercube defined above. It is therefore natural to ask for an upper bound for 
\[
S_s(t):=t \sup_{ f \geq 0, \| f\|_1 =1} \gamma_n(\{ \sigma : P_s^{\text{ou}}f(\sigma) \geq t \}), \qquad s>0 .
\]
In \cite{EL18,Leh16} Eldan, Lee and Lehec fully solved the problem by proving that
for any $s >0$ there exists a constant $c_s \in (0, \infty)$ (depending only on $s$ and not on the dimension $n$) such that $S_s(t) \leq \frac{c_s}{\sqrt{\log t}}$ for all $t>1$
and this bound is optimal in the sense that the factor $\sqrt{\log t}$ cannot be improved.
In an earlier paper \cite{BBBOW13}, Ball, Barthe, Bednorz, Oleszkiewicz and Wolff already obtained a similar bound but with a 
constant $c_{s,n}$ depending on the dimension $n$ plus some extra $\log \log t$ factor in the numerator. 
Later Eldan and Lee \cite{EL18}, using tools from stochastic calculus, proved that the above bound holds 
with a constant $c_s$ independent on $n$ but again with the extra $\log \log t$ factor in the numerator. 
Finally Lehec \cite{Leh16}, following \cite{EL18}, removed the $\log \log 
t$ factor.

\medskip

In both Eldan-Lee and Lehec's papers, the two key ingredients are the following: 
\begin{itemize}
\item[(1)]  \textbf{log-semi-convexity:}\\
for any $s>0$, the Ornstein-Uhlenbeck semi-group satisfies, for all non-negative function $g \in \mathbb{L}^1(\gamma_n)$,
\[
\mathrm{Hess}\,(\log P_s^{\text{ou}} g) \geq -c_s^2 \mathrm{Id},
\]
where $\mathrm{Hess}$ denotes the Hessian matrix and $\mathrm{Id}$ the identity matrix of $\mathbb{R}^n$.\\
\item[(2)]  \textbf{deviation for log-semi-convex functions:}\\for any positive function $g$ with $\mathrm{Hess}\,(\log g) \geq -\beta \mathrm{Id}$, for some $\beta \geq 0$, and satisfying $\int g \,d\gamma_n =1$, one 
has
\[
\gamma_n(\{g \geq t \}) \leq \frac{C_\beta}{t \sqrt{\log t}} \qquad \qquad \forall t>1,
\]
with $C_\beta = \alpha \max(1, \beta)$.
\end{itemize}

\medskip

Let us underline that the difficulty of the questions raised by Talagrand 
completely relies on the uniformity in the dimension $n$. For simplicity we deal with the discrete setting but 
similar considerations could be done in the continuous as well (see \cite{BBBOW13}).
For a fixed integer $n$, proving \eqref{eq:Conjecture 2} with a constant $c$ depending on $s$ and on the dimension $n$ is easy.

This can be seen using for instance the following line of reasoning. 
 Observe that for all $f :\Omega_n \to \R$, it holds
\[
P_sf(\sigma)=\int f(\eta) K_s(\sigma,\eta)d\mu_n(\eta),
\]
with $K_s(\sigma,\eta)=\prod_{i=1}^n (1+e^{-s}\sigma_i\eta_i)$ and so
\[
\sup_{f \in \mathcal{F}_n} P_sf(\sigma) = \sup_{\eta \in \Omega_n} K_s(\sigma,\eta) = (1+e^{-s})^n ,\qquad \qquad \forall \sigma \in \Omega_n
\]
Therefore, for $t \geq0$,
\begin{align*}
t \sup_{ f \in \mathcal{F}_n} \mu_n(\{ \sigma : P_sf(\sigma) \geq t \})
& \leq 
t  \mu_n(\{ \sigma : \sup_{ f \in \mathcal{F}_n} P_sf(\sigma) \geq t \})  
= t \mathbf{1}_{\{t\leq (1+e^{-s})^n\}}.
\end{align*}
In particular,  
\[
t \sup_{f\in \mathcal{F}_n}\mu_n(\{ \sigma : P_sf(\sigma) \geq t \}) = 0
\] 
as soon as $t>(1+e^{-s})^n$ and so, for any fixed $s>0$ and $n\in \N$ it clearly exists a constant $c = c_{s,n}$ such that \eqref{eq:Conjecture 2} is satisfied. 

We may call this approach the ``strategy of the uniform bound on $P_t$" in the reminder of the paper. 
As observed in \cite{BBBOW13,GMRS17}, there does exist an other strategy based on uniform bounds, but with a different flavor. 
To illustrate this, in the continuous now, we recall a result from  \cite{GMRS17} (that we generalize in dimension 1 in Lemma \ref{lem:technical}): any
$g \colon \mathbb{R}^n \to (0,\infty)$ with $\int g d\gamma_n=1$,  smooth and such that $\mathrm{Hess}\,(\log g) \geq -\beta \mathrm{Id}$ for some $\beta \geq 0$,
satisfies
\begin{equation} \label{eq:g}
g(x) \leq (1+\beta)^\frac{n}{2} e^{\frac{1}{2}|x|^2} , \qquad \forall x \in \mathbb{R}^n .
\end{equation}
Therefore, if one knows \textit{a priori} that $P_tf$ is log-semi-convex, 
uniformly in $f$, then $P_tf(x) \leq (1+\beta)^\frac{n}{2} e^{\frac{1}{2}|x|^2}$ with some $\beta$ depending only on $t$
and deviation bounds for $P_t f$ would follow from deviation bounds for $(1+\beta)^\frac{n}{2} e^{\frac{1}{2}|x|^2}$ (a quantity which does not depend on $f$ anymore).
To distinguish the two approaches, we may call the latter the "strategy of the uniform bound for log-semi-convex functions". 

As pointed out in \cite{BBBOW13,GMRS17} and as one can realize from the above discussion, the uniform bounds depend on the dimension $n$, therefore there is no hope to prove Talagrand's conjectures by means of any of the above two strategies.

\medskip

At this point, we note that Problem (1) (log-semi-convexity) is closely connected to certain areas of geometric analysis.
On the one hand, it is at the heart of some of the fundamental problems in the Analysis of Loop Spaces. A program of Gross \cite{Gro75} is to prove
Logarithmic Sobolev and Poincar\'e inequalities from Gaussian measures to 
Brownian motion and conditioned Brownian motion measures.
The main problem involves constructing an Ornstein-Uhlenbeck process on the space of loops,  
obtaining integration by parts formula for these measures,  and Poincar\'e inequalities. 
The latter is notoriously difficult, with counter examples by Eberle \cite{Ebe02} and defective inequalities by Gong-Ma \cite{GM98}. 
The Poincar\'e inequality is only proven to hold for  very few classes of 
manifolds: see Aida \cite{Ai00} 
for asymptotically flat manifolds and Chen-Li-Wu \cite{CLW} for hyperbolic spaces. 
The idea is to compare $\log p_t$, its gradient and Hessian with 
that of the Heat kernel on the Euclidean space and 
one wishes to obtain information on $ t\mathrm{Hess}\log p(t,x,y)+\mathrm{Hess}\left(\frac{d^2(x,y)}{2}\right)$.
Problem (1) is also closely related to the Li-Yau inequality and the extensive literature it has generated
in Geometric Analysis \cite{Li-Yau}.


\medskip

From the above considerations it seems natural on the one hand to investigate on log-semi-convexity and deviation bounds for log-semi-convex functions (Problems (1) and (2)) that are of independent interest, and on the other hand to explore their connections with Talagrand's regularization 
effect in $\mathbb{L}^1$. Before entering into a detailed description of the content of the paper, we can already quote that we will give non-trivial results essentially concerning log-semi-convexity (Problem (1)); obtaining dimension free estimates for Problem (2) remains an open (and we believe interesting) question that we do not address here. Our results also 
indicate that the approach by Eldan-Lee and Lehec (consisting on combining (1) and (2)) to prove the Talagrand Conjecture may fail in other settings, which, we believe, has it own merit. Besides, this emphasizes the fact that finding an alternative proof of the conjecture, even in the setting of the Gaussian measure and the Ornstein-Uhlenbeck process, would be of 
interest for possible generalizations.

\medskip

The content of the paper goes as follows.

\smallskip

In Section~\ref{sec:continuous}, we  investigate log-semi-convexity for general diffusion semi-groups $(P_s)_{s\geq0}$, in any dimension.
In fact, using the Feynman-Kac formula, we are able to give an explicit representation for $\mathrm{Hess}(\log P_s g)$ that leads, 
under some assumptions, to a similar bound {as} in the log-semi-convexity 
property of Problem $(1)$.
We also investigate  deviation for  log-semi-convex functions for diffusions, in dimension 1 only, of the form $L=\frac{d^2}{dx^2} - h'\frac{d}{dx}$ 
(which corresponds to the Ornstein-Uhlenbeck operator for the choice $h(x)=\frac{1}{2}x^2$),
when $0< c \leq h'' \leq C$. Our results might therefore  be seen as perturbations (though potentially unbounded) of the Ornstein-Uhlenbeck setting.
Then we apply the approach developed in \cite{GMRS17} to prove Talagrand's regularization effect for such diffusions in dimension 1.

\smallskip

In {Section~\ref{sec:mmi}}, we investigate log-semi-convexity and deviation for log-semi-convex functions (\textit{i.e.}\ Problem $(1)$ and $(2)$) 
in the discrete setting of the $M/M/\infty$ queuing process on the integers. 
We will prove that, in that setting, a result similar to that of Problem $(1)$ still holds. On the other hand it appears that the picture is 
very different for Problem $(2)$ { in the discrete setting}. In fact, if $g$ is { ``log-convex''}, in the sense that 
$\Delta \log g \geq 0$, where $\Delta$ is a discrete analogue of the Laplacian (see Section \ref{sec:mmi} for the definition), 
then a deviation bound similar to Problem (2) holds. In contrast, we will 
construct counterexamples of the result of Problem $(2)$ for $g$ 
satisfying $\Delta \log g = - \beta$, with $\beta >0$. The first property {transfers} to Talagrand's regularization effect for the $M/M/\infty$ 
queuing process. More precisely, if $g$ is ``log-convex'' (in the discrete sense), then the strategy developed in  \cite{GMRS17} (strategy of the uniform bound for log-convex functions) leads to a 
positive conclusion regarding the regularization effect but \emph{restricted} to convex functions.
However,  as shown in Section \ref{sec:tal-M/M/infty}, the strategy of the uniform bound on $P_t$ presented above appears to be powerful in the 
case of the $M/M/\infty$ semi-group on the integers and will allow us to (fully) prove the regularization effect in this setting.

\smallskip

It should be noticed here that the strategy of the  uniform bound on $P_t$ holds in the case of the Ornstein-Uhlenbeck semi-group in dimension 1 \cite{BBBOW13}, 
but does not seem to apply to the perturbations of the Ornstein-Uhlenbeck 
considered in this paper.
Therefore the situation is very different between the continuous and the discrete setting, and somehow in opposition
 (at least for the $M/M/\infty$ queuing process and the family of diffusion semi-groups we consider): 
 the strategy of the uniform bound on $P_t$ works in the discrete, but not in the continuous; 
 in contrast, the strategy consisting of combining log-semi-convexity and 
deviation bounds for log-semi-convex functions (\textit{i.e.}\ $(1)$ and $(2)$ above) works in the continuous, but fails in the discrete setting. 
 As pointed out by the referee this could be a consequence of our choice of the discrete operator $\Delta$. In fact, in discrete settings, there often exist
different natural definitions of the objects under consideration, each of 
them with its own interests and advantages. It could be that, for a different choice of generator, log-semi-convexity would imply some regularization effect. It would worth exploring such a direction. 

\smallskip

In fact, as will be shown in Section~\ref{sec:Laguerre} by considering yet another class of semi-groups
(namely, the Laguerre semi-groups on $(0,\infty)$), the picture can be different from the two previous ones. Namely we will show 
that neither the  log-semi-convexity property, nor the deviation for log-semi-convexity functions property holds for the Laguerre semi-group, but the analogue of Talagrand's regularization effect in $\mathbb{L}^1$ still 
holds.

\smallskip

We may summarize the different situations in the following diagram (in the present paper we investigate and prove results in the last three columns):
\par\vspace{.2in}

\begin{tabular}{|c|c|c|c|c|}
\hline
Semi-group: & Ornst.-Uhl. & $0 < c \leq h'' \leq C$ & $M/M/\infty$ & Laguerre \\
\hline
Pb (1): Lower  &  & Yes (under some  &  &  \\
bound on &  Yes & assumptions & Yes &  No     \\
  $(\log P_tf)''$ &   & on $h$) & & \\
\hline
Pb (2): Deviation  &  & Yes (under some &  &  \\
bounds for semi- & Yes & assumptions &   No ($\beta >0$)  & No ($\beta>0$) \\
log-convex functions & & on $h$)  & Yes ($\beta=0$)   &   			\\
 ($(\log f)''\geq -\beta$) & & &  & \\
\hline
Regularization & (1) + (2)  &  (1) + (2) & unif. bound  & unif. bound\\
effect in $\mathbb{L}^1$  & \cite{EL18,Leh16,GMRS17} or & $\frac{1}{t \sqrt{\log t}}$ & $\frac{\sqrt{\log \log t}}{t \sqrt{\log t}}$ & $\frac{1}{t 
\sqrt{\log t}}$
 \\
dim $n=1$  & unif. bound \cite{BBBOW13} & & & \\
\hline 
Talagrand's & (1) + (2)  &  & & \\
conjecture & \cite{EL18,Leh16}   & unknown& unknown & unknown\\
 dim $n>1$ & & &  & \\
\hline 
\end{tabular}


\bigskip

Once more, we emphasize that although the results of this paper in regards to the validity of Talagrand's conjecture are limited  
to certain semi-groups in one dimension, one may hope that our exposure of the variety of situations that may occur
illustrate to some extent both the potential robustness of the underlying 
phenomenon of smoothening of integrable functions, 
as well as the non-robustness of proof techniques to demonstrate the same. We also note that the smoothening
effect of Markov semi-groups, together with their ability (when ergodic) to interpolate between arbitrary starting points and the 
invariant measure, has been used an innumerable number of times as a proof tool, such as in the proof of 
functional or entropy inequalities (see, e.g., \cite{Bar86, ABBN04:1, MB06:isit, Bor03, Joh07, BCCT08, JKM08:maxent, BH09});
one may hope that the results of this paper are of some use in such investigations.

\section{Diffusion semi-groups} 
\label{sec:continuous}

In this section, we derive an explicit formula for the Hessian (second-order space derivative) of $\log P_t f$ for general diffusion semi-groups
 which is the technical heart of our results for such semi-groups. The formula in given in Theorem \ref{thm:loghessian} below. Its proof being rather technical it is postponed to Section~\ref{ss:log-hess}.

For the reader convenience, we may start with a concrete application of such a formula and its consequence in terms of regularization effect in $\mathbb{L}^1$
for some class of diffusion, in dimension 1.

In order to present our results, we need some notation:



\begin{itemize}
\item $\mathcal{C}_K^\infty$ denotes the set of $\mathcal{C}^\infty$ real 
valued functions with compact support.
\item $\mathcal{D}^{(n)}$, $n =0,1,...$, denotes the set of $\mathcal{C}^n$ real valued functions whose derivatives and the function itself have 
polynomial growth.
\item The outer product of two vectors of $\mathbb{R}^n$, $x=(x_1,\dots,x_n)$, $y=(y_1,\dots,y_n)$
is defined as usual as $u \otimes v := uv^T$, \textit{i.e.}\ $u \otimes 
v$ is the $n \times n$ matrix with entries $(u \otimes v)_{ij}=u_iv_j$, 
$1 \leq i,j \leq n$. 
\end{itemize}

The log-Hessian formula of Theorem \ref{thm:loghessian} below leads to the following useful and tractable result (related to log-semi-convexity (Problem (1)) in the introduction) in dimension 1. Its proof is clear from Theorem \ref{thm:loghessian} and the fact that
$\int_0^t \left(\frac{\sinh(t-s)}{\sinh(t)} \right)^2 ds = \frac{e^{2t}- e^{-2t} - 4t}{2(e^{2t}+e^{-2t}-2)} \leq \frac{1}{2}$ for $t \geq 0$.

\begin{prop}[log-semi-convexity] \label{prop:loghessian}
Let $h \colon \mathbb{R} \to \mathbb{R}$ be of class $\mathcal{D}^{(4)}$ with  $\mu_h(dx):=e^{-h(x)}dx$  a finite probability measure. 
Set  $V(x):= \frac{1}{2}(1-h''(x)) - \frac{1}{4}(x^2-{h'}^2(x))$, $x \in \mathbb{R}$ and assume that
$V$ is bounded from below. 
to the diffusion operator $L_h:=\frac{\partial^2}{\partial x^2} - h'(x) 
\frac{\partial}{\partial x}$.
Then, given $f \in \mathbb{L}^2(\mu_h)$ non-negative, for all $x \in \mathbb{R}$ and $t \geq 0$,  one has
\begin{equation} \label{prop:froid}
(\log \mathcal{P}_t f)'' (x) \geq  -c_t^2 - \frac{1}{2}(1-h''(x)) - \frac{1}{2} \sup_{y \in \mathbb{R}} V''(y) 
\end{equation}
where  $(\mathcal{P}_t)_{t \geq 0}$ is the semi-group associated to the diffusion operator $L_hf:= f'' - h'f'$.
\end{prop}
 
As an example of application, one can consider $h(x)=\frac{x^2}{2} + (1+x^2)^{p/2}$, with $p \leq 2$. Then it is easy to see that $h$ satisfies the hypotheses of the latter and that $- \frac{1}{2}(1-h''(x)) - \frac{1}{2} \sup_{y \in \mathbb{R}} V''(y) \geq -c_p$ for some constant $c_p$ depending only on $p$.

\medskip

As already mentioned, the lower bound on $(\log \mathcal{P}_t f)''$ of Proposition \ref{prop:loghessian} is a consequence of a much more general result (exact formula, any dimension) that we now present. The idea behind 
its proof is to use two perturbation arguments. The first one is based on 
the so-called Feynman-Kac formula that allows one to represent the semi-group of the (perturbed) operator $L^V=L - V$ ($V$ acting multiplicatively) in term of the process associated to $L$ (in our case, $L=L^{ou}$ is the Ornstein-Uhlenbeck operator). This leads to an explicit representation for the Hessian of $\log P_t^V$ (Theorem \ref{loghessian}).
Then, by means of a $h$-transform, one can perturb $L^V$ again to reach the desired diffusion $\Delta - \nabla h \cdot \nabla$ and the following theorem which is one of our main results (a more complete version can be found in Corollary \ref{cor:loghessian}).

\begin{thm} \label{thm:loghessian}
Let $h \colon \mathbb{R}^n \to \mathbb{R}$ belongs to $\mathcal{D}^{(4)}$. Set $\mu_h(dx)=e^{-h(x)}dx$. Denote by $(\mathcal{P}_t)_{t \geq 0}$ 
the semi-group associated to the diffusion operator $\Delta - \nabla h \cdot \nabla$.
Put
$$
W(x):=\frac{|x|^2}{2} - h(x), \quad \qquad 
V(x):= \frac{1}{2}(n-\Delta h) - \frac{1}{4}(|x|^2-|\nabla h|^2), \quad 
x \in \mathbb{R}^n.
$$ 
Assume that $V$ is bounded from below. Let $f \in \mathbb{L}^2(\mu_h) \setminus \{0\}$ be non negative and, for all $x \in \R^n$, denote by $\E_{f,x}^W$ the expectation with respect to  the probability measure $\mathbb{Q}_{e^{\frac W 2} f, x}$ introduced in Definition \ref{def:Q}.
at $x$. 
Then
\begin{align}\label{thm:froid}
\mathrm{Hess}(\log \mathcal{P}_t f)(x) 
& = 
\quad - \frac{1}{2}(\mathrm{Id}-\mathrm{Hess}(h)(x))  + \mathbb{E}_{f,x}^W (A_t^x\otimes A_t^x) 
- \mathbb{E}_{f,x}^W (A_t^x)\otimes  \mathbb{E}_{f,x}^W (A_t^x)  \nonumber \\
& \quad  -c_t^2\mathrm{Id} - \int_0^t \left( \frac{\sinh (t-s)}{\sinh(t)}\right)^2 \mathbb{E}_{f,x}^W (  \Hess V(X_s^x)) ds
\end{align}  
where
\[
A_t^x:=- \int_0^t \nabla V(X_s^x)\frac{\sinh (t-s)}{\sinh(t)}\, ds 
+\f{e^{-t}} {1-e^{-2t}} (X_t^x-e^{-t}x)
\]
and $c_t$ is given by \eqref{eq:ct}.
\end{thm}
 
The proof is given in the next section (see the proof of Corollary \ref{cor:loghessian}).

Heuristically, one can see that $W$ is devised to transform the Gaussian measure into the measure $e^{-h}$. On the other hand, in 
\eqref{thm:froid}, the first term $\mathrm{Id}-\mathrm{Hess}(h)(x)$ measures the discrepancy of $h$ with respect to $|x|^2/2$ while  the second term is some sort of (co)variance term, that one can hope to be positive. Obviously, $h= \frac{|x|^2}{2}$ leads to $W=V=0$ and the Ornstein-Uhlenbeck semi-group.

\medskip

Proposition \ref{prop:loghessian} above is concerned with log-semi-convexity. In the next section we derive, in dimension 1, some deviation bounds 
for log-semi-convex functions and in turn prove a regularization effect in $\mathbb{L}^1$ for a class of diffusion. 
Finally, in Section \ref{ss:log-hess} we prove Theorem \ref{thm:loghessian}.

%
%
%
%

\subsection{Deviation bounds for log-semi-convex functions}
\label{ss:diff-slc}

The aim of this section is to prove a deviation bound for log-semi-convex 
functions, following \cite{GMRS17}. Namely, the following holds.

\begin{thm} \label{bis}
Let $\mu_h$ be a probability measure on $\R$ of the form $d\mu_h(x) = e^{-h(x)}\,dx$ with $h \colon \mathbb{R} \to \mathbb{R}$ a symmetric $\mathcal{C}^2$ function. Assume that there exist $c,C >0$ such that $c \leq  h'' \leq C$.
Then, for any  $\mathcal{C}^2$ function $f \colon \mathbb{R} \to (0,\infty)$  such that $(\log f)''\geq - \beta$ for some $\beta \geq 0$, it holds
\[
\mu_h \Big(\left \{ f \geq t \int f\,d\mu_h \right\}\Big) \leq  \Big(\frac{C+\beta}{c}\Big) \; \frac{1}{t \sqrt{\log t}} , \qquad \qquad \forall t 
\geq 2 .
\]
\end{thm}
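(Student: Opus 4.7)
My plan is to adapt the strategy of \cite{GMRS17}, developed for the Ornstein--Uhlenbeck (Gaussian) case, to the class of measures $\mu_h$ with $c\leq h''\leq C$. By homogeneity I may normalize $\int f\,d\mu_h=1$, and I write $u=\log f$, $Z_h=\int e^{-h}\,dx$, $A_t=\{f\geq t\}$.

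The first step is to derive a pointwise envelope bound. The semi-convexity $u''\geq-\beta$ gives $\log f(y)\geq u(x)+u'(x)(y-x)-\tfrac{\beta}{2}(y-x)^2$, while $h''\leq C$ gives $h(y)\leq h(x)+h'(x)(y-x)+\tfrac{C}{2}(y-x)^2$. Multiplying the resulting lower bound on $f(y)e^{-h(y)}$ and integrating over $y\in\mathbb{R}$ against Lebesgue measure, the left side equals $Z_h\int f\,d\mu_h=Z_h$, while the right side reduces to a shifted Gaussian integral $\int e^{\alpha v-(\beta+C)v^2/2}\,dv=\sqrt{2\pi/(\beta+C)}\,e^{\alpha^2/(2(\beta+C))}$ with $\alpha=u'(x)-h'(x)$. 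This yields
\[
\frac{f(x)e^{-h(x)}}{Z_h}\leq \sqrt{\tfrac{\beta+C}{2\pi}}\,\exp\!\Big(-\tfrac{(u'(x)-h'(x))^2}{2(\beta+C)}\Big),\qquad x\in\mathbb{R},
\]
and on $A_t$, where $f(x)\geq t$, it provides in particular the density bound $e^{-h(x)}/Z_h \leq (1/t)\sqrt{(\beta+C)/(2\pi)}\,\exp(-(u'(x)-h'(x))^2/(2(\beta+C)))$.

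The second step converts this envelope into the tail bound using the concentration of $\mu_h$ from $h''\geq c$ together with the symmetry of $h$. Since $h$ is symmetric and $\mathcal{C}^2$, it attains its minimum at $0$ with $h'(0)=0$, and $h''\geq c$ yields $h(x)\geq h(0)+cx^2/2$, whence the Gaussian-type tail $\mu_h(\{|x|\geq r\})\lesssim (r\sqrt c)^{-1}e^{-cr^2/2}$. Exploiting the envelope jointly with the convexity of $u+\tfrac{\beta}{2}x^2$, a case analysis depending on the size of $u'(x)-h'(x)$ relative to $h'(x)\sim cx$ shows that points of $A_t$ must satisfy $|x|\gtrsim\sqrt{(2/c)\log t}$ up to lower-order terms. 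Inserting this lower bound into the concentration estimate for $\mu_h$ and keeping track of the constants then produces $\mu_h(A_t)\leq \tfrac{C+\beta}{c}\cdot\tfrac{1}{t\sqrt{\log t}}$ for $t\geq 2$.

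The hardest step is this final geometric control of $A_t$. A crude use of the envelope combined only with the upper bound $h''\leq C$ yields merely $|x|\gtrsim\sqrt{(2/C)\log t}$ on $A_t$, which gives the weaker tail $t^{-c/C}/\sqrt{\log t}$ whenever $c<C$. Recovering the sharp $1/t$ factor and the precise prefactor $(C+\beta)/c$ demands a careful balancing of the three Hessian conditions: $(\log f)''\geq -\beta$ and $h''\leq C$ together control the envelope exponent, while $h''\geq c$ governs the decay rate of $\mu_h$. The exponential factor $\exp(-(u'-h')^2/(2(\beta+C)))$ in the envelope must be used to upgrade the $1/C$ which appears naively to the correct $1/c$ in the concentration exponent --- this matching is the technical heart of the argument of \cite{GMRS17}, and what needs to be carried out in the present more general setting.
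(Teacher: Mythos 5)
Your first step is essentially the paper's Lemma~\ref{lem:technical}: drop the Gaussian factor $\exp\bigl(-(u'-h')^2/(2(\beta+C))\bigr)\le 1$ and use $Z_h=1$, and your envelope becomes $\log f(x)-h(x)\le \tfrac12\log\bigl(\tfrac{C+\beta}{2\pi}\bigr)$ after normalizing $\int f\,d\mu_h=1$, which is exactly the paper's statement. The paper proves it by Fenchel--Legendre duality applied to the convex function $g=\log f-h+(C+\beta)\tfrac{x^2}{2}$; your tangent-plus-parabola argument is the same idea in different dress. So up to and including the envelope, the two arguments coincide.

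The gap is in your second step, which you only sketch, and the specific plan you describe does not hold up. You claim that a case analysis on $u'(x)-h'(x)$ will show $|x|\gtrsim\sqrt{(2/c)\log t}$ on $A_t$, but the envelope together with $h\le h(0)+\tfrac{C}{2}x^2$ only delivers $|x|\gtrsim\sqrt{(2/C)\log t}$, and the gradient term in the envelope tightens the constraint on $h(x)$ the wrong way (it only strengthens $h(x)+\tfrac{(u'-h')^2}{2(\beta+C)}\ge\log t-a$, hence weakens the lower bound on $h(x)$ alone), so it cannot upgrade $C$ to $c$ under the square root. No such upgrade is needed: the paper never lower-bounds $|x|$ at all. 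From $\{f\ge t\}\subset\{h\ge\log t-a\}$ it directly estimates the tail of $\mu_h$ over the sublevel set of $h$ using $\int_s^\infty e^{-h}\,dx\le e^{-h(s)}/h'(s)$ (valid because $h'$ is increasing on $[0,\infty)$); at $s=h^{-1}(\log t-a)$ the exponential factor is $e^{-(\log t-a)}=e^a/t$ with no $c/C$ leakage, and $c,C$ enter only through $h'(h^{-1}(\log t-a))\ge c\sqrt{2(\log t-a-h(0))/C}\gtrsim\sqrt{\log t}$, supplying the denominator. Thus the ``technical heart'' you anticipate is not where the difficulty lies; the cruder envelope already suffices once the tail of $\mu_h$ is bounded via the Laplace estimate on $\{h\ge s\}$ rather than on $\{|x|\ge r\}$. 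As written, your proposal correctly identifies the first half but leaves the second half unproved, resting on an assertion ($|x|\gtrsim\sqrt{(2/c)\log t}$) that does not follow from your stated ingredients.
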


\begin{rem}
The assumption $h$ symmetric is here for simplicity. A similar statement would hold with $h$ non symmetric. The special case $h(x)=x^2/2$ is given in \cite{GMRS17} with a factor $(1+\beta)/\sqrt{2}$ which is slightly better than $1+\beta$ (since $c=C=1$ when $h(x)=x^2/2$).
\end{rem}

The proof of Theorem \ref{bis} relies on the following technical lemma whose proof can be found at the end of this section.

\begin{lem} \label{lem:technical}
Let $\mu_h$ be a probability measure on $\R$ of the form $d\mu_h(x) = e^{-h(x)}\,dx$ with $h \colon \mathbb{R} \to \mathbb{R}$ a symmetric $\mathcal{C}^2$  function.
Assume that there exists $C >0$ such that $0\leq  h'' \leq C$.
Then, for any $\varphi \colon \mathbb{R} \to \mathbb{R}$ of class $\mathcal{C}^2$ such that $\varphi''\geq - \beta$ for some $\beta \geq 0$, it holds
\[
\varphi(x) - \log \left( \int e^\varphi\,d\mu_h\right) \leq \frac{1}{2}\log\left(\frac{C+\beta}{2\pi}\right) + h(x),\qquad \qquad \forall x \in \R.
\]
\end{lem}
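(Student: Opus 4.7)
The plan is to recast the claimed inequality, after exponentiation, as the lower bound
\[
\int e^{\varphi(y)-h(y)}\,dy \;\geq\; \sqrt{\frac{2\pi}{C+\beta}}\; e^{\varphi(x)-h(x)},
\qquad \forall x\in\R,
\]
and then prove this bound by exploiting the semi-convexity of $g:=\varphi-h$.

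First, I would observe that under the hypotheses, $g''=\varphi''-h''\geq -\beta-C$, so $g$ is $(\beta+C)$-semi-convex. Integrating the pointwise bound $g''(r)\geq -(\beta+C)$ twice (once from $x$ to $r$, once from $x$ to $y$) yields the one-dimensional semi-convex Taylor-type inequality
\[
g(y) \;\geq\; g(x) + g'(x)(y-x) - \frac{\beta+C}{2}(y-x)^2,
\qquad \forall x,y\in\R.
\]
This is the key inequality and the only place where both assumptions ($\varphi''\geq -\beta$ and $h''\leq C$) enter.

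Next, I would exponentiate and integrate in $y$ over $\R$. The right-hand side is a shifted Gaussian, so
\[
\int e^{g(y)}\,dy \;\geq\; e^{g(x)} \int \exp\!\left( g'(x)(y-x) - \frac{\beta+C}{2}(y-x)^2 \right) dy
\;=\; e^{g(x)} \sqrt{\frac{2\pi}{C+\beta}}\, \exp\!\left(\frac{g'(x)^2}{2(\beta+C)}\right),
\]
and dropping the last non-negative factor gives exactly the desired lower bound. Taking logarithms and rearranging produces the lemma.

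The step I expect to be the only real content is the semi-convex Taylor bound in the first display; everything else is algebraic manipulation or the completion-of-the-square Gaussian computation. Note that neither the symmetry of $h$ nor the lower bound $h''\geq 0$ are used in this argument --- only $h''\leq C$ enters --- so these appear to be present for use in Theorem~\ref{bis} rather than in the lemma itself, a point worth flagging.
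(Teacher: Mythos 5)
Your proof is correct. You and the paper both exploit the same fact — that $g:=\varphi-h$ is $(C+\beta)$-semi-convex — but you implement it differently. The paper passes to the convex function $g+\frac{C+\beta}{2}|\cdot|^2$ and invokes Fenchel–Legendre duality: it lower-bounds the conjugate $g^*$ by a Gaussian-integral computation and then recovers an upper bound on $g$ from $g=(g^*)^*$. You instead apply the tangent-line inequality (in its semi-convex, second-order form) directly to $g$, exponentiate, and integrate the resulting shifted Gaussian, dropping the nonnegative term $e^{g'(x)^2/2(C+\beta)}$. The two arguments are duals of one another and cost the same Gaussian integral; yours is slightly more elementary in that it never names the Legendre transform, and it dispenses with the normalization reduction $\int e^\varphi\,d\mu_h=1$ that the paper makes at the outset (your lower bound on $\int e^{\varphi-h}\,dy$ is unconditional, covering the $+\infty$ case automatically). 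Your observation that neither the symmetry of $h$ nor the lower bound $h''\geq 0$ is used in the lemma is also accurate — the paper's proof likewise uses only $h''\leq C$; those extra hypotheses are consumed in the proof of Theorem~\ref{bis}.
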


\begin{proof}[Proof of Theorem \ref{bis}]
Set $\varphi = \log f$, which satisfies $\varphi'' \geq -\beta$. Without loss of generality one can assume that $\int e^\varphi\,d\mu_h =1$. Define $a = \frac{1}{2}\log \left(\frac{C+\beta}{2\pi}\right)$. From Lemma \ref{lem:technical} and by symmetry of $h$ we have, for all $t > 2(a+h(0))$
\[
\mu_h(\{\varphi \geq t \})  \leq 
\mu_h( \{ h(x) \geq t - a \}) 
\leq 
2 \int_{h^{-1}(t - a)}^\infty e^{-h(x)}dx  \leq  
\frac{2 e^ae^{-t}}{h'(h^{-1}(t - a)))}
\]
where we used the following bound, valid for any $s>0$ (recall that $h'$ is increasing on $\mathbb{R}^+$)
\[
\int_s^\infty e^{-h(x)} dx \leq \int_s^\infty \frac{h'(x)}{h'(s)}e^{-h(x)} dx = \frac{e^{-h(s)}}{h'(s)} . 
\]
Now observe that, since $h$ is smooth and symmetric, $h'(0)=0$ so that
$h(x) \leq h(0) + \frac{1}{2}Cx^2$ and $h'(x) \geq cx$, $x \geq 0$.
Therefore
\[
h'(h^{-1}(x)) \geq h'\left(\sqrt{\frac{2(x-h(0))}{C}}\right) \geq c\sqrt{\frac{2(x-h(0))}{C}} \qquad \mbox{for any } x \geq h(0).
\]
In turn, since we fixed $t \geq 2(a+h(0))$, $2\left((t - a) - h(0)\right)\geq t$ and
thus, thanks to the latter
\[
h'(h^{-1}(t -a))) \geq c\sqrt{\frac{2\left((t - a) - h(0)\right)}{C}} \geq c\sqrt{\frac{t}{C}}.
\] 
We  conclude that, for any $t \geq2(a+h(0))$, 
\[
\mu_h(\{\varphi \geq t \}) \leq  \frac{2\sqrt{C}e^a}{c} \frac{e^{-t}}{\sqrt{t}}
\leq 2 \frac{C+\beta}{c\sqrt{2\pi}} \frac{e^{-t}}{\sqrt{t}} .
\]
Next we deal with $ t \in (0, 2(a+h(0)))$.  Using Markov's inequality, since $\int e^\varphi d\mu_h=1$, we have
\[
\mu_h(\{\varphi \geq t \}) \leq e^{-t} \leq \sqrt{2(a+h(0))} \frac{e^{-t}}{\sqrt{t}}
 .
\]
Since $\int e^{-h}\,dx=1$ and $h(0) + c\frac{x^2}{2} \leq h(x)$, we have 
$2h(0) \leq \log \frac{2 \pi}{c}$ so that
\begin{align*}
\sqrt{2a+2h(0)}
& \leq 
\sqrt{\log ((C+\beta)/c)} \leq \frac{C+\beta}{c}
\end{align*}
where the last inequality follows from a direct computation. 
\end{proof}

\begin{proof}[Proof of Lemma \ref{lem:technical}]
We follow \cite[Lemma 2.1]{GMRS17}. The bound is trivial if $\int e^\varphi\,d\mu_h = +\infty$ so let us assume that $\int e^\varphi d\mu_h=1$.
Define $g(x) = \varphi(x) - h(x) + \alpha\frac{x^2}{2}$, $x\in \R$, with $\alpha = C+\beta$. The function $g$ is convex on $\R$ and so, by Fenchel-Legendre duality, it holds
$g(x)= \sup_{y \in \mathbb{R}} \left\{ xy - g^*(y) \right\}$, $x\in \R$, where 
$g^*(y):= \sup_{x \in \mathbb{R}} \left\{ yx - g(x) \right\}$, $y\in \R$, is the convex conjugate of $g$. 
Therefore, for all $y \in \R$, 
\[
1 = \int e^{\varphi(x)-h(x)}\,dx = \int e^{g(x)- \alpha \frac{x^2}{2}}\,dx \geq e^{-g^*(y)} \int e^{xy- \alpha \frac{x^2}{2}}\,dx = e^{-g^*(y)} \sqrt{\frac{2\pi}{\alpha}}e^{\frac{y^2}{2\alpha}}.
\]
So $g^*(y) \geq \frac{1}{2} \log \left(\frac{2\pi}{\alpha}\right) + \frac{y^2}{2\alpha}$, for all $y\in \R.$ Therefore,
\[
g(x) \leq \frac{1}{2} \log \left(\frac{\alpha}{2\pi}\right) + \sup_{y\in \R} \left\{xy-\frac{y^2}{2\alpha}\right\} =  \frac{1}{2} \log \left(\frac{\alpha}{2\pi}\right)  + \alpha\frac{x^2}{2},
\]
which proves the claim.
\end{proof}

.

\subsection{Regularization effect in $\mathbb{L}^1$ for a class of diffusion, in dimension 1}
\label{ss:diff-tal}

In this section we prove that for some class of potentials $h$, the associated diffusion semi-group satisfies the Talagrand Regularization effect, 
in dimension 1. 
Theorem \ref{semifinal} below is a corollary of the results of the previous section and Proposition \ref{prop:loghessian}.

\begin{thm} \label{semifinal}
Let $\mu_h$ be a probability measure on $\R$ of the form $d\mu_h(x) = e^{-h(x)}\,dx$ with $h \colon \mathbb{R} \to \mathbb{R}$ a symmetric function of class $\mathcal{D}^{(4)}$ such that $c \leq  h'' \leq C$ where $c, 
C$ are positive numbers. 
Set $V(x):= \frac{1}{2}(1-h'') - \frac{1}{4}(x^2-{h'}^2)$. Assume $V$   
is bounded below, with $\sup_{x \geq 0}V''(x) < \infty$. Finally denote by $(\mathcal{P}_t)_{t \geq 0}$ the semi-group associated to the diffusion 
operator $L_h:=\frac{\partial^2}{\partial x^2} - h'(x) \frac{\partial}{\partial x}$ symmetric in $\mathbb{L}^2(\mu_h)$. 

Then, for all $s>0$, there exists a constant $D$ (that depends only on $s$, $c$, $C$ and $\sup_{x \geq 0}V''(x)$) 
such that for all non-negative $g  \in \mathbb{L}^1(\mu_h)$
$$
\mu_h ( \{\mathcal{P}_s g \geq t \int g d\mu_h\} ) \leq  D \frac{1}{t\sqrt{\log t}} \qquad \qquad \forall t \geq 2 .
$$
\end{thm}

\begin{ex}
As an example of application, one can consider $h(x)=\frac{x^2}{2} + (1+x^2)^{p/2}$, with $p \leq 2$ which satisfies the assumption of the Theorem. Note that this example corresponds to an unbounded perturbation of the Gaussian potential.

Many bounded perturbations of the Gaussian potential also enter the framework of the above theorem. However, due to the assumption
$V$ bounded below, even apparently very tiny perturbation of the Gaussian 
potential does not enter the framework of the theorem, as for example $h(x)=\frac{x^2}{2} + \cos (x)$! We believe that the reason is technical and that both the regularization effect and the Talagrand's conjecture should hold also in this case.
\end{ex}

\begin{proof}
Fix $g  \in \mathbb{L}^2(\mu_h)$ positive and $s >0$.
Thanks to Proposition \ref{prop:loghessian}, 
\[
(\log \mathcal{P}_s g)'' \geq -c_s^2 -\frac{1}{2}(1 -c) - \frac{1}{2} \|V''\|_\infty \geq - \beta 
\]
with $\beta := \max(0,c_s^2 +\frac{1}{2}(1 -c) + \frac{1}{2} \|V''\|_\infty) \geq 0$.
Therefore, by Theorem \ref{bis} applied to $f=\mathcal{P}_s g$, one can 
conclude that,
for all $t\geq 2$,
\[
\mu_h ( \{ \mathcal{P}_s g \geq t \int g\,d\mu_h\})  \leq \frac{C+\beta}{c} \frac{1}{t\sqrt{\log t}}
\]
which is the desired conclusion for $g \in \mathbb{L}^2(\mu_h)$. Applying 
the previous bound to $g\wedge n$, $n\geq 1$, for non-negative $g\in\mathbb{L}^1(\mu_h)$ and letting $n\to \infty$ completes the proof.
\end{proof}

\subsection{Warm up: bounds on the Ornstein-Uhlenbeck semi-group in dimension 1} 
\label{ss:OU}

In this section we deal with the dimension 1 for simplicity, and set $\gamma:=\gamma_1$ with density 
$\varphi(x)=(2 \pi)^{-1/2}e^{-x^2/2}$, $x \in \mathbb{R}$. Put
\[
H_n(x):=e^{x^2/2} (-1)^n \frac{d^n}{dx^n} \left( e^{-x^2/2}\right) 
\]
for the Hermite polynomial of degree $n=0,1,\dots$, with the convention 
that $H_0 \equiv 1$. It is well known that the family of Hermite polynomial is an orthonormal basis of $\mathbb{L}^2(\gamma)$. Simple computations 
lead to
$H_1(x)  =x$, $H_2(x)  =x^2-1$, $H_3(x)  = x^3-3x$ etc.
Now, by a direct induction argument, the following identities hold:
\[
(P_t^{\text{ou}}g)^{(n)}(x) = c_t^n  \int g\left(e^{-t}x + \sqrt{1-e^{-2t}} y\right) H_n(y) d\gamma(y),\qquad\qquad n\in \N,
\]
with $c_t$ defined by \eqref{eq:ct}.
Fix a positive integrable function $g$ and, for any $x \in \R$, denote by 
$\mathbb{E}_x$ the expectation with respect to the probability measure 
with density 
\[
y \mapsto g\left(e^{-t}x + \sqrt{1-e^{-2t}}y\right)  /  \int g\left(e^{-t}x + \sqrt{1-e^{-2t}}y\right) d\gamma(y)
\]
with respect to the Gaussian measure $\gamma$.
The above identities then read
\[
d_n(x):=\frac{(P_t^{\text{ou}}g)^{(n)} (x)}{P_t^{\text{ou}}g(x)} = c_t^n \mathbb{E}_x(H_n(Y)),\qquad \qquad x \in \R, n \in  \N.
\]

Our next step is to explore the first derivatives of $x \mapsto u_t(x):=\log P_t^{\text{ou}}g (x)$. Letting for simplicity $g_t(x):=P_t^{\text{ou}}g(x)$, 
we get after simple algebra 
\begin{align*}
u_t' (x)&= \frac{g_t'}{g_t}(x) = d_1(x) = c_t \mathbb{E}_x[H_1(Y)] = c_t \E_x[Y] \\
u_t''(x) &= \frac{g_t''}{g_t}(x) - \left(\frac{g_t'}{g_t}\right)^2(x) = 
d_2(x)-d_1^2(x) = c_t^2 \left( \mathbb{E}_x[H_2(Y)] - \mathbb{E}_x[H_1(Y)]^2 \right) \\
& = 
c_t^2 \left(-1+ \mu_2 (x)\right) ,
\end{align*}
where $\mu_2(x) = \E_x[Y^2] - \E_{x}[Y]^2 \geq0$.
In particular, 
\[
(\log P_t^{\text{ou}}g)'' (x) = u_t''(x) \geq -c_t^2 
\]
which corresponds to the log-semi-convexity property (Problem (1)) in the 
Introduction.

\subsection{Representation for the Hessian of perturbed Ornstein-Uhlenbeck semi-groups} 
\label{ss:log-hess}
In this section, we give an explicit formula for the Hessian of $\log P_t$ for a wide class of diffusion operators.
We need to introduce some additional notation. For $a,\sigma> 0$, consider the general Ornstein-Uhlenbeck operator $L^{\text{ou}}_{\sigma,a}$ on $\R^n$
 \[
 L^{\text{ou}}_{\sigma,a}=\frac{1}{2} \sigma^2\Delta - ax \cdot \nabla,
 \]
where the dot stands for the scalar product. Observe that the Ornstein-Uhlenbeck operator given in the introduction corresponds to $\sigma=\sqrt{2}$ and $a=1$. In what follows, we will write $L^{\text{ou}}$ instead of $L^{\text{ou}}_{\sigma,a}$ in order not to overload the notation.
 Let $(B_t)_{t\geq0}$ be a standard Brownian motion on $\R^n$ on a (filtered) probability space $(\Omega,\P)$ which we fix.
 For any $x\in \R^n$ let $(X_s^x)_{s \geq 0}$ be the (unique strong) solution to
\[
X_t^x=x+\sigma  B_t-a \int_0^t X_s^x ds.
\]
This is the so-called Ornstein-Uhlenbeck process (with parameters $a,\sigma$) starting at  $x$ ;  its infinitesimal generator is $L^{\text{ou}}$. For any $t>0$, the law of $X_t^x$ will be denoted by $\gamma_t^x$ and is given by the (general) Mehler formula
\[
d\gamma_t^x(y) = \frac{1}{Z_t}M_t(x,y) \,dy
\]
with 
\[
M_t(x,y)=M_t^{\sigma,a}(x,y)=\exp\left(-\f {a|y-e^{-at} x|^2} {\sigma^2(1-e^{-2at})}  \right),\qquad y \in \R^n,
\]
and $Z_t$ a normalizing constant. We will denote by $\gamma$ the equilibrium measure of the process given by 
\[
\gamma(dy) = \frac{1}{Z} \exp\left(-\f {a|y|^2} {\sigma^2}  \right)\,dy,\qquad Z = \left(\frac{\pi \sigma^2}{a}\right)^{n/2}.
\]
Note that when $a=1$ and $\sigma= \sqrt{2}$, then $\gamma = \gamma_n$ is the standard Gaussian distribution on $\R^n$.

 We also consider the following perturbation of the Ornstein-Uhlenbeck operator
 \[
L^V = L^{\text{ou}} - V 
\]
where $V \colon \mathbb{R}^n \to \mathbb{R}$ is a potential that acts multiplicatively, namely
 $L^V f= L^{\text{ou}}f - Vf$. The associated semi-group will be denoted by $(P_t^V)_{t \geq 0}$.
 We recall that $P_t^V$ can be represented by the Feynman-Kac formula:
 \begin{prop}\label{prop:Feynman-Kac}Suppose that $V:\R^n \to \R$ is continuous and bounded from below and define for $t\geq0$ the operator $P_t^V$ by
  \[
 P_t^Vf(x) = \E\left[f(X_t^x) e^{- \int_0^t V(X_s^x)\,ds}\right],\qquad 
\forall x\in \R^n,\qquad \forall f \in \mathbb{L}^2(\gamma).
 \]
 Then $(P_t^V)_{t\geq0}$ is a semi-group on $\mathbb{L}^2(\gamma)$ with infinitesimal generator $L^V$.
 \end{prop}

In the sequel we will need the following definition. 

\begin{defi} \label{def:Q}
Let $t>0$, $x\in \R$  and let $f\in \mathbb{L}^2(\gamma) \setminus\{0\}$ be a non-negative function.  
We define the probability measure 
$\mathbb{Q}_{f,x}$ on $\Omega$ (which depends also on $t$ and $V$)  by
\[
\mathbb{Q}_{f,x}(\Gamma)=\frac{1}{P_t^Vf(x)} \int_\Gamma f(X_t^x) e^{-\int_0^t V(X_s^x)ds} \,d\mathbb{P} 
\]
and use $\mathbb{E}_{f,x}$ for the expectation with respect to $\mathbb{Q}_{f,x}$.
\end{defi}

The following result gives an explicit representation for the Hessian of $\log P_t^Vf$:

\begin{thm} \label{loghessian}
Suppose that $V \colon \mathbb{R}^n \to \mathbb{R}$ is bounded from below 
and in $\mathcal{D}^{(2)}$. For $t >0$ and $x\in \R^n$, set
\[
A_t^x :=- \int_0^t \nabla V(X_s^x)\frac{\sinh (a(t-s))}{\sinh(at)}\, ds 
+\f{2ae^{-at}} {\sigma^2(1-e^{-2at})} (X_t^x-e^{-at}x).
\]
Let $f\in \mathbb{L}^2(\gamma) \setminus\{0\}$ be non-negative ;  with the notation of  Definition \ref{def:Q}, it holds
\[
\nabla (P_t^Vf)(x)
=  \mathbb{E}_{f,x}  (A_t^x)
\] 
and
\begin{equation} \label{froid}\begin{split} 
&\Hess(\log P_t^Vf)(x) +\f{2ae^{-2at}} {\sigma^2(1-e^{-2at})} \mathrm{Id}\\
&= - \int_0^t \left( \frac{\sinh (a(t-s))}{\sinh(at)}\right)^2 \mathbb{E}_{f,x} (  \Hess V(X_s^x)) ds 
+ \mathbb{E}_{f,x} (A_t^x\otimes A_t^x) - \mathbb{E}_{f,x} (A_t^x)\otimes 
 \mathbb{E}_{f,x} (A_t^x).
\end{split}
\end{equation}
\end{thm}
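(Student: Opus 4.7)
The plan is to work with the Mehler-type kernel representation of $P_t^V$. Conditioning the Ornstein--Uhlenbeck path on its terminal value, we write $P_t^V f(x) = \int f(y)\,K_t(x,y)\,dy$ where $K_t(x,y) = Z_t^{-1} M_t(x,y)\,\psi_t(x,y)$ and $\psi_t(x,y) := \mathbb{E}\bigl[\exp(-\int_0^t V(\tilde X_s^{x,y})\,ds)\bigr]$ is the Feynman--Kac correction averaged over the OU bridge $\tilde X^{x,y}$ from $x$ at time $0$ to $y$ at time $t$. A direct Gaussian-conditioning computation shows that $\tilde X_s^{x,y} = m_s(x,y) + R_s$, with $R_s$ an $(x,y)$-independent Gaussian process and
\[
m_s(x,y) = x\,\frac{\sinh(a(t-s))}{\sinh(at)} + y\,\frac{\sinh(as)}{\sinh(at)}.
\]
In particular, the bridge sensitivity $\partial_x \tilde X_s^{x,y} = \sinh(a(t-s))/\sinh(at)$ produces the characteristic weight that appears throughout $A_t^x$.

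For the gradient formula, note that the law of $X_t^x$ under $\mathbb{Q}_{f,x}$ has density $f(y) K_t(x,y)/P_t^V f(x)$, so differentiating $\log P_t^V f(x)$ gives $\nabla \log P_t^V f(x) = \mathbb{E}_{f,x}\bigl[\nabla_x \log K_t(x,X_t^x)\bigr]$. Splitting $\log K_t = \log M_t - \log Z_t + \log \psi_t$, a direct Gaussian computation yields $\nabla_x \log M_t(x,y) = \frac{2a e^{-at}}{\sigma^2(1-e^{-2at})}(y - e^{-at}x)$, matching the second term of $A_t^x$ at $y = X_t^x$. Differentiating under the expectation in $\psi_t$ via the bridge sensitivity gives
\[
\nabla_x \log \psi_t(x,y) = -\widetilde{\mathbb{E}}_{x,y}\!\left[\int_0^t \nabla V(\tilde X_s^{x,y})\,\frac{\sinh(a(t-s))}{\sinh(at)}\,ds\right],
\]
where $\widetilde{\mathbb{E}}_{x,y}$ is the bridge expectation tilted by $e^{-\int_0^t V(\tilde X_s^{x,y})\,ds}$. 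The crucial observation is that $\widetilde{\mathbb{E}}_{x,y}(\cdot) = \mathbb{E}_{f,x}[\cdot \mid X_t^x = y]$ (this conditional law does not depend on $f$), so the tower property under $\mathbb{Q}_{f,x}$ produces the stated gradient formula.

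For the Hessian, the pointwise identity $\Hess_x K_t/K_t = \Hess_x \log K_t + \nabla_x \log K_t \otimes \nabla_x \log K_t$ (and its analogue applied to $P_t^V f$) combined with the gradient step yields
\[
\Hess \log P_t^V f(x) = \mathbb{E}_{f,x}\bigl[\Hess_x \log K_t(x,X_t^x)\bigr] + \mathrm{Cov}_{\mathbb{Q}_{f,x}}\bigl(\nabla_x \log K_t(x,X_t^x)\bigr).
\]
The first term splits: $\Hess_x \log M_t(x,y) = -\frac{2a e^{-2at}}{\sigma^2(1-e^{-2at})}\mathrm{Id}$ gives the identity term moved to the left-hand side of \eqref{froid}, while differentiating $\log \psi_t$ once more produces the integral of $\Hess V$ with weight $(\sinh(a(t-s))/\sinh(at))^2$ together with a bridge-conditional covariance of $\int_0^t \nabla V(\tilde X_s^{x,y})\,\sinh(a(t-s))/\sinh(at)\,ds$. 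Finally, the law of total covariance conditioned on $X_t^x$, combined with $\mathbb{E}_{f,x}[A_t^x \mid X_t^x] = \nabla_x \log K_t(x,X_t^x)$ from the gradient step, recombines the bridge-conditional covariance and the marginal covariance $\mathrm{Cov}_{\mathbb{Q}_{f,x}}(\nabla_x \log K_t(x,X_t^x))$ into $\mathbb{E}_{f,x}(A_t^x \otimes A_t^x) - \mathbb{E}_{f,x}(A_t^x)\otimes \mathbb{E}_{f,x}(A_t^x)$.

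The main technical obstacle will be justifying the interchange of $\nabla_x$ and $\Hess_x$ with the Feynman--Kac expectations defining $\psi_t$. This requires the polynomial growth hypothesis $V\in\mathcal{D}^{(2)}$ to dominate the derivatives of $s\mapsto V(\tilde X_s^{x,y})$ along the bridge, the lower bound on $V$ to keep $e^{-\int_0^t V(\tilde X_s^{x,y})\,ds}$ uniformly bounded, and the $\mathbb{L}^2(\gamma)$ integrability of $f$ to secure uniform integrability after weighting. Once these hypotheses are put to work, the rest is a two-step chain rule through the Mehler kernel and a clean application of the law of total covariance.
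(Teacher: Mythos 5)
Your proof is correct and rests on the same mathematical core as the paper: Feynman--Kac plus the explicit Ornstein--Uhlenbeck bridge, with its crucial \emph{linear} dependence on the starting point $x$ through $\alpha_t(s)=\sinh(a(t-s))/\sinh(at)$. The organization, however, is genuinely different and arguably cleaner. The paper differentiates under the integral sign in the representation $P_t^Vf(x)=\int f(y)\,\mathbb{E}\bigl[e^{-\int_0^t V(Y_s^{x,y})ds}\bigr]M_t(x,y)\,dy$ and then ``reverses the conditioning'' by hand; it never isolates the kernel decomposition $K_t=Z_t^{-1}M_t\psi_t$ nor the law of total covariance as named devices. You instead package the argument into three reusable steps: (i) the density of $X_t^x$ under $\mathbb{Q}_{f,x}$ is $f(y)K_t(x,y)/P_t^Vf(x)$, so $\nabla\log P_t^Vf(x)=\mathbb{E}_{f,x}[\nabla_x\log K_t(x,X_t^x)]$ and $\Hess\log P_t^Vf=\mathbb{E}_{f,x}[\Hess_x\log K_t]+\mathrm{Cov}_{\mathbb{Q}_{f,x}}(\nabla_x\log K_t)$; (ii) the $\mathbb{Q}_{f,x}$-conditional law of the path given $X_t^x=y$ is the $V$-tilted bridge, independent of $f$ (the $f(y)$ factor cancels in Bayes), so $\nabla_x\log\psi_t$ and $\Hess_x\log\psi_t$ are exactly the conditional first and second moments of $A_t^x$; (iii) the law of total covariance merges the bridge-conditional covariance and the marginal covariance into $\mathrm{Cov}_{\mathbb{Q}_{f,x}}(A_t^x)$. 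This makes the probabilistic structure transparent and the bookkeeping nearly automatic, at the modest cost of invoking the tower/Bayes identity which the paper keeps implicit. Your treatment of the differentiation-under-expectation issue (using $V\in\mathcal{D}^{(2)}$, $V$ bounded below, $f\in\mathbb{L}^2(\gamma)$, Gaussian moments of the bridge) matches the paper's approximation-by-$\mathcal{C}_K^\infty$ argument in spirit, though you leave it as a sketch. One small note: the theorem statement's first displayed identity should read $\nabla(\log P_t^Vf)(x)=\mathbb{E}_{f,x}(A_t^x)$ (as in the paper's proof); your derivation correctly produces the logarithmic gradient.
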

The notation $\nabla$ denotes the gradient with respect to the standard Euclidean metric (note that the Riemannian metric, intrinsic to the equation, is $\widetilde \nabla = \sigma^2 \nabla$).

The interested reader may find a series of articles on first/second order 
Feynman-Kac formulas for general elliptic diffusions on manifolds in \cite{She91, EL94, MS96,  APT03}. Moreover Hessian estimates can be found in  
\cite{Li16,Li18:2} under general conditions that are non-trivial to check 
(exchanging orders of operators, non-explosion, existence of global smooth flows).
In the proof of Theorem  \ref{loghessian}, we are able to compute the derivatives thanks to 
an  explicit formulation of Ornstein-Uhlenbeck bridge  (which appears to be linear in its initial position) and  the 
introduction of the probability $\mathbb{Q}_{f,x}$ (see  \cite{Li17, Li18:3} for more on elliptic diffusion bridges). 

\begin{rem}
Observe that, when $V \equiv 0$, $a=1$ and $\sigma^2=2$, $P_t^V$ is the Ornstein-Uhlenbeck semi-group. In dimension 1, after a change of variable,  \eqref{froid} reads
\begin{align*}
(\log P_t^V f)'' (x) &= -c_t^2 + \Var_{f,x}(A_t^x) - \int_0^t \alpha_t(s)^2 \mathbb{E}_{f,x}(V''(X_s^x))ds
\\&=u_t'' = 
c_t^2 \left(-1+ \mu_2(x) \right),
\end{align*}
using the notation of Section \ref{ss:OU} (with $\alpha_t(s):=\frac{\sinh (a(t-s))}{\sinh(at)}$).
\end{rem}

\begin{proof}[Proof of Theorem \ref{loghessian}]
Fix $x \in \mathbb{R}$ and $t \geq 0$. According to Proposition \ref{prop:Feynman-Kac}, it holds
\[
P_t^Vf(x)= \mathbb{E}\left( f(X_t^x)e^{-\int_0^t V(X_s^x)ds}\right)
=Z^{-1}\int_{\R^n} f(y) \mathbb{E}\left( e^{-\int_0^t V(X_s^x)ds} | X_t^x =y\right) M_t(x,y) \,dy,
\]
where  $Z=Z_t$ is the normalization constant for $M_t(x,y)$ that does not depend on $x$. 
approximation.

Conditioning on $X_t^x=y$, $(X_s^x)_{0 \leq s \leq t}$ is distributed as the
Ornstein-Uhlenbeck bridge $(Y_s^{x,y})_{0 \leq s \leq t}$, which begins at  $x$ and ends at $y$ at the final time  $t$.  To determine the dependence of  the functions with respect to the variable $x$, we use an explicit 
representation of  
$Y_s\equiv Y_s^{x,y}$ as solution of  the following equation 
\[
dY_s=\sigma dB_s-a Y_sds+ \sigma^2\nabla_x \log M_{t- s}(Y_s, y)ds,
\]
with the initial value $Y_0=x$ and where $\nabla_x$ stands for the derivative with respect to the $x$ variable.
It has a singular drift at the terminal time $t$ and so it is initially defined for $s<t$, and then extended by continuity to $X_s=z$ for $s\ge t$.  
We have
\[
\nabla_x\log M_{t} (x,y) 
=
( y-e^{-at} x) \f { 2ae^{-at} } {\sigma^2( 1-e^{-2at}) } = d_t ( y-e^{-at} x),
\]
(which is a drift  pulling toward $y$), where we set $
d_t:= \f { 2ae^{-at} } {\sigma^2( 1-e^{-2at}) } .$
Thus we get
\begin{equation} \label{eq:OUbridge}
dY_s
=\sigma dB_s+\f{2ay e^{-a(t-s)}}{1-e^{-2a(t-s)}} ds-a\f{1+ e^{-2a(t-s)}}{1-e^{-2a(t-s)}} Y_sds.
\end{equation}
The difference $Y_s^{x,y}- Y_s^{0, y}$ solves a time dependent linear equation and is given, for all $s \in [0,t]$, by
\begin{equation}\label{bridge}
Y_s^{x,y}- Y_s^{0, y}= \alpha_t(s)x, \qquad\text{where}\qquad \alpha_t(s):= \frac{\sinh (a(t-s))}{\sinh(at)}.
\end{equation}
Therefore we have
\begin{align}\label{eq:ptv} 
P_t^Vf(x)
&=Z_t^{-1}\int_{\R^n} f(y) \mathbb{E} \left(e^{-\int_0^t V(Y_s^{x,y})ds} \right) M_t(x,y)dy \nonumber \\
&=
Z_t^{-1} \int_{\R^n} f(y) \mathbb{E} \left(e^{-\int_0^t V(\alpha_t(s)x+Y_s^{0, y})ds} \right) M_t(x,y)dy,
\end{align}
Take $f\in \mathcal{C}_K^\infty$; since  $Y_s^{0,y}$ does not depend on $x$, it holds
\begin{align}\label{bridge2}
\nabla_x  \left(e^{-\int_0^t V(Y_s^{x,y})ds} \right)
 = -e^{-\int_0^t V(Y_s^{x,y})ds}\int_0^t \alpha_t(s) \nabla V(Y_s^{x,y}) ds.
\end{align}
So,
\begin{equation}\label{gradient}
 \begin{split}
\nabla (P_t^Vf)(x)
=& 
- Z_t^{-1} \int_{\R^n} f(y) \mathbb{E} \left( e^{-\int_0^t V(Y_s^{x,y})ds}\int_0^t \nabla V(Y_s^{x,y}) \alpha_t(s)\,ds  \right) \,  M_t(x,y)dy \\
& \quad + Z_t^{-1} \int_{\R^n} f(y) \mathbb{E} \left(e^{-\int_0^t V(Y_s^{x,y})ds} \right) \nabla_x  M_t(x,y) dy.
\end{split}
\end{equation}
Plugging in the expression for $\nabla_x\log M_{t} (x,y) $ and reversing the conditioning process, we see that
\begin{align*}
\nabla (P_t^Vf)(x)
&=-  \int_0^t  \mathbb{E}  \left(f(X_t^x) \, e^{-\int_0^t V(X_s^x)ds}  \nabla V(X_s^x)\right)\alpha_t(s)\,ds \\
& \quad   + d_t\mathbb{E} \left(e^{-\int_0^t V(X_s^x)ds}f(X_t^x)(X_t^x-e^{-at}x)
 \right).
\end{align*}
Therefore, for $0 \leq s \leq t$, 
$
\nabla (\log P_t^Vf)(x) 
=
\mathbb{E}_{f,x}  (A_t^x)
$ which proves the first identity of the theorem.
In the calculations above, we have taken liberty to differentiate under the integration sign, which holds 
for any smooth functions with compact support. 
Since $Y_s^{x,y}$ is Gaussian and has moments of all order and $|\nabla V|$ growth at most polynomially, if a sequence $f_n \in \mathcal{C}_K^\infty$ converges to $f \in \mathbb{L}^2(\gamma)$ then
the right hand side of the latter converges uniformly. Hence $P_t^Vf$ is differentiable and the identity holds for any $f \in \mathbb{L}^2(\gamma)$.

Using the same conditioning strategy, we can similarly compute the second 
order derivative of $P_t^Vf$, treated as a symmetric matrix.
For this we  go back to (\ref{gradient}) and differentiate under the integral signs: for any $w \in \R^n$,
\begin{align*}
&\<\Hess (P_t^Vf)(x), w\otimes w\>\\
& = 
- \int f(y) \mathbb{E} \left(e^{-\int_0^t V(Y_s^{x,y})ds}  \int_0^t \alpha_t(s)^2  \< \Hess V (Y_s^{x,y}), w \otimes w \>
\;ds \right) M_t(x,y)dy \\
&\quad +\int f(y) \mathbb{E} \left(e^{-\int_0^t V(Y_s^{x,y})ds} \left(  \int_0^t \alpha_t(s) \<\nabla V(Y_s^{x,y}), w\> 
\;ds \right)^2\right) M_t(x,y)dy\\
& \quad - 2\int f(y)  \mathbb{E} \left(e^{-\int_0^t  V(Z_s^{x,y})ds} \int_0^t \<\nabla V(Y_s^{x,y}),w\>\alpha_t(s)ds\,  \right) 
\<\nabla_x M_t(x,y), w\>dy \\
& \quad +\mathbb{E} \left(e^{-\int_0^t V(X_s^x)ds}f(X_t^x) \<\mathrm{Hess}_{x} M_t(x,z), w\otimes w\>dy\right) .
\end{align*}
The differentiation procedure holds for $f$ in $\mathcal{C}_K^\infty$, and the same approximation argument as before shows that it holds also for any $f\in \mathbb{L}^2 (\gamma)$. 

Next we observe that the following identity holds
$
\mathrm{Hess}_x \log M_t(x,y) 
= 
- d_t e^{-at}  \mathrm{Id}$
where $\mathrm{Id}$ is the $n\times n$ identity matrix.
Therefore,
\begin{align*}
\f {\Hess_x M_t(x,y)} {M_t(x,y)} 
& =
\mathrm{Hess}_x  \log M_t(x,y) +\nabla_x \log M_t(x,y) \otimes \nabla_x \log M_t(x,y) \\
& =
-d_te^{-at}\mathrm{Id}+d_t^2(  y-e^{-at} x)  \otimes(  y-e^{-at} x).
\end{align*}
Using (\ref{bridge}) and (\ref{bridge2}) we get 
\begin{align*}
& \frac{\<\Hess (P_t^Vf)(x), w\otimes w\> }{P_t^Vf(x)}  \\
& = 
- \int_0^t \alpha_t(s)^2 \mathbb{E}_{f,x}  (\< \Hess V(X_s^x), w\otimes w\>)ds 
+ \mathbb{E}_{f,x} \left(\left( \int_0^t \alpha_t(s) \<\nabla V(X_s^x) , w\> ds \right)^2\right)\\
& \quad -
2 \mathbb{E}_{f,x}  \left( \int_0^t\< \nabla V(X_s^x), w\>\alpha_t(s)ds\,d_t  \<X_t^x- e^{-at}x, w\> \right)  +d_t^2  \mathbb{E}_{f,x} \left( \<X_t^x-e^{-at}x, w\>^2\right)\\
& \quad -d_t e^{-at} |w|^2\\
& = 
- \int_0^t \alpha_t(s)^2 \mathbb{E}_{f,x}  (\< \Hess V(X_s^x), w\otimes w\>)ds \\
& \quad + 
\mathbb{E}_{f,x} \left(\left( -\int_0^t \alpha_t(s) \<\nabla V(X_s^x) , w\> ds +d_t  \<X_t-e^{-at}x, w\> \right)^2\right) -d_te^{-at}  |w|^2\\
& = - \int_0^t \alpha_t(s)^2 \mathbb{E}_{f,x}  (\< \Hess V(X_s^x), w\otimes w\>)ds + \mathbb{E}_{f,x} (\<A_t^x, w\>^2)
-d_te^{-at}  |w|^2 .
\end{align*}
We then use the identity
\[
\Hess(\log P_t^Vf)(x) = \frac{\Hess (P_t^Vf)(x) }{P_t^Vf(x)} - \frac{\nabla(P_t^Vf)(x) \otimes \nabla(P_t^Vf)(x)}{P_t^Vf(x)^2} 
\]
to obtain the following 
\begin{align*}
\Hess(\log P_t^Vf)(x) 
=&-d_te^{-at}\mathrm{Id} - \int_0^t \alpha_t(s)^2 \mathbb{E}_{f,x}  ( \Hess V(X_s^x))ds \\
&\quad + \mathbb{E}_{f,x} (A_t^x\otimes A_t^x) - \mathbb{E}_{f,x} (A_t^x)\otimes  \mathbb{E}_{f,x} (A_t^x) .
\end{align*}
This completes  the proof.
\end{proof}

\begin{rem}
Using Equation \eqref{eq:OUbridge} we see that the Ornstein-Uhlenbeck starting from $x$ conditioned to reach $z$ at time $t$ has the following explicit representation:
\begin{equation*}
Z_s^{x, z}=\alpha_t(s) x+  z \int_0^s \f {a\;  \sinh (a(t-s))}{\sinh^2(a(t-r))}dr+\sigma \int_0^s \f { \sinh (a(t-s))}{\sinh(a(t-r))}  dB_r.
\end{equation*}
The one dimensional case can be found in \cite{Don90}, see also \cite{BK13} and the reference therein.  
\end{rem}

\begin{rem} \label{rem}
In some situations,  it might be also useful to control the second order derivative of the semi-group by the derivatives of $f$ themselves.
For example, in \cite{GRS11:2,GRS13}, the authors deal with log-semi-convex functions in order to get a characterization of transport inequalities.
The result below shows how such a log-semi-convexity transfers to the semi-group. More precisely, assume that, in addition to the hypotheses of the theorem, 
$f \colon \R^n \to \R$ is in $\mathcal{D}^{(2)}$ then, with the notation in  the proof of the theorem, it also holds
\begin{align} \label{froid2}
\mathrm{Hess}(\log P_t^V f) (x) 
 =\, & e^{-2at} \mathbb{E}_{f,x} ( \mathrm{Hess}(\log f)(X_t^x) )  - \int_0^t \mathbb{E}_{f,x}(\mathrm{Hess}(V)(X_s^x))e^{-2as}ds\\
 &+ 
\mathbb{E}_{f,x}(\tilde{A}_t^x \otimes \tilde{A}^x_t) 
 - \mathbb{E}_{f,x}(\tilde{A}_t^x) \otimes \mathbb{E}_{f,x}(\tilde{A}_t^x)   \nonumber
\end{align}
where $\tilde{A}_t^x:=\int_0^t \nabla V(X_s^x)e^{-as} ds$.

 Observe that $X_s^x=e^{-as}x + \sigma \int_0^s e^{-a(s-r)} dB_r$ so that, using the Feynman-Kac formula, 
\[
P_t^Vf(x) =  \mathbb{E} \left( f\left(e^{-at}x + \sigma \int_0^t e^{-a(t-r)} dB_r \right) e^{-\int_0^t V(e^{-as}x + \sigma \int_0^s e^{-a(s-r)} dB_r)ds} \right) 
\]
it holds
\[
\nabla P_t^Vf(x)
= 
e^{-at} \mathbb{E}\left(\nabla f(X_t^x) e^{-\int_0^t V(X_s^x)ds} \right) 
- 
\mathbb{E}\left(  f(X_t^x) \int_0^t \nabla V(X_s^x)e^{-as}ds e^{-\int_0^t 
V(X_s^x)ds} \right) .
\]
Differentiating one more time, we get
\begin{align*}
\mathrm{Hess} (P_t^Vf)(x) 
& =
e^{-2at} \mathbb{E}\left( \mathrm{Hess}(f)(X_t^x)e^{-\int_0^t V(X_s^x)ds} 
\right) \\
& \quad -
2 \mathbb{E}\left(  \nabla f(X_t^x) \otimes \int_0^t \nabla V(X_s^x)e^{-as}ds e^{-\int_0^t V(X_s^x)ds} \right) \\
& \quad 
- \mathbb{E}\left(f(X_t^x) \int_0^t \mathrm{Hess}V(X_s^x)e^{-2as}ds e^{-\int_0^t V(X_s^x)ds} \right) \\
& \quad +
\mathbb{E}\left(  f(X_t^x) \int_0^t \nabla V(X_s^x)e^{-as}ds \otimes \int_0^t \nabla V(X_s^x)e^{-as}ds \;e^{-\int_0^t V(X_s^x)ds} \right)
\end{align*}
from which the expected result follows.
\end{rem}
 
Thanks to the result of the previous section and with the help of the $h$-transform, we can obtain explicit formula for the log-Hessian of 
general diffusion semi-groups. 

Given $W \colon \mathbb{R}^n \to \mathbb{R}$, smooth enough, and the operator 
$L^V=\Delta - x \cdot \nabla - V$ on $\mathbb{L}^2(\gamma_n)$,
we define the operator $\mathcal{L}^W$ on $\mathbb{L}^2(e^{W/2}\gamma_n)$ 
by the unitary transform 
($h$-transform) below:
\begin{align*}
\mathcal{L}^W f &:= e^{-W/2} L^V(e^{W/2}f)\\
& = \Delta f -(x-\nabla W) \cdot \nabla f + \left(\frac{1}{2}\Delta W + 
\frac{1}{4} |\nabla W|^2 - \frac{1}{2} x \cdot \nabla W - V \right) f,
\end{align*}
whose associated semi-group $\mathcal{P}_t^W$ is intertwined with 
 $P_t^V$ by
 \[
 \mathcal{P}_t^W f = e^{-W/2} P_t^V (e^{W/2}f).
 \]
$\R^n$.
Let $h: \R^n\to \R$ and define $d\mu_h(x)=e^{-h}dx$.
If we seek a representation for the reversible operator
\[
L_h := \Delta - \nabla h \cdot \nabla
\]
on $\mathbb{L}^2(\mu_h)$ of the form $L_h f=\mathcal{L}^Wf=e^{-W/2} L^V(e^{W/2}f)$, 
we choose $W$ and then $V$ so that
$$
\nabla \left( \frac {|x|^2} 2- W\right)  = \nabla h
\qquad \text{and} \qquad
\frac{1}{2}\Delta W + \frac{1}{4} |\nabla W|^2 - \frac{1}{2} x \cdot \nabla W - V = 0 .
$$
A  function  $f$ belongs to $\mathbb{L}^2(\mu_h)$  if and only if  $fe^{W/2}$ belongs to $\mathbb{L}^2(\gamma_n)$.
We  denote by $(\mathcal{P}_t)_{t \geq 0}$ the semi-group associated to the diffusion operator $L_h:=\Delta - \nabla h \cdot \nabla$. The operator $L_h$ is essentially self-adjoint on $\mathcal{C}_K^\infty$, see \cite{Li92:phd}.
Theorem \ref{loghessian} and Remark \ref{rem} (with $a=1$ and $\sigma = 
\sqrt{2}$) then admits the following immediate corollary.
\begin{cor} \label{cor:loghessian}
Let $h \colon \mathbb{R}^n \to \mathbb{R}$ belongs to $\mathcal{D}^{(4)}$. Set $\mu_h(dx)=e^{-h(x)}dx$ and
$$
W(x):=\frac{|x|^2}{2} - h(x), \quad \qquad 
V(x):= \frac{1}{2}(n-\Delta h) - \frac{1}{4}(|x|^2-|\nabla h|^2), \quad 
x \in \mathbb{R}^n.
$$ 
Assume that $V$ is bounded from below. Let $f \in \mathbb{L}^2(\mu_h) \setminus \{0\}$ be non negative and, for all $x \in \R^n$, denote by $\E_{f,x}^W$ the expectation with respect to  the probability measure $\mathbb{Q}_{e^{\frac W 2} f, x}$ introduced in Definition \ref{def:Q}.
at $x$. 
Then
\begin{align}  \label{cor:froid}
\mathrm{Hess}(\log \mathcal{P}_t f)(x) 
& = 
  - \frac{1}{2}(\mathrm{Id}-\mathrm{Hess}(h)(x))  + \mathbb{E}_{f,x}^W (A_t^x\otimes A_t^x) - \mathbb{E}_{f,x}^W (A_t^x)\otimes  \mathbb{E}_{f,x}^W (A_t^x) 
  \nonumber \\
& \quad  -c_t^2\mathrm{Id} - \int_0^t \left( \frac{\sinh (t-s)}{\sinh(t)}\right)^2 \mathbb{E}_{f,x}^W (  \Hess V(X_s^x)) ds 
\end{align}
where
\[
A_t^x:=- \int_0^t \nabla V(X_s^x)\frac{\sinh (t-s)}{\sinh(t)}\, ds 
+\f{e^{-t}} {1-e^{-2t}} (X_t^x-e^{-t}x)
\]
and $c_t$ is given by \eqref{eq:ct}.
Assume in addition $f\in \mathcal{D}^{(4)}$, then for $\tilde{A}_t^x:=\int_0^t \nabla V(X_s^x)e^{-as} ds$, 
the following holds \begin{align} \label{cor:froid2}
\mathrm{Hess}(\log \mathcal{P}_t f) (x) 
& = - \frac{1}{2}(\mathrm{Id}-\mathrm{Hess}(h)(x)) + e^{-2at} \mathbb{E}_{f,x}^W ( \mathrm{Hess}(\log f)(X_t^x) ) \\ 
& \quad +  \mathbb{E}_{f,x}^W(\tilde{A}^x_t \otimes \tilde{A}^x_t) - \mathbb{E}_{f,x}^W(\tilde{A}^x_t) \otimes \mathbb{E}_{f,x}^W(\tilde{A}^x_t) 
 - \int_0^t \mathbb{E}_{f,x}^W(\mathrm{Hess}(V)(X_s^x))e^{-2as}ds.  \nonumber
\end{align}
\end{cor}

Observe that, as for Theorem \ref{loghessian}, Corollary \ref{cor:loghessian} contains the case of the Ornstein-Uhlenbeck semi-group which corresponds to the trivial case $W=V=0$.

\section{The $M/M/\infty$ semi-group}\label{sec:mmi}

In this section we deal with the $M/M/\infty$ queuing process, which is a 
discrete analogue of the Ornstein-Uhlenbeck process on the integers $\mathbb{N}:=\{0,1\dots\}$. 
First we obtain lower bounds of $\Delta \log P_t f$,  where $\Delta$ is the discrete Laplacian. Then, 
we investigate  the deviation property of log-semi-convex functions and prove that such a property, contrary to the continuous setting, 
does not hold unless the function is log-convex. In the last subsection, we prove that the Talagrand Regularization effect in $\mathbb{L}^1$ holds 
by means of the strategy of the uniform bound on $P_t$ presented in the introduction. We start with the notation.

\subsection{Notation and setting}\label{Sec:notation}
In all what follows, we will deal with the following classical probability distributions on $\N$ :
\begin{itemize}
\item $\mathcal{B}(n,p)$ stands for the binomial probability measure of parameters $n \in \mathbb{N}$ and $p \in [0,1]$, with the convention that $\mathcal{B}(n,0)=\delta_0$ (the Dirac mass at $0$) and $\mathcal{B}(n,1)=\delta_n$. When $n=1$, we simply denote by $\mathcal{B}(p)$ the Bernoulli distribution of parameter $p$.
\item $\mathcal{P}( \theta)$ stands for the Poisson probability measure of intensity $\theta$  whose probability distribution function will be denoted by $\pi_\theta$ and is given by  $\pi_\theta(k)=e^{-\theta} \theta^k/k!$, $k \in \mathbb{N}$. At some points, we will make a slight abuse of notation and write $\pi_\theta (A) = \sum_{a \in A} \pi_\theta(a)$, for $A \subset \N.$
\end{itemize}

The $M/M/\infty$ queuing  process is defined through its infinitesimal generator $L$, acting on functions on the integers as
\begin{equation}\label{eq:MMgen}
Lf(n):= n\mu \left[f(n-1)-f(n)\right] + \lambda \left[f(n+1)-f(n)\right], \quad n \in \mathbb{N}
\end{equation}
where $\lambda,\mu>0$ are fixed parameters. In the above expression, there is no need to define $f(-1)$ since it is multiplied by $0$. 
We use the following notation for the discrete derivative:
\[
Df(n):=f(n+1)-f(n), \qquad n \in \mathbb{N} ,
\]
and for the discrete second order derivative (Laplacian):
\begin{equation} \label{laplacian}
\Delta f(n) := f(n+1)+f(n-1)-2f(n) = D(Df)(n-1), \qquad n \in \mathbb{N}\setminus\{0\} .
\end{equation}
Then $Lf(n)=\lambda \Delta f(n)-(n\mu -\lambda) Df(n-1)$.

Denote by $(X_t)_{t \geq 0}$ the Markov (jump) process associated to $L$, 
so that for all (say) bounded function $f$ it holds $P_tf(n)=\mathbb{E}(f(X_t) | X_0=n)$, $n\in \N.$
A remarkable feature of the $M/M/\infty$ queuing process is that
\[
\mathcal{L}(X_t | X_0=n)= \mathcal{B}(n,p(t)) \star \mathcal{P}(\rho q(t) )
\]
where $\star$ stands for the convolution,
\[
p(t):= e^{-\mu t}, \quad q(t)=1-p(t), \quad \rho = \frac{\lambda}{\mu}.
\]
 In other words
\[
P_tf(n)=\mathbb{E}(f(Y_t+Z_t))
\]
with $Y_t \sim \mathcal{B}(n,p(t))$ independent of $Z_t \sim \mathcal{P}(\rho q(t) )$ which can be seen as an analogue of the Mehler Formula \eqref{eq:ou} for the Ornstein-Uhlenbeck semi-group.

Finally, we recall that the $M/M/\infty$ queuing process is reversible with respect to the Poisson measure $\mathcal{P}(\rho)$.

In the next section we deal with estimates on the Laplacian of $\log P_tf$.

\subsection{{Log-semi-convexity} of the queuing process}
\label{ss:mminf-slc}

In this section we investigate the behavior of $\Delta \log P_tf$. 
The main result of the section (Proposition \ref{prop:pt}) is that for any starting function $f$ on the integers, 
as for the Ornstein-Uhlenbeck semi-group, $\Delta \log P_tf$ is bounded below by some universal constant depending only on $t$ 
and on the parameters of the process (namely $\lambda$ and $\mu$), but not on $f$. 

\begin{prop} \label{prop:pt}
Let $f \colon \mathbb{N} \to \mathbb{R}^+$ not identically vanishing. Then for all $t>0${ ,}
\begin{equation}\label{eq:prop:pt}
\Delta \log P_tf(n) \geq \log \left(\frac{1}{12} \left( 1-\frac{p^2}{(p+\rho(1-p)^2)^2}\right)\right) \qquad n=1,2 \dots 
\end{equation}
with $p=p(t)=e^{-\mu t}$ and $\rho=\lambda/\mu$.
\end{prop}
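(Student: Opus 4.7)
My plan is to reduce the inequality to a pointwise statement in $m$ and then exploit the Mehler structure of the transition kernel. Writing $P_t f(n) = \sum_m f(m) h_n(m)$ with $h_n$ the law of $Y_t + Z_t$ ($Y_t \sim \mathcal{B}(n,p)$, $Z_t \sim \mathcal{P}(\rho q)$), the Cauchy--Schwarz inequality $\left(\sum_m f(m) h_{n-1}(m)\right)\left(\sum_m f(m) h_{n+1}(m)\right) \geq \left(\sum_m f(m) \sqrt{h_{n-1}(m) h_{n+1}(m)}\right)^2$ reduces the statement to the pointwise bound $h_{n-1}(m) h_{n+1}(m) \geq C\, h_n(m)^2$ for every $m \geq 0$, where $C := \tfrac{1}{12}\bigl(1 - p^2/(p+\rho q^2)^2\bigr)$.

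For the pointwise estimate, the one-step decomposition $h_n = h_{n-1} * \mathrm{Ber}(p)$ gives $h_n(m) = q h_{n-1}(m) + p h_{n-1}(m-1)$ and $h_{n+1}(m) = q^2 h_{n-1}(m) + 2pq h_{n-1}(m-1) + p^2 h_{n-1}(m-2)$, from which a direct algebraic expansion yields the crucial identity
\[
h_n(m)^2 - h_{n-1}(m) h_{n+1}(m) = p^2 \bigl[h_{n-1}(m-1)^2 - h_{n-1}(m) h_{n-1}(m-2)\bigr] =: p^2 D_{n-1}(m).
\]
Because $h_{n-1}$ is log-concave in $m$ (as a convolution of the two log-concave pmfs $\mathcal{B}(n-1,p)$ and $\mathcal{P}(\rho q)$), $D_{n-1}(m) \geq 0$, and the pointwise claim becomes $p^2 D_{n-1}(m) \leq (1-C)\, h_n(m)^2$.

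To control this ratio I would use the factorization $h_n(m) = \pi_{\rho q}(m)\, q^n A_m^{(n)}$ where $A_m^{(n)} := \sum_j \binom{n}{j} \lambda^j (m)_j$ with $\lambda := p/(\rho q^2)$; this reduces the desired ratio to $A_m^{(n-1)} A_m^{(n+1)}/(A_m^{(n)})^2$. The polynomials $A_m^{(n)}$ are governed by the generating function $\sum_n A_m^{(n)} t^n / n! = e^t (1+\lambda t)^m$, from which one reads the three-term recursion $A_m^{(n+1)} = (1+\lambda(m-n))\, A_m^{(n)} + \lambda n\, A_m^{(n-1)}$. At $n = 1$ this gives directly $A_m^{(0)} A_m^{(2)}/(A_m^{(1)})^2 = 1 - \lambda^2 m/(1+\lambda m)^2$; maximizing $\lambda^2 m/(1+\lambda m)^2$ over integers $m \geq 1$ (attained at $m=1$ when $\lambda \geq 1$ and at the integer closest to $1/\lambda$ otherwise) and comparing with $C$ shows the bound easily holds for $n=1$, with the tight case $n=m=1$ giving $(2\lambda+1)/(1+\lambda)^2 = 12\, C$ and accounting for the prefactor $\tfrac{1}{12}$.

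The hard part will be extending the estimate to all $n \geq 2$. My approach is to monitor the ratio $r_n := A_m^{(n)}/A_m^{(n-1)}$, for which the recursion becomes $r_{n+1} = 1 + \lambda(m-n) + \lambda n / r_n$; an elementary induction (using $A_m^{(n+1)} - A_m^{(n)} = \lambda m A_{m-1}^{(n)} > 0$, so that $A_m^{(n)}$ is strictly increasing in $n$, giving $r_n \geq 1$, together with $r_{n+1} \leq r_1$ whenever $r_n \geq 1$) confines $r_n$ to $[1, 1+\lambda m]$. The quantity to be lower-bounded, $\phi_n := r_{n+1}/r_n$, is then the quadratic $(1+\lambda(m-n))\, y_n + \lambda n\, y_n^2$ in $y_n := 1/r_n \in [1/(1+\lambda m), 1]$; for $n$ not much larger than $m$ this quadratic is monotone on the relevant range so that $\phi_n$ is bounded below by its endpoint value at $y_n = 1/(1+\lambda m)$, while for very large $n$ the asymptotics $r_n \to 1$ force $\phi_n \to 1$ and the bound holds trivially. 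The generous factor $\tfrac{1}{12}$ is precisely the slack that absorbs the loss in this case-by-case comparison.
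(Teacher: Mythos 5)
Your reduction steps are sound and genuinely depart from the paper's route. The Cauchy--Schwarz inequality correctly reduces the statement to the pointwise bound $h_{n-1}(m)\,h_{n+1}(m)\ge C\,h_n(m)^2$ (this replaces the paper's Lemma~3.2 on positive combinations of log-semi-convex functions by a one-line argument), the one-step convolution identity
\[
h_n(m)^2-h_{n-1}(m)\,h_{n+1}(m)=p^2\bigl[h_{n-1}(m-1)^2-h_{n-1}(m)\,h_{n-1}(m-2)\bigr]
\]
is a clean and correct observation, and the factorization $h_n(m)=\pi_{\rho q}(m)\,q^n A_m^{(n)}$ with $A_m^{(n)}=\sum_j\binom{n}{j}\lambda^j(m)_j$, the generating function $e^t(1+\lambda t)^m$, and the recursion $A_m^{(n+1)}=(1+\lambda(m-n))A_m^{(n)}+\lambda n A_m^{(n-1)}$ are all verified correctly. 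The $n=1$ computation is also correct and does explain the appearance of the constant $C$. Structurally this differs from the paper, which uses reversibility to transfer to the ``Laplacian in the evaluation variable'' and then performs a three-case combinatorial analysis using the decomposition $G_k=\mathcal{B}(k-1,p)*G_1$; you instead stay with the Laplacian in the starting variable and analyze a second-order recursion.

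However, the argument for $n\ge 2$ has a genuine gap and, as written, is incorrect. The confinement $r_n\in[1,1+\lambda m]$ only translates into $\phi_n\ge 1/(1+\lambda m)$, which is far below $C$ as soon as $m$ is large. Your proposed remedy --- treat the quadratic $\phi_n=[1+\lambda(m-n)]y_n+\lambda n y_n^2$ as monotone for ``$n$ not much larger than $m$'' and take the endpoint value $y_n=1/(1+\lambda m)$ --- does not work: that endpoint value equals $1-\lambda^2 mn/(1+\lambda m)^2$, which for $n\approx m$ large is roughly $(1+2\lambda m)/(1+\lambda m)^2\to 0$, strictly below the fixed constant $C>0$. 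For instance, with $\lambda=1$, $m=n=100$, the endpoint value is about $0.02$ while $C=1/16$. So the inequality you actually need is \emph{not} implied by the endpoint bound, even in the ``easy'' regime $n\le m$ where the quadratic is genuinely increasing. The step fails because $y_n$ does not sit at $1/(1+\lambda m)$ for $n\ge 2$; you would need to track the actual trajectory of $r_n$ far more precisely than the crude confinement $[1,1+\lambda m]$ allows. Likewise, the assertion that ``for very large $n$ the asymptotics $r_n\to1$ force $\phi_n\to1$'' is not quantitative and does not by itself cover any explicit range of $n$; the middle regime between the two heuristics is never addressed. The closing remark that the $1/12$ slack absorbs the loss is unsubstantiated, and a correct proof along these lines would require either a sharper two-term invariant for $(r_n,r_{n+1})$ or a completely different control on the ratio.
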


\begin{rem}
Notice the right hand side of \eqref{eq:prop:pt} tends to $-\infty$ when $t \to 0^+$, as it should be,
since $f$ can be any function. On the other hand, the right hand side of  
\eqref{eq:prop:pt} tends to $-\log (12)$ as $t$ tends to $\infty$.
This comes from the technicality of the proof, we believe however that there should exist a lower bound on 
$\Delta \log P_tf(n)$ that tends to $0$ as $t$ tends to infinity. 
\end{rem}

The proof of Proposition \ref{prop:pt} relies on the following lemma which asserts that a positive combination of log-convex (or more generally log-semi-convex) functions is log-convex (log-semi-convex). 

\begin{lem} \label{lem:combination}
Let $f_i \colon \mathbb{N} \to (0,\infty)$, $i=1,\dots,N$, be a family of positive functions, with $N$ possibly infinite. 
Assume that for all $i$  and all $n=1,\dots$, $\Delta \log f_i (n)\geq -\beta_i$ for some $\beta_i \in \mathbb{R}$. Then, for all $\alpha_1,\dots,\alpha_N>0$, 
\[
 \Delta \log \left( \sum_{i=1}^N \alpha_i f_i \right) \geq - \max_{1\leq i\leq N}  \beta_i.
\]
\end{lem}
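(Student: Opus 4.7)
The plan is to observe that the hypothesis $\Delta \log f_i(n) \geq -\beta_i$ is exactly the multiplicative statement
\[
f_i(n+1)\, f_i(n-1) \geq e^{-\beta_i} f_i(n)^2,
\]
so the inequality to be proved, after writing $\beta := \max_i \beta_i$, becomes the multiplicative statement
\[
\Bigl(\sum_i \alpha_i f_i(n+1)\Bigr)\Bigl(\sum_i \alpha_i f_i(n-1)\Bigr) \geq e^{-\beta}\Bigl(\sum_i \alpha_i f_i(n)\Bigr)^2.
\]
So the claim amounts to showing that the class of sequences satisfying a given log-semiconvexity bound is closed under positive linear combinations.

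The key tool is the Cauchy--Schwarz inequality applied to the vectors with entries $\sqrt{\alpha_i f_i(n+1)}$ and $\sqrt{\alpha_i f_i(n-1)}$. First I would replace $f_i$ by $g_i := \alpha_i f_i$ (which has the same log-Laplacian, so also satisfies $g_i(n+1)g_i(n-1) \geq e^{-\beta_i} g_i(n)^2 \geq e^{-\beta} g_i(n)^2$). Then I would take square roots to get $\sqrt{g_i(n+1) g_i(n-1)} \geq e^{-\beta/2} g_i(n)$, sum over $i$, and apply Cauchy--Schwarz to the left-hand side:
\[
\sum_i g_i(n) \leq e^{\beta/2} \sum_i \sqrt{g_i(n+1) g_i(n-1)} \leq e^{\beta/2} \sqrt{\sum_i g_i(n+1)}\,\sqrt{\sum_i g_i(n-1)}.
\]
Squaring both sides and rearranging yields exactly the multiplicative inequality above, which is equivalent to $\Delta \log(\sum_i \alpha_i f_i)(n) \geq -\beta$.

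There is essentially no obstacle: the only subtle point is ensuring that the argument handles the case $N = \infty$, which it does as soon as the sums involved converge (so that the resulting function is finite and positive, a standing assumption since the statement considers $\log(\sum_i \alpha_i f_i)$). No appeal to the structure of the $M/M/\infty$ semigroup is needed — the lemma is a purely convex-analytic fact about discrete log-semiconvex sequences, which is precisely why it will be useful for bounding $\Delta \log P_t f$ in Proposition~\ref{prop:pt} by representing $P_t f(n)$ as a positive linear combination of explicit kernel functions.
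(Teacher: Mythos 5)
Your proof is correct, and it takes a genuinely different — and cleaner — route than the paper's. The paper proves the case $N=2$ by a hands-on expansion of $\exp\{\Delta\log(f+g)(n)\}$ into four terms, bounding the diagonal terms by the hypothesis directly, handling the cross terms with an AM--GM step $\bigl(\min_{x>0}(xe^{-\beta_f-\beta_g}+1/x)=2e^{-(\beta_f+\beta_g)/2}\bigr)$, and recognizing the result as a perfect square over $(f(n)+g(n))^2$; it then extends to general $N$ by induction (and to $N=\infty$ by passing to the limit). Your Cauchy--Schwarz argument — taking square roots of $g_i(n+1)g_i(n-1)\geq e^{-\beta}g_i(n)^2$, summing, and then applying Cauchy--Schwarz with the vectors $(\sqrt{g_i(n+1)})_i$ and $(\sqrt{g_i(n-1)})_i$ — collapses all of this into a single step that treats any $N$, including $N=\infty$, uniformly and without induction. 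This is essentially the observation that the set of discrete log-semiconvex sequences with parameter $\beta$ is closed under positive linear combinations, phrased as a one-line Cauchy--Schwarz, and it is arguably the more conceptual proof; the paper's version buys you a slightly sharper intermediate bound $\bigl(\sum_i g_i(n)e^{-\beta_i/2}\bigr)^2/\bigl(\sum_i g_i(n)\bigr)^2$ keeping the individual $\beta_i$'s, which is then relaxed to the maximum anyway, so nothing is lost in your approach for the purposes of the lemma.
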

The continuous counterpart of this result is classical and could be used to prove this discrete statement. For the sake of completeness we give below a direct proof. 
\begin{proof}[Proof of Lemma \ref{lem:combination}]
By induction, and possibly taking the limit, it suffices to prove the result for $N=2$. Moreover, by homogeneity we can assume without loss of generality that $\alpha_1=\alpha_2=1$. Replacing $\beta_1,\beta_2$ by $\max(\beta_1,\beta_2)$ one can also assume that $\beta_1=\beta_2=\beta \in \R$. Let $f,g \colon \mathbb{N} \to (0,\infty)$ be two positive functions with
$\Delta \log f \geq -\beta$ and $\Delta \log g \geq -\beta$. 
Since $u_\beta(n) := \beta n^2/2$, $n \in \N$, satisfies $\Delta u_\beta (n) = \beta$ for all $n\geq 1$, putting $\tilde{f} := fe^{u}$ and $\tilde{g} := ge^{u}$, it is enough to prove that $\Delta \log (\tilde{f}+\tilde{g}) \geq0$. The assumption $\Delta \log \tilde{f} \geq0$ and $\Delta \log \tilde g \geq0$ guarantees that, for all $n\geq1$, it holds
\[
\tilde{f}(n) \leq \sqrt{\tilde{f}(n+1)} \sqrt{\tilde{f}(n-1)}\qquad \text{and}\qquad\tilde{g}(n) \leq \sqrt{\tilde{g}(n+1)} \sqrt{\tilde{g}(n-1)}.
\]
Adding these inequalities, and applying Cauchy-Schwarz, yields to
\[
\tilde{f}(n)+\tilde{g}(n)  \leq \sqrt{\tilde{f}(n+1)+\tilde{g}(n+1) } \sqrt{\tilde{f}(n-1)+\tilde{g}(n-1)},
\]
which shows that $\Delta \log (\tilde{f}+\tilde{g}) \geq0$ and completes the proof.
\end{proof}

\begin{proof}[Proof of Proposition \ref{prop:pt}]
Fix $t > 0$; we have
\[
P_tf(n) = \mathbb{E}(f(X_t) | X_0=n) = \sum_{k=0}^\infty f(k) \mathbb{P}(X_t = k |X_0=n),\qquad \forall n \in \N.
\]
For all $k \in \N$, denote by $F_k(n) = \P(X_t = k | X_0=n)$, $n \in \N$.
According to Lemma \ref{lem:combination}, it is enough to show that for all $k \in \N$, it holds
\begin{equation}\label{eq:toprove}
\Delta \log F_k(n) \geq \log \left(\frac{1}{12} \left( 1-\frac{p^2}{(p+\rho(1-p)^2)^2}\right)\right),\qquad \forall n\geq 1.
\end{equation}
Since $P_t$ is reversible with respect to $\mathcal{P}(\rho)$, it holds 
\[
\P(X_t = k | X_0=n) = \pi_\rho(k)\frac{\P(X_t=n | X_0=k)}{\pi_\rho(n)},\qquad n\in \N.
\]
Therefore,
\[
\log F_k(n) = \log \pi_\rho(k) - \log \pi_\rho(n) + \log G_k(n),
\]
where $G_k(n) =\P(Y_t+Z_t = n)$, with as above, $Y_t \sim \mathcal{B}(k,p)$ and $Z_t \sim \mathcal{P}(\rho (1-p))$. 
A simple calculation shows that, for any parameter $\theta>0$, it holds for all $n\geq 1$
\[
\Delta \log \pi_\theta (n) = \log \left( \frac{\pi_\theta(n+1)\pi_\theta(n-1)}{\pi_\theta(n)^2}\right) = \log \frac{(n!)^2}{(n+1)! (n-1)!} = 
\log \frac{n}{n+1}.
\]
From this follows that $\Delta \log F_0(n)=0$, $n\geq 1$, and that for $k\geq 1$, $\Delta \log F_k(n) \geq \Delta \log G_k(n)$, $n\geq 1.$ So it is enough to show that the bound \eqref{eq:toprove} is satisfied by $G_k$.

Let us first treat the case $k=1$ and show the following slightly better lower bound: 
\[
\Delta \log G_1 \geq \log\left(\frac{1}{2} \left( 1-\frac{p^2}{(p+\rho(1-p)^2)^2}\right)\right) := - \alpha
\] 
or equivalently
\begin{equation}\label{eq:G_1}
G_1(n)^2 \leq e^{\alpha}G_1(n+1)G_1(n-1),\qquad \forall n\geq 1.
\end{equation}
For all $n\geq 0$, it holds 
\[
G_1(n) = \left((1-p) + p \frac{n}{\rho(1-p)} \right) \frac{(\rho (1-p))^n}{n!} e^{-\rho (1-p)} ,\qquad n\geq 1.
\]
So, for $n\geq 1$,
\begin{align*}
\frac{G_1(n+1)G_1(n-1)}{G_1(n)^2} &= \frac{n}{n+1} \frac{ \left((1-p) + 
p \frac{n+1}{\rho(1-p)} \right)  \left((1-p) + p \frac{n-1}{\rho(1-p)} \right) }{ \left((1-p) + p \frac{n}{\rho(1-p)} \right)^2}\\
& =  \frac{n}{n+1} \frac{ \left((1-p) + p \frac{n}{\rho(1-p)} \right)^2 
 -  \left(\frac{p}{\rho(1-p)}\right)^2}{ \left((1-p) + p \frac{n}{\rho(1-p)} \right)^2}\\
& \geq \frac{1}{2} \left(1 - \frac{p^2}{\left(\rho (1-p)^2 + p \right)^2}\right)
\end{align*}
and so taking the $\log$ gives the announced lower bound for $\Delta \log 
G_1.$
\begin{rem}
Note that one could be more accurate by keeping the $\frac{n}{n+1}$ factor which eventually yields to the bound 
\[
\Delta \log F_1(n) \geq  \log  \left(1 - \frac{p^2}{\left(\rho (1-p)^2 + p \right)^2}\right),\qquad n\geq1.
\]
\end{rem}
Now let us treat the case $k\geq 2$. It will be convenient to write $Y_t = Y'_t + \varepsilon_t$ with $Y'_t \sim \mathcal{B}(k-1,p)$ and $\varepsilon_t \sim \mathcal{B}(p)$ two independent random variables also independent of $Z_t$.
Conditioning with respect to $Z_t + \varepsilon_t$ and using \eqref{eq:G_1}, we get
\begin{align}\label{eq:inegutile}
G_k(n) & = \sum_{j=0}^{n} \P(Y_t' = j)G_1(n-j)\\\notag
& \leq   
\P(Y_t'  = n)G_1(0)+ e^{\alpha/2}  \sum_{j=0}^{n-1} \P(Y_t'  = j)G_1(n+1-j)^{1/2}G_1(n-1-j)^{1/2}\\\notag
& \leq  \P(Y_t' = n)G_1(0) \\
& \quad +e^{\alpha/2}   \left(\sum_{j=0}^{n-1} \P(Y_t'= j)G_1(n+1-j)\right)^{1/2}  \left(\sum_{j=0}^{n-1} \P(Y_t' = j)G_1(n-1-j)\right)^{1/2}\\\notag
&= \P(Y_t'  = n)G_1(0)
+e^{\alpha/2}   \left(\sum_{j=0}^{n-1} \P(Y_t'= j)G_1(n+1-j)\right)^{1/2}  G_{k}(n-1)^{1/2}
\end{align}
Now let us treat separately the cases : 
\[
\text{(a)} \ n\geq k \geq 2, \qquad  \text{(b)}\  1\leq n \leq k-2, k\geq 
3 \qquad  \text{(c)}\  n=k-1, k\geq 2.
\]

(a) Suppose $n \geq k \geq 2$, then $\P(Y_t'  = n)=0$ and so \eqref{eq:inegutile} yields to
\[
G_k(n) \leq e^{\alpha/2} G_k(n+1)^{1/2} G_k(n-1)^{1/2}.
\]

(b) Fix $k\geq 3$. Let us admit for a moment that there exists $\beta>0$ (independent of $k$) such that for all $1\leq n \leq k-2$, 
\begin{equation}\label{eq:binomineq}
\P(Y_t' = n) \leq e^{\beta/2}\P(Y_t'  = n-1)^{1/2}\P(Y_t'  = n+1)^{1/2},\qquad \forall 1\leq n \leq k-2.
\end{equation}
As we will see below, the optimal $\beta$ is $\log 3$.
If $1\leq n\leq k-2$, then inserting \eqref{eq:binomineq} into \eqref{eq:inegutile} gives
\begin{align*}
G_k(n) & \leq e^{\beta/2}\left(\P(Y_t'  = n+1)G_1(0)\right)^{1/2}\left(\P(Y_t'  = n-1)G_1(0)\right)^{1/2} \\
&+e^{\alpha/2}   \left(\sum_{j=0}^{n-1} \P(Y_t' = j)G_1(n+1-j)\right)^{1/2}  G_{k}(n-1)^{1/2}\\
& \leq e^{\max(\alpha;\beta)/2} \left[\left(\P(Y_t' = n+1)G_1(0)\right)^{1/2} +  \left(\sum_{j=0}^{n-1} \P(Y_t'  = j)G_1(n+1-j)\right)^{1/2}\right] G_k(n-1)^{1/2}\\
& \leq \sqrt{2}e^{\max(\alpha;\beta)/2} G_k(n+1)^{1/2} G_k(n-1)^{1/2},
\end{align*}
where the second inequality comes from $\P(Y_t'  = n-1)G_1(0) \leq G_{k}(n-1)$ and the third inequality follows from $\sqrt{a} + \sqrt{b} \leq \sqrt{2} \sqrt{a+b}$, $a,b \geq0$.
To determine $\beta$ in \eqref{eq:binomineq} note that
\[
\binom{k-1}{n} p^n(1-p)^{k-1-n} \leq e^{\beta/2} \left(\binom{k-1}{n-1} p^{n-1}(1-p)^{k-n}\right)^{1/2} \left(\binom{k-1}{n+1} p^{n+1}(1-p)^{k-n-2}\right)^{1/2}
\]
is equivalent to
\[
\frac{1}{(n! (k-n-1)!)^2} \leq e^{\beta}  \frac{1}{(n-1)! (k-n)!}\frac{1}{(n+1)! (k-n-2)!}
\]
 which is equivalent to
 \[
 \frac{n+1}{n} \leq e^{\beta}  \frac{k-n-1}{k-n},\qquad \forall 1\leq n \leq k-2.
 \]
Observe that
 \[
\frac{ (n+1)(k-n)}{n(k-n-1)}  = 1 + \frac{k}{n(k-1)-n^2}.
 \]
 The minimum value of the function $n\mapsto n(k-1)-n^2$ on $\{1,\ldots, k-2\}$ is $k-2$ (reached at $1$ and $k-2$). So
 $\max_{1\leq n\leq k-2}\frac{ (n+1)(k-n)}{n(k-n-1)} = 1 + \frac{k}{k-2}= 2+ \frac{1}{k-2} \leq 3.$ Therefore, one can take $\beta = \log 3.$

(c) Finally, let us assume that $k\geq 2$ and $n=k-1$.
Let us admit for a moment that
\begin{equation}\label{eq:inegutile2}
\P(Y_t'  = k-1)G_1(0)  \leq  \left(\P(Y_t'  = k-1)G_1(1)\right)^{1/2} 
\left( \P(Y_t' = k-2 ) G_1(0) \right)^{1/2}.
\end{equation}
Then, inserting \eqref{eq:inegutile2} into \eqref{eq:inegutile}, and reasoning exactly as in the case (b) gives 
\[
G_k(k-1) \leq \sqrt{2} e^{\alpha/2} G_{k}(k)^{1/2} G_k(k-2)^{1/2}.
\]
To prove \eqref{eq:inegutile2}, first observe that $\P(Y_t' = k-1) = p^{k-1}$, $\P(Y_t' = k-2) = (k-1)p^{k-2}(1-p)$ and so $\P(Y_t'= k-1) \leq \frac{p}{1-p} \P(Y_t' = k-2).$ Since, $G_1(0) = (1-p)  e^{-\rho (1-p)}$ and $G_1(1)= \left((1-p) + p \frac{1}{\rho(1-p)} \right) (\rho (1-p)) e^{-\rho (1-p)}$, we see that
$G_1(0) = \frac{1}{(1-p)\rho + \frac{p}{1-p}} G_1(1).$ Therefore, 
\[
\P(Y_t'  = k-1)G_1(0) \leq  \frac{p}{(1-p)^2\rho + p}\P(Y_t'  = k-2)G_1(1) \leq \P(Y_t'  = k-2)G_1(1)
\]
which gives \eqref{eq:inegutile2}.

Putting everything together, one gets for all $k\geq 0$ and $n\geq 1$,
\[
\Delta \log G_k(n) \geq - \max(\alpha ;\beta)-\log2 \geq -\alpha - \beta -\log 2 =   \log \left( \frac{1}{12} \left(1 - \frac{p^2}{\left(\rho (1-p)^2 + p \right)^2}\right)  \right)
\]
which completes the proof.
\end{proof}

\subsection{Remarks on the action of the $M/M/\infty$ semi-group on structured functions}
In this section, we collect some more facts about the action of $P_t$ on log-convex (resp. log-concave) functions.
The first statement, which is a simple application of Cauchy-Schwarz inequality, asserts that if $f$ is log-semi-convex, then so is $P_t f$. 
The second statement is due to Johnson \cite{Joh07} and shows that $P_t$ also leaves stable the class of log-concave functions.
\begin{prop}
Let $f$ be a positive function on $\mathbb{N}$ such that, for some $\beta 
\geq 0$ and all $n=1,2\dots$, $\Delta \log f (n) \geq -\beta$. {Then}
$$
\Delta \log P_tf (n) \geq -\beta \qquad n=1,2,\dots, \quad t \geq 0 .
$$
\end{prop}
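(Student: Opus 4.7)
The plan is to exploit the ``Mehler''-type representation recalled in Section~\ref{Sec:notation}: $P_tf(n) = \mathbb{E}[f(Y_t^n + Z_t)]$ with $Y_t^n \sim \mathcal{B}(n,p)$ independent of $Z_t \sim \mathcal{P}(\rho q)$, where $p = p(t)$, $q=1-p$, $\rho = \lambda/\mu$. The heart of the argument is that one can extract two independent Bernoulli factors from the binomial so as to express $P_tf(n-1)$, $P_tf(n)$, $P_tf(n+1)$ simultaneously in terms of only three quantities, after which semi-log-convexity follows from Cauchy--Schwarz and an elementary manipulation.

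Concretely, I would fix $n \geq 1$ and realize $Y_t^n$ as $\epsilon_1 + \cdots + \epsilon_n$ with $\epsilon_i$ i.i.d.\ Bernoulli$(p)$. Setting $U := \epsilon_3 + \cdots + \epsilon_{n+1} + Z_t$ (with the convention that the $\epsilon$-sum is empty when $n=1$, so $U=Z_t$), the random variable $U$ is independent of $(\epsilon_1,\epsilon_2)$ and
\[
P_t f(n-1) \stackrel{d}{=} \mathbb{E}[f(U)], \qquad P_t f(n) \stackrel{d}{=} \mathbb{E}[f(U+\epsilon_1)], \qquad P_t f(n+1) \stackrel{d}{=} \mathbb{E}[f(U+\epsilon_1+\epsilon_2)].
\]
Introducing $\phi(k) := \mathbb{E}[f(U+k)]$ for $k=0,1,2$, this yields
\[
P_tf(n-1) = \phi(0), \quad P_tf(n) = (1-p)\phi(0) + p\phi(1), \quad P_tf(n+1) = (1-p)^2 \phi(0) + 2p(1-p)\phi(1) + p^2 \phi(2).
\]

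Next, I would observe that the assumption $\Delta \log f \geq -\beta$ (i.e.\ $f(k+1)^2 \leq e^{\beta} f(k)f(k+2)$ for every $k \geq 0$) is inherited by $\phi$ via Cauchy--Schwarz: since $U \geq 0$,
\[
\phi(1) = \mathbb{E}\bigl[f(U+1)\bigr] \leq e^{\beta/2}\,\mathbb{E}\Bigl[\sqrt{f(U)\,f(U+2)}\Bigr] \leq e^{\beta/2}\sqrt{\phi(0)\,\phi(2)},
\]
so that $\phi(1)^2 \leq e^{\beta}\phi(0)\phi(2)$. This is the only analytic step; the rest is purely algebraic.

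Finally, I would plug these formulas back and check
\begin{align*}
&P_tf(n+1)\,P_tf(n-1) - e^{-\beta}\,P_tf(n)^2 \\
&\qquad = (1-e^{-\beta})\bigl[(1-p)^2\phi(0)^2 + 2p(1-p)\phi(0)\phi(1)\bigr] + p^2\bigl[\phi(0)\phi(2) - e^{-\beta}\phi(1)^2\bigr] \geq 0,
\end{align*}
the first bracket being non-negative because $\beta \geq 0$ and $\phi(0),\phi(1) \geq 0$, and the second by the Cauchy--Schwarz estimate just established. Taking logarithms gives $\Delta \log P_tf(n) \geq -\beta$. There is no real obstacle here: the only subtlety is choosing the Bernoulli decomposition so that the three values $P_tf(n\pm 1), P_tf(n)$ share a common conditional structure, turning the global statement about the semigroup into a three-point Cauchy--Schwarz inequality applied to $\phi$.
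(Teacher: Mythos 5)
Your proof is correct and follows essentially the same route as the paper's: both decompose $Y_t^n$ into $n-1$ Bernoulli factors plus the Poisson (the paper's $X = \varepsilon_1+\dots+\varepsilon_{n-1}+Z$ is your $U$), both transfer the semi-log-convexity of $f$ to the three quantities $\phi(0),\phi(1),\phi(2)$ via the same Cauchy--Schwarz step, and both reduce to the same quadratic-form inequality in $p,q$. The only cosmetic difference is that you verify non-negativity of the difference $P_tf(n+1)P_tf(n-1)-e^{-\beta}P_tf(n)^2$ directly, whereas the paper lower-bounds the ratio $P_tf(n+1)P_tf(n-1)/P_tf(n)^2$ by $e^{-\beta}$; these are algebraically identical.
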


\begin{proof}
Recall that $P_tf(n)=\mathbb{E}(f(\varepsilon_1+\dots+\varepsilon_n+Z))$ with $\varepsilon_i \sim \mathcal{B}(p)$ i.i.d.\ and 
independent of $Z \sim \mathcal{P}(\rho q )$, $q=1-p$, and similarly for $P_tf(n-1)$ and $P_tf(n+1)$. Hence, 
computing the expectation with respect to the Bernoulli random variables $\varepsilon_n$ and $\varepsilon_{n+1}$ respectively, we have
\begin{align*}
P_tf(n+1) 
& =
p^2 \mathbb{E}(f(\varepsilon_1+\dots+\varepsilon_{n-1}+Z+2)) + 2p(1-p) \mathbb{E}(f(\varepsilon_1+\dots+\varepsilon_{n-1}+Z+1))  \\
& \quad + (1-p)^2 \mathbb{E}(f(\varepsilon_1+\dots+\varepsilon_{n-1}+Z)) 
\end{align*}
and
\begin{align*}
P_tf(n) 
& =
p \mathbb{E}(f(\varepsilon_1+\dots+\varepsilon_{n-1}+Z+1)) + (1-p) \mathbb{E}(f(\varepsilon_1+\dots+\varepsilon_{n-1}+Z))  .
\end{align*}
Letting $X:=\varepsilon_1+\dots+\varepsilon_{n-1}+Z$, we get
\begin{align*}
\Delta \log P_tf (n) 
&= 
\log \left( \frac{P_tf(n+1)P_tf(n-1)}{P_tf(n)^2}\right) \\
& =
\log \left( \frac{p^2 \mathbb{E}(f(X+2))\mathbb{E}(f(X)) + 2pq \mathbb{E}(f(X+1))\mathbb{E}(f(X)) + q^2 \mathbb{E}(f(X))^2}
{p^2 \mathbb{E}(f(X+1))^2 + 2pq \mathbb{E}(f(X+1))\mathbb{E}(f(X)) + q^2 \mathbb{E}(f(X))^2}\right) .
\end{align*}
Now, since $\Delta \log f \geq  -\beta$, we infer that $e^{-\beta/2}f(n) \leq  \sqrt{f(n+1)f(n-1)}$. Therefore, using the Cauchy-Schwarz Inequality,
$$
e^{-\beta}\mathbb{E}(f(X+1))^2 \leq  \mathbb{E}(\sqrt{f(X+2)f(X)})^2 \leq 
 \mathbb{E}(f(X+2)) \mathbb{E}(f(X)) .
$$
Hence
\begin{align*}
 p^2 \mathbb{E}(f(X+2)) & \mathbb{E}(f(X)) + 2pq \mathbb{E}(f(X+1))\mathbb{E}(f(X)) + q^2 \mathbb{E}(f(X))^2 \\
& \geq
p^2 e^{-\beta}\mathbb{E}(f(X+1))^2 + 2pq \mathbb{E}(f(X+1))\mathbb{E}(f(X)) + q^2 \mathbb{E}(f(X))^2 \\
& \geq 
e^{-\beta} \left( p^2 \mathbb{E}(f(X+1))^2 + 2pq \mathbb{E}(f(X+1))\mathbb{E}(f(X)) + q^2 \mathbb{E}(f(X))^2 \right)
\end{align*}
which leads to the desired result.
\end{proof}

Recall that a function $f : \N \to (0,+\infty)$ is said log-concave if $\Delta \log f(n) \leq 0$, for all $n\geq1$, or in other words if 
\[
f(n)^2 \geq f(n-1) f(n+1),\qquad \forall n\geq 1.
\] 
It is said ultra-log concave if $n\mapsto n! f(n)$ is log-concave, or equivalently
\[
f(n)^2 \geq \frac{n+1}{n} f(n-1)f(n+1),\qquad \forall n\geq 1.
\]
It is easily checked that $f$ is ultra-log-concave if $f/\pi_\theta$ is log-concave for some (and thus all) $\theta>0$.

The following result is due to Johnson \cite{Joh07}. 
\begin{thm}Let $(X_t)_{t\geq0}$ be the $M/M/\infty$ process with generator \eqref{eq:MMgen}, associated semi-group $(P_t)_{t\geq0}$ and reversible 
distribution $\pi_\rho.$
For all $t\geq 0$, denote by $h_t$ the distribution function of the law of $X_t$. If $h_0$ is ultra-log-concave, then for all $t>0$, $h_t$ is also 
ultra-log-concave. Equivalently, if $f_0 : \N \to (0,+\infty)$ is log-concave and integrable with respect to $\pi_\rho$ then, for all $t>0$, $f_t = P_t f$ is also log-concave.
\end{thm}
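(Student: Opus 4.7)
The plan is to translate the statement into a distributional form and then factor the $M/M/\infty$ evolution as a thinning followed by a Poisson convolution, both of which preserve ultra-log-concavity (ULC).

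First, since $n!\,\pi_\rho(n) = e^{-\rho}\rho^n$ is log-affine in $n$, a positive function $f$ on $\N$ is log-concave if and only if $h := f\pi_\rho$ is ultra-log-concave. By reversibility of $P_t$ with respect to $\pi_\rho$, the function $h_t := (P_t f_0)\pi_\rho$ solves the Kolmogorov forward equation, so after normalizing $\int f_0\, d\pi_\rho = 1$, $h_t$ is precisely the law of $X_t$ when $X_0 \sim h_0 := f_0 \pi_\rho$. It therefore suffices to show that the flow of laws $h_0 \mapsto h_t$ preserves ULC.

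Second, the Mehler-type identity $\mathcal{L}(X_t \mid X_0 = n) = \mathcal{B}(n,p(t)) * \mathcal{P}(\rho q(t))$ recorded in Section~\ref{Sec:notation} admits a pathwise reading that makes the relevant structure transparent: each of the $X_0$ initial particles survives up to time $t$ independently with probability $p(t)=e^{-\mu t}$, while the ``new'' particles alive at time $t$ form an independent $\mathcal{P}(\rho q(t))$ sample drawn from the arrival process. Unconditioning on $X_0$ yields
\[
h_t \;=\; T_{p(t)}(h_0) \,*\, \pi_{\rho q(t)},
\]
where $T_p$ is the $p$-thinning operator on probability measures on $\N$, i.e.\ $T_p(\nu)$ is the law of $\sum_{i=1}^{N} B_i$ with $N\sim\nu$ and $B_i$ i.i.d.\ $\mathcal{B}(p)$ independent of $N$.

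Third, three classical preservation results close the argument: (i) $\pi_\theta$ is ULC for every $\theta>0$, since $n!\,\pi_\theta(n)=\theta^n e^{-\theta}$ is log-affine in $n$; (ii) the $p$-thinning operator $T_p$ maps ULC distributions to ULC distributions; (iii) the convolution of two ULC distributions on $\N$ is ULC (Walkup/Liggett). Combining (i)--(iii) with the factorization above shows $h_t$ is ULC whenever $h_0$ is, and the first step converts this back into the claimed log-concavity of $P_t f_0$.

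The main obstacle is establishing (ii) and (iii): both are Newton-type discrete inequalities whose proofs are genuinely combinatorial and are \emph{not} accessible via the convex-combination trick used in Lemma~\ref{lem:combination}, since a positive combination of log-concave sequences is in general log-convex rather than log-concave. A self-contained derivation, following Johnson~\cite{Joh07}, would either invoke Liggett's theorem for (iii) and deduce (ii) from the compound Bernoulli representation of $T_p$, or verify the discrete second-order inequality for $n!\,h_t(n)$ directly by expanding the convolution and the thinning and grouping the resulting terms so that the log-concavity of $m!\,h_0(m)$ can be applied pointwise.
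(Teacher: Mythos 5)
Your proof is correct and follows essentially the same route as the paper's sketch: the reduction via reversibility to ULC preservation of the law of $X_t$, the factorization $\mathcal{L}(X_t) = T_{p(t)}(\mathcal{L}(X_0)) * \mathcal{P}(\rho q(t))$, and the invocation of Johnson's thinning result together with Walkup--Liggett for closure of ULC under convolution. The only (minor) addition you make is to flag explicitly that the positive-combination argument of Lemma~\ref{lem:combination} cannot substitute for these combinatorial ingredients, which is a sound observation but not part of the paper's proof.
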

We note that the preservation of ultra-log-concavity by the $M/M/\infty$ process was proved in \cite{Joh07} en route to proving the 
maximum entropy property of the Poisson distribution; related properties connected to Poisson and compound Poisson approximation
may also be found in \cite{JKM13, BJKM10}. For the sake of completeness, we briefly sketch Johnson's proof (see \cite{Joh07} for details).
\begin{proof}[Sketch of proof]
Fix some $t>0$. The proof relies on the following explicit representation 
of $X_t$:
\[
X_t = \sum_{k=1}^{X_0} \varepsilon_k + Z,
\]
where, as in Section \ref{Sec:notation}, the random variables $X_0$, $Z$, 
$\varepsilon_k$, $k\geq 1$, 
are independent, $Z$ has law $\mathcal{P}(\rho (1-p))$ and $\varepsilon_k$ has law $\mathcal{B}(p)$, $k\geq 1$, with $p=p(t) = e^{-\mu t}$. 
According to \cite[Proposition 3.7]{Joh07}, the random variable $ \sum_{k=1}^{X_0} \varepsilon_k$ 
(which corresponds to a thinning of $X_0$) has an ultra-log-concave distribution. 
On the other hand, it is easily checked that $Z$ has also an ultra-log-concave distribution. 
Since the class of ultra-log-concave functions is closed under convolution \cite{Wal76, Lig97, Gur09:1, NO12}, 
we conclude that the distribution function of $X_t$ is ultra-log-concave. 

Finally, observe that if $f_0 \in \mathbb{L}^1(\pi_\rho)$ is a log-concave function such that (without loss of generality) 
$\int f_0\,d\pi_\rho = 1$ and $X_0$ has distribution function $h_0=f_0\pi_\rho$, then $h_0$ is obviously ultra-log-concave 
and so, according to what precedes, the distribution function $h_t$ of $X_t$ is also ultra-log-concave. 
Since $P_t$ is reversible with respect to $\pi_\rho$, it holds $h_t = (P_tf_0) \pi_\rho$. And so $f_t=P_tf_0$ is log-concave.
\end{proof}

This result may be seen as a discrete analogue of the preservation of log-concavity by the heat flow (observed by Brascamp and Lieb \cite{BL76a})
and of strengthened versions of this property (observed by Ishige, Salani 
and Takatsu \cite{IST19}).

\subsection{Deviation bounds for log-convex functions - absence of deviation bounds for log-semi-convex functions}
In this section, we investigate deviation bounds of the type $\pi_\theta(\{n : f(n) \geq t \int f\,d\pi_\theta\})$ for 
log-convex, and more generally log-semi-convex, functions $f$.
In other words, we address the analogue of Problem (2) in the introduction for the Poisson distributions.
As our results will reveal, in this discrete setting, an analogue of Problem (2) does not hold in general, but it
does hold if $f$ is assumed to be log-convex.
One reason for this spurious effect is that the tail of the measure $\sum_{k \geq n} \pi_\theta(k)$, in discrete, is of the same order as $\pi_\theta(n)$, 
\textit{i.e.}, with no extra factor, while in the continuous,
$\int_s^\infty e^{-t^2}dt \sim_{s \to \infty} \frac{e^{-s^2}}{2s}$.

In all what follows, will make a frequent use of a non-asymptotic version 
of Stirling formula.
More precisely, the following inequalities for the factorial are known (see \cite{Rob55}) to hold
\[
\sqrt{2 \pi} n^{n+\frac{1}{2}} e^{-n + \frac{1}{12n+1}} < n! < \sqrt{2 \pi} n^{n+\frac{1}{2}} e^{-n + \frac{1}{12n}}, \qquad n \geq 1 .
\]
Hence, 
\begin{equation} \label{Stirling}
n^{n+\frac{1}{2}} e^{-n} \leq n! \leq 3 n^{n+\frac{1}{2}} e^{-n}
\end{equation}
for $n \geq 1$ (since $\sqrt{2 \pi}e^{1/12n} \leq 3$).

Let us begin with a precise tail bound for the Poisson distributions.
\begin{lem}\label{lem:Ptail}
Let $\theta>0$ and define $\Phi_\theta(x):=x \log x -x \log \theta - x + \theta$, $x \geq 1$. Set $F_\theta(u):=\pi_\theta( [u,\infty))$ for the tail of the distribution function of $\pi_\theta$.
For $u \geq 2\theta$, we have
\[
F_\theta(u) \leq \frac{2}{\sqrt{u}} \exp \left\{-\Phi_\theta(u) \right\}.
\]
\end{lem}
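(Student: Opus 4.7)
The plan is to reduce the tail sum to a single term by a geometric-series argument, and then bound that term by Stirling.

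First, I would exploit the ratio
\[
\frac{\pi_\theta(k+1)}{\pi_\theta(k)} = \frac{\theta}{k+1},
\]
which for $k+1 \geq u \geq 2\theta$ is at most $1/2$. Thus, writing $n_0 := \lceil u \rceil$, the tail
\[
F_\theta(u) = \sum_{k \geq n_0} \pi_\theta(k)
\]
is dominated by a geometric series of ratio $\leq 1/2$, giving $F_\theta(u) \leq 2\,\pi_\theta(n_0)$.

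Second, I would apply the lower Stirling bound in \eqref{Stirling} to $n_0!$:
\[
\pi_\theta(n_0) = \frac{e^{-\theta}\theta^{n_0}}{n_0!} \leq \frac{e^{-\theta}\theta^{n_0} e^{n_0}}{n_0^{n_0+1/2}} = \frac{1}{\sqrt{n_0}}\exp\bigl(-\Phi_\theta(n_0)\bigr).
\]
Since $n_0 \geq u$, the prefactor satisfies $1/\sqrt{n_0} \leq 1/\sqrt{u}$.

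Third, to pass from $\Phi_\theta(n_0)$ to $\Phi_\theta(u)$, I would observe that $\Phi_\theta'(x) = \log(x/\theta) \geq 0$ on $[\theta,\infty)$, so $\Phi_\theta$ is increasing there; since $u \geq 2\theta > \theta$ and $n_0 \geq u$, we obtain $\Phi_\theta(n_0) \geq \Phi_\theta(u)$, which yields the claimed bound
\[
F_\theta(u) \leq \frac{2}{\sqrt{u}}\exp\bigl(-\Phi_\theta(u)\bigr).
\]

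No step looks genuinely hard here: the only mild subtlety is keeping track of the ceiling $\lceil u \rceil$ when $u$ is not an integer, but the monotonicity of $\Phi_\theta$ on $[\theta,\infty)$ and the trivial comparison $\sqrt{\lceil u\rceil}\geq\sqrt{u}$ absorb it without loss. The essential structural input is simply the condition $u \geq 2\theta$, which is exactly what makes the ratio $\theta/(k+1)$ summable as a fast geometric series, and which also ensures $u$ lies in the monotone regime of $\Phi_\theta$.
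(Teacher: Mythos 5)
Your proof is correct and follows essentially the same route as the paper: factor out the leading term, bound the tail by a geometric series with ratio at most $1/2$ (using $u\geq 2\theta$), and then apply the lower Stirling bound \eqref{Stirling}. You are slightly more careful than the paper about the non-integer case, handling the ceiling $\lceil u\rceil$ explicitly via monotonicity of $\Phi_\theta$ and $\sqrt{\cdot}$, where the paper simply writes $u!$ and $\theta^u$ as if $u$ were an integer.
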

\begin{proof} If $u\geq 2\theta$, 
\begin{align*}
F_\theta(u) 
& = 
\sum_{k \geq u} \frac{\theta^k e^{-\theta}}{k!} 
= 
\frac{\theta^u e^{-\theta}}{u!}  \sum_{k \geq u}\frac{\theta^{k-u}}{k(k-1)\dots(u+1)} \\
& \leq 
\frac{\theta^u e^{-\theta}}{u!}  \sum_{k \geq u} 2^{u-k} 
 =
2 \frac{\theta^u e^{-\theta}}{u!} 
\leq 
\frac{2}{\sqrt{u}} \exp \left\{-\Phi(u) \right\} 
\end{align*}
where we used \eqref{Stirling}.
\end{proof}

\begin{prop}\label{prop:poi-lc}
For any $\theta>0$, there exists a constant $c$ that depends only on $\theta$ such that for all $t \geq 4$ 
and all  positive functions $f$ on the integers satisfying $\Delta \log f 
\geq 0$, we have
\[
\pi_\theta(\{f \geq t \int f d\pi_\theta  \}) \leq c\frac{\sqrt{\log \log 
t}}{t \sqrt{\log t}} .
\]
\end{prop}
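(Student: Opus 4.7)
The strategy will follow the same two-step template used in the continuous case (Lemma \ref{lem:technical} plus Theorem \ref{bis}): first establish a pointwise domination of log-convex densities by an explicit deterministic envelope, then convert that into a tail bound using the precise Poisson estimate of Lemma \ref{lem:Ptail}.

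First I would prove the pointwise bound: if $f \colon \N \to (0,\infty)$ satisfies $\Delta \log f \geq 0$ and $\int f\,d\pi_\theta = 1$, then $\log f(n) \leq \Phi_\theta(n)$ for all $n \in \N$, where $\Phi_\theta$ is as in Lemma \ref{lem:Ptail}. Set $\psi = \log f$; the hypothesis says that $\psi$ is a convex sequence on $\N$, so its affine interpolation between consecutive integers is a convex function on $[0,\infty)$, and in particular its biconjugate satisfies $\psi^{**}(n)=\psi(n)$ at every integer. Define the discrete Legendre transform $\psi^*(y) := \sup_{n \in \N}(ny - \psi(n))$ for $y \in \R$. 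For any $y$,
\[
1 = \sum_{n} e^{\psi(n)}\pi_\theta(n) \geq e^{-\psi^*(y)} \sum_{n} e^{ny}\pi_\theta(n) = e^{-\psi^*(y)}\,e^{\theta(e^y - 1)},
\]
hence $\psi^*(y) \geq \theta(e^y - 1)$. Taking the Legendre conjugate on both sides gives
\[
\psi(n) = \psi^{**}(n) \leq \sup_{y \in \R}\bigl(ny - \theta(e^y - 1)\bigr) = \Phi_\theta(n),
\]
the supremum being achieved at $y = \log(n/\theta)$ for $n \geq 1$ (and in the limit $y \to -\infty$ for $n = 0$, where $\Phi_\theta(0) = \theta$).

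Second I would convert this into the claimed tail bound. The function $\Phi_\theta$ is convex on $[0,\infty)$, vanishes at $n=\theta$, equals $\theta$ at $n=0$, and increases to $+\infty$ on $[\theta,\infty)$. For $t$ large enough that the unique root $u_t \in (\theta,\infty)$ of $\Phi_\theta(u_t) = \log t$ satisfies $u_t \geq 2\theta$, the pointwise bound and Lemma \ref{lem:Ptail} give
\[
\pi_\theta(\{f \geq t\}) \leq \pi_\theta(\{n \geq u_t\}) \leq \frac{2}{\sqrt{u_t}}\,e^{-\Phi_\theta(u_t)} = \frac{2}{t\sqrt{u_t}}.
\]
Since $\Phi_\theta(u) = u\log u - u(1+\log\theta) + \theta$ grows like $u\log u$, asymptotic inversion yields $u_t \geq c_\theta\,\log t/\log\log t$ for $t$ above some threshold $t_\theta$, and therefore
\[
\pi_\theta(\{f \geq t\}) \leq c'_\theta \,\frac{\sqrt{\log\log t}}{t\sqrt{\log t}}, \qquad t \geq t_\theta.
\]

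Finally, for $t$ in the bounded interval $[4, t_\theta]$, Markov's inequality gives $\pi_\theta(\{f \geq t\}) \leq 1/t$, and since $\sqrt{\log\log t}/(t\sqrt{\log t})$ is bounded below by a positive constant on this compact range, this crude estimate is absorbed into the final constant $c = c(\theta)$. The main technical point, and the step I expect to require the most care, is the quantitative inversion of $\Phi_\theta$ showing $u_t \asymp \log t/\log\log t$ with explicit control, together with the justification that piecewise-linear extension of a convex integer sequence is genuinely convex on $\R_+$ so that the discrete biconjugate identity $\psi^{**}(n)=\psi(n)$ is legitimate.
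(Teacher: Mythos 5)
Your proof is correct and follows essentially the same path as the paper's: convexity of $\log f$ (via piecewise-linear extension / discrete Legendre duality) gives the pointwise envelope $\log f(n)\le\Phi_\theta(n)$ after comparing with the normalization constraint, and then Lemma \ref{lem:Ptail} together with the inversion $\Phi_\theta^{-1}(x)\gtrsim x/\log x$ yields the tail bound. The only cosmetic differences are that you take the Legendre conjugate over all $y\in\R$ rather than $y\ge0$ as in the paper (which is equally valid, since the Fenchel--Young step and the normalization inequality hold for every $y$), and that you make the Markov step for the bounded range $t\in[4,t_\theta]$ explicit, which the paper leaves implicit.
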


\begin{proof}
We assume without loss of generality that $\int f d\pi_\theta = 1$ and we follow \cite{GMRS17}. Define $\widetilde{f} \colon [0,\infty) \to (0,\infty)$ as the piecewise linear interpolation of $f$. Since  $\Delta \log 
f \geq 0$, $\log \widetilde{f}$ is convex so that
$\log \widetilde{f}(x)=\sup_{y \geq 0}\{xy-\widetilde{g}(y)\}$, $x \geq 
0$, where $\widetilde{g}(y)=(\log \widetilde{f})^*(y)
= \sup_{x \geq 0}\{yx-\log \widetilde{f}(x)\}$, $y\geq 0$, is the Legendre transform of $\log \widetilde{f}$.
Then, since, for any $n \in \mathbb{N}$ and any $y\geq 0$, $\log f(n) = 
\log \widetilde{f}(n) \geq  ny-\widetilde{g}(y)$, we have
\[
1 = 
\int f d\pi_\theta 
\geq 
e^{-\widetilde{g}(y)} \int e^{ny} d\pi_\theta(n) = \exp\{ -\widetilde{g}(y) +\theta(e^y-1) \} .
\]
Therefore
\[
\widetilde{g}(y) \geq  \theta(e^y-1),\qquad y\geq0,
\]
and in turn 
\begin{align*}
\log f(n) 
& = 
\log \widetilde{f}(n) \leq \sup_{y\geq 0} \{ny - \theta(e^y-1) \}
=
\begin{cases}
n(\log n - \log \theta) - n + \theta & \mbox{if } n \geq \theta \\
0 & \mbox{if } n <  \theta.
\end{cases} \\
& =
\max[n(\log n - \log \theta) - n+\theta, 0] .
\end{align*}
Hence, for $t \geq e^{\theta-1}/\theta$, 
\begin{align*}
\pi_\theta(\{f \geq t \}) 
& \leq 
\pi_\theta(\{ \max(n(\log n - \log \theta) - n+\theta,0) \geq \log t \}) \\
& = 
\pi_\theta(\{n(\log n - \log \theta) - n + \theta \geq \log t \}) = \pi_\theta(\{\Phi_\theta(n) \geq \log t \}) \\
& =
\pi_\theta (\{n \in \mathbb{N}: n \geq \Phi_{\theta}^{-1}(\log t) \})
\end{align*}
where we set $\Phi_\theta(x):=x \log x -x \log \theta - x + \theta$, $x 
\geq 1$ and denoted by $\Phi_\theta^{-1}$ its inverse function which is increasing on $[\theta-1-\log \theta,\infty)$. 
Using Lemma \ref{lem:Ptail}, we get for $t \geq c_\theta$, for some constant depending only on $\theta$,
\[
\pi_\theta (\{n \in \mathbb{N}: f(n) \geq t \}) 
\leq
2 \frac{e^{-\Phi_\theta(\Phi_\theta^{-1}(\log t))}}{\sqrt{\Phi_\theta^{-1}(\log t)}}
=
\frac{2}{t \sqrt{\Phi_\theta^{-1}(\log t)}} .
\]
To end the proof it suffices to observe that
$\Phi_\theta(x/\log x) = x - x[\log \log x + \log \theta + 1]/\log x + \theta \leq x$ for $x$ large enough so that
$\Phi_\theta^{-1}(x) \geq   x/\log x$ (for $x$ large).
\end{proof}

\begin{rem}\label{rem:UPEMLV}
Let us note that the bound in Proposition~\ref{prop:poi-lc} is of optimal 
order.
Indeed, consider the function $f_\lambda(n)=e^{\lambda n}c(\lambda)$, where $c(\lambda)=\exp\{1-e^\lambda\}$, $\lambda\geq0$,
is taken to be the normalizing constant that makes $\int f_\lambda d\pi_1 
 =1$.
Observe that $\Delta \log f_\lambda =0$ since $\log f_\lambda$ is linear.
Now 
\[
\pi_1 ( \{f_\lambda \geq t\} )= \pi_1 \left(\left[\frac{1}{\lambda} \log \left(\frac{t}{c(\lambda)}\right),\infty \right) \right).
\]
We are interested in lower bounds on this Poisson tail.  Let us take $\lambda = \log k$ and  $t= ek^ke^{-k}$, for some integer $k$, so that $\frac{1}{\lambda} \log \left(\frac{t}{c(\lambda)}\right)= k.$ Observe that, using \eqref{Stirling}, 
\[
\pi_1 ([k,+\infty)) \geq \frac{1}{ek !} \geq \frac{1}{3e}k^{-k-\frac{1}{2}}e^k.
\]
Therefore, after some calculations, we get
\[
\frac{t \sqrt{\log t}}{\sqrt{\log \log t}} \pi_1(\{f_\lambda \geq t  \}) \geq  \frac{1}{3} \left(\frac{1+k\log k-k}{k\log(1+k\log k-k)}\right)^{1/2}
\]
and the right hand side goes to $1/3$ as $k \to \infty$, which proves optimality.
\end{rem}

The next proposition goes in the opposite direction to Proposition~\ref{prop:poi-lc}. It states that the log-semi-convex 
property is not enough to ensure a deviation bound better than just Markov's inequality. 
In what follows, $\theta>0$ is fixed and we define for all $\beta\geq0$
\[
\mathcal{F}_\beta :=\{ f\colon \mathbb{N} \to \mathbb{R} \mbox{ such that } \Delta \log f \geq - \beta \mbox{ and } \int f d\pi_\theta=1 \} .
\]
\begin{prop} \label{prop:ce-mmi}
For all $\beta >0$, the following holds
\[
\limsup_{t \to \infty} t \sup_{f \in \mathcal{F}_\beta} \pi_\theta(\{ n : 
f(n) \geq t\}) >0 .
\]
\end{prop}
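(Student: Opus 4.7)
The plan is to construct, for each large integer $N$, a function $f_N\in\mathcal{F}_\beta$ and a level $t_N\to\infty$ such that the product $t_N\,\pi_\theta(\{f_N\geq t_N\})$ remains bounded below by a positive constant. Motivated by the observation that the constraint $\Delta \log f=-\beta$ is saturated by a discrete Gaussian, I would set
\[
f_N(n):=\frac{1}{Z_N}\exp\bigl(-\beta (n-N)^2/2\bigr),\qquad Z_N:=\sum_{n\geq 0}e^{-\beta(n-N)^2/2}\,\pi_\theta(n).
\]
A direct computation gives $\Delta\log f_N\equiv -\beta$ on $\mathbb{N}\setminus\{0\}$ and $\int f_N\,d\pi_\theta=1$, so $f_N\in\mathcal{F}_\beta$.

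The heart of the argument is a Laplace-type estimate for $Z_N$. Writing $L:=\log(N/\theta)$ and using the elementary bounds $\pi_\theta(N-j)/\pi_\theta(N)=\prod_{i=0}^{j-1}(N-i)/\theta\leq(N/\theta)^j$ for $1\leq j\leq N$, and $\pi_\theta(N+k)/\pi_\theta(N)\leq(\theta/N)^k$ for $k\geq 0$, one reduces $Z_N/\pi_\theta(N)$ to a shifted Gaussian sum. The algebraic identity
\[
-\frac{\beta j^2}{2}+jL=-\frac{\beta}{2}\bigl(j-L/\beta\bigr)^2+\frac{L^2}{2\beta}
\]
locates the saddle point at $j^\star=L/\beta$ and yields $Z_N\leq C_\beta\,\pi_\theta(N)\,e^{L^2/(2\beta)}$ for some constant $C_\beta$ depending only on $\beta$. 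I then choose $m_N$ to be the nearest integer to $L/\beta$, so that $|m_N-L/\beta|\leq 1/2$, and set $t_N:=f_N(N-m_N)=Z_N^{-1}\,e^{-\beta m_N^2/2}$; by construction $N-m_N\in\{f_N\geq t_N\}$. Since $m_N=O(\log N)\ll N$, the same product formula gives $\pi_\theta(N-m_N)\geq(1-o(1))\,\pi_\theta(N)\,(N/\theta)^{m_N}$.

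Combining these estimates and reusing the completed-square identity once more,
\[
t_N\,\pi_\theta(\{f_N\geq t_N\})\;\geq\; t_N\,\pi_\theta(N-m_N)\;\geq\;\frac{1-o(1)}{C_\beta}\,e^{-\beta(m_N-L/\beta)^2/2}\;\geq\;\frac{e^{-\beta/8}}{2C_\beta},
\]
which is bounded away from zero, and Stirling applied to $\pi_\theta(N)^{-1}$ shows $\log t_N\gtrsim N\log N$, so $t_N\to\infty$. Taking the $\limsup$ along $(t_N)$ gives the proposition. The main obstacle is the Laplace bound on $Z_N$: the summand $k\mapsto e^{-\beta k^2/2}\pi_\theta(N+k)$ is peaked not near $k=0$ but at $k\approx -L/\beta$, a saddle point drifting to $-\infty$ as $N\to\infty$. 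It is precisely the value $e^{L^2/(2\beta)}$ at this saddle that cancels the exponential growth of $\pi_\theta(N-m_N)/\pi_\theta(N)$ and produces the uniform lower bound; the integer rounding of $L/\beta$ costs only the harmless factor $e^{-\beta/8}$ absorbed into the constant.
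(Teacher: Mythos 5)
Your argument is correct, and it uses the same core idea as the paper: take the discrete Gaussian test functions $f_a(n)=e^{-\beta(n-a)^2/2}/Z(a)$, lower-bound $\pi_\theta(\{f_a\geq t\})$ by the mass of the single point where $e^{-\beta(n-a)^2/2}\pi_\theta(n)$ is (near-)maximal, and show that this mode already carries a constant fraction of $Z(a)$. The implementations differ, though. The paper works with general real $a$, defines $\Psi_a(u)=-\tfrac{\beta}{2}(u-a)^2-\log\Gamma(u+1)+u\log\theta-\theta$, uses strict concavity ($\Psi_a''\leq -\beta$) and a Taylor expansion at the unique maximizer $u_a$ to get $Z(a)^{-1}\geq c_\beta e^{-\Psi_a(u_a)}$, and then needs a separate continuity/digamma argument to show that the set of $a$ for which $u_a$ is an integer is unbounded. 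You instead fix $a=N\in\mathbb{N}$, round the approximate saddle $L/\beta$ with $L=\log(N/\theta)$ to the nearest integer $m_N$, and bound $Z_N$ by hand through explicit Poisson-ratio products and a completed square, paying only a uniform $e^{-\beta/8}$ for the rounding. This avoids the paper's claim about $\mathcal{A}=\{a:u_a\in\mathbb{N}\}$ entirely, at the cost of somewhat more explicit bookkeeping (the $(1-o(1))$ on $\pi_\theta(N-m_N)/\pi_\theta(N)(N/\theta)^{m_N}$ uses $m_N=O(\log N)$, which you correctly note). Both arrive at the same uniform lower bound and the same verification that $t_N\to\infty$ via Stirling; your version is arguably more elementary and self-contained, while the paper's is slicker once one is willing to appeal to properties of the digamma function.
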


\begin{proof}
For  $a \geq 0$, define $f_a$ as
\[
f_a (n)=\exp\{-\frac{\beta}{2}(n-a)^2 + Z(a)\}, 
\quad  n \in \mathbb{N}, 
 \mbox{ with } 
Z(a):= - \log \int \exp\{-\frac{\beta}{2}(n-a)^2\} d\pi_\theta(n) 
\]
so that $\int f_a d\pi_\theta =1$. Moreover 
\[
\Delta \log f (n) = -\frac{\beta}{2} \left( (n+1-a)^2 + (n-1-a)^2 -2(n-a)^2 \right)  = - \beta .
\]
Hence, for all $a \geq 0$, $f_a \in \mathcal{F}_\beta$. The expected result will follow if we are able to prove that there exists
$T \colon [0,\infty) \to \mathbb{R}^+$ with $T(a) \to \infty$ as $a \to \infty$ such that
\begin{equation} \label{eq:T(a)}
\limsup_{a \to \infty} T(a) \pi_\theta(\{f_a \geq T(a)\}) > 0 
\end{equation}
since clearly
$\limsup_{a \to \infty} T(a) \pi_\theta(\{f_a \geq T(a)\})  \leq \limsup_{t \to \infty} t \sup_{f \in \mathcal{F}_\beta} \pi_\theta(\{f \geq t\})$.

Set $\Psi_a \colon \mathbb{R}^+ \to \mathbb{R}$, $u \mapsto -\frac{\beta}{2}(u-a)^2 - \log \Gamma(u+1) + u\log \theta - \theta$
where $\Gamma(z):= \int_0^\infty t^{z-1}e^{-t} dt$, $z >0$, is the Gamma functional. It is well known that $\log \Gamma$ is convex on $(0,\infty)$ so that $\Psi_a$ is strictly concave on $\mathbb{R}^+$. Since $\lim_{u 
\to \infty} \Psi_a(u)=-\infty$, this guarantees that  $\Psi_a$ has a unique maximum  on $\mathbb{R}^+$ achieved  at a (unique) point we denote by $u_a \in [0,\infty)$.

We claim that $\mathcal{A}:=\{a \geq 1 \mbox{ such that } u_a \in \mathbb{N} \}$ is infinite and unbounded and $u_a \to +\infty$, as $a \in \mathcal{A}$ tends to $+\infty$. We postpone the proof of the claim and continue with the proof of \eqref{eq:T(a)}.


Set, for $a \in \mathcal{A}$,
\[
T(a) := \exp\left(-\frac{\beta}{2}(u_a-a)^2 + Z(a)\right) .
\]
Now we observe that
\begin{align*}
\pi_\theta(\{f_a \geq T(a)\}) 
& =
\pi_\theta\left(\left \{n : -\frac{\beta}{2}(n-a)^2 \geq -\frac{\beta}{2}(u_a-a)^2  \right\}\right) 
 \geq \pi_\theta(u_a)
=
\frac{\theta^{u_a}e^{-\theta}}{u_a!} .
\end{align*}
Therefore, since $u_a ! = \Gamma(u_a+1)$ for $a \in \mathcal{A}$, 
\[
T(a) \pi_\theta(\{f_a \geq T(a)\})  
\geq 
\exp\{ \log(T(a)) - \log (u_a!) + u_a \log \theta - \theta\} 
= 
\exp\{\Psi_a(u_a) + Z(a)  \} . 
\]
Our aim is to bound from below the right hand side of the latter. We notice that, by definition of $\Psi_a$
and since $n!=\Gamma(n+1)$,
\begin{align*}
\int \exp\{-\frac{\beta}{2}(n-a)^2\} d\pi_\theta(n)
& =
\sum_{n = 0}^\infty \exp\{ -\frac{\beta}{2}(n-a)^2 - \log (n!) +n \log \theta  - \theta \} \\
=
& \sum_{n = 0}^\infty \exp\{ \Psi_a(n) \} .
\end{align*}
Since $\Psi_a'' \leq -\beta$ and $\Psi_a'(u_a)=0$  we have
\[
\Psi_a(n) 
\leq 
\Psi_a(u_a)  + \Psi_a'(u_a)(n-u_a) - \frac{\beta}{2}(n-u_a)^2
=
\Psi_a(u_a) - \frac{\beta}{2}(n-u_a)^2 .
\]
Hence
\[
\int \exp\{-\frac{\beta}{2}(n-a)^2\} d\pi_\theta(n)
\leq 
e^{\Psi_a(u_a)} \sum_{n = 0}^\infty \exp\{ - \frac{\beta}{2}(n-u_a)^2  \}
\leq 
2e^{\Psi_a(u_a)} \sum_{n = 0}^\infty e^{ - \beta n^2/2} .
\]
Setting $c_\beta = -\log \left( 2\sum_{n = 0}^\infty e^{ - \beta n^2/2} \right) $, one gets 
\begin{equation} \label{eq:za}
Z(a) 
= 
- \log \int \exp\{-\frac{\beta}{2}(n-a)^2\}) d\pi_\theta(n)
\geq 
c_\beta - \Psi_a(u_a) .
\end{equation}
The latter implies two useful conclusions. First, for all $a \in \mathcal{A}$, 
\[
T(a) \pi_\theta(\{f_a \geq T(a)\})  
\geq 
\exp\{\Psi_a(u_a) + Z(a)\}
\geq 
e^{c_\beta} .
\]
Second, $\log T(a) = -\frac{\beta}{2}(u_a-a)^2 + Z(a) \geq c_\beta + \log (u_a !) - u_a \log \theta + \theta \to \infty$ as $a \in \mathcal{A}$ tends to infinity.

The desired conclusion follows as soon as we prove the claim above. The equation $\Psi_a'(u_a)=0$ shows that the map 
$a \mapsto u_a$ is continuous. Hence the claim will follow if we can prove that $u_a \to \infty$ as $a$ goes to infinity.
We observe that $\Psi'_a(u)=-\beta(u-a)-\psi(u+1) + \log \theta$ where $\psi(u):=\Gamma'(u)/\Gamma(u)$ is the digamma function, which is increasing on $[1,\infty)$. The following asymptotic is known, $\psi(u) = \log u + o(1)$, as $u$ tends to infinity. Therefore $\Psi_a'(\sqrt{a}) \geq 
\beta(a - \sqrt{a}) - \log (\sqrt{a}) + c > 0$ for $a$ large enough. In particular, for $a$ large enough, $u_a \geq \sqrt{a}$ which proves the claim.
\end{proof}



\subsection{The Talagrand Regularization effect} \label{sec:tal-M/M/infty}

In this section we will prove Talagrand's regularization effect for the $M/M/\infty$ queuing process. This is one of the main result of this paper. We will use the strategy of the uniform bound on $P_t$ presented in the Introduction.
Recall that $\rho=\lambda/\mu$ and that the $M/M/\infty$ semi-group $(P_t)_{t \geq 0}$ is reversible with respect to the Poisson measure $\pi_\rho$ of parameter $\rho$. 
For simplicity we will assume from now on that $\rho=1$. All the results below remain valid for any $\rho>0$, but at the price of more 
technicalities in the proofs, non essential for the purpose of the whole paper. As a motivation, it should be noticed that the 
$M/M/\infty$ semi-group enjoys some sort of hypercontractivity property, see \cite[Section 7]{BT06} (cf. \cite{BHP07, KM06, NPY20}).
Hence the question raised by Talagrand about the regularization property of the semi-group for functions in $\mathbb{L}^1$ 
makes perfect sense. Here is a positive answer.

\begin{thm}[Talagrand's regularization effect for the $M/M/\infty$ queuing process]\label{thm:talagrandmmi}
Let $(P_t)_{t \geq 0}$ be the $M/M/\infty$ semi-group (with $\rho=1$). Then, for every $s >0$, 
there exists a constant $c$  (that depends only on $s$) such that, for all $t \geq 4$,
\[
\sup_{ f \geq 0 : \int f d\pi_1 =1} \pi_1 \left( \{n : P_sf(n) \geq t \} \right) \leq \frac{c\sqrt{\log \log t}}{t \sqrt{\log t}} .
\]
\end{thm}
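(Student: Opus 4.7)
The plan is to carry out the ``strategy of the supremum'' outlined in the introduction for the hypercube, but now adapted to the discrete kernel of the $M/M/\infty$ process.

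First, I would set $K_s(n,k) := \P(X_s = k \mid X_0 = n)$ and $\widetilde{k}_s(n,k) := K_s(n,k)/\pi_1(k)$. For any non-negative $f$ with $\int f\,d\pi_1 = 1$,
\[
P_s f(n) = \sum_{k \in \N} f(k)\,\widetilde{k}_s(n,k)\,\pi_1(k) \leq K(n) := \sup_{k \in \N} \widetilde{k}_s(n,k),
\]
so the problem reduces to estimating $\pi_1(\{K \geq t\})$.

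The main step is to bound $K(n)$ sharply. Reversibility of $(P_s)$ with respect to $\pi_1$ gives $\widetilde{k}_s(n,k) = K_s(k,n)/\pi_1(n)$, hence
\[
K(n) = e\cdot n!\cdot \sup_{k \in \N}\P(X_s = n \mid X_0 = k).
\]
Now $X_s \mid X_0 = k$ has probability generating function $\E(z^{X_s}\mid X_0 = k) = (q+pz)^k\,e^{-q(1-z)}$ with $p=e^{-\mu s}$, $q = 1-p$. Cauchy's coefficient formula on $|z|=1$ together with the identity $|q+pe^{i\theta}|^2 = 1 - 2pq(1-\cos\theta)$ yields
\[
\P(X_s = n \mid X_0 = k) \leq \frac{1}{2\pi}\int_0^{2\pi}(1 - 2pq(1-\cos\theta))^{k/2}\,e^{-q(1-\cos\theta)}\,d\theta,
\]
and a Laplace-method expansion around $\theta=0$ (which is the unique maximum of the integrand) produces a bound of order $1/\sqrt{kp+1}$. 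For $k \leq n/(2p)$ the mean $kp+q$ of $X_s$ is well below $n$, so an elementary large-deviation estimate for $\mathcal{B}(k,p)\star\mathcal{P}(q)$ gives the same probability as $O(e^{-c_s n})$. Combining the two regimes,
\[
\sup_{k\in\N} K_s(k,n) \leq \frac{C_s}{\sqrt{n+1}}, \qquad \text{and therefore} \qquad K(n) \leq \frac{C_s\, n!}{\sqrt{n+1}},
\]
with $C_s$ depending only on $s$.

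Finally, I would convert this into a Poisson tail. Let $n_t$ be the smallest integer with $C_s n_t!/\sqrt{n_t+1} \geq t$. Then $\{K\geq t\} \subset [n_t,\infty)$, and the non-asymptotic Stirling bounds~\eqref{Stirling} imply simultaneously that $n_t \geq c_s \log t/\log\log t$ (so $n_t \geq 2$ for $t \geq 4$) and that $\Phi_1(n_t) = n_t\log n_t - n_t + 1 \geq \log t - O(1)$. Applying Lemma~\ref{lem:Ptail} with $\theta=1$ and $u = n_t$,
\[
\pi_1([n_t,\infty)) \leq \frac{2}{\sqrt{n_t}}\,e^{-\Phi_1(n_t)} \leq \frac{c_1}{t\sqrt{n_t}} \leq c\,\frac{\sqrt{\log\log t}}{t\sqrt{\log t}},
\]
which is the asserted bound.

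The principal technical hurdle is the second step: the bare reversibility estimate $\widetilde{k}_s(n,k) \leq 1/\pi_1(n) = en!$ alone only yields the rate $1/t$, and it is precisely the extra factor $1/\sqrt{n}$ extracted by the local-CLT/Cauchy--Laplace argument that, after inversion through Stirling, delivers the $\sqrt{\log\log t/\log t}$ improvement demanded by the statement. Tracking this factor and ensuring that the resulting constant depends only on $s$ is the delicate quantitative point.
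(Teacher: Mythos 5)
Your proposal follows exactly the same overall architecture as the paper's proof: the strategy of the supremum reduces the problem to bounding $\pi_1(\{K\geq t\})$ with $K(n)=\sup_k \P(X_s=k\mid X_0=n)/\pi_1(k)$; reversibility rewrites this as $e\,n!\cdot\sup_k\P(X_s=n\mid X_0=k)$; the heart of the matter is the $1/\sqrt n$ bound on $\sup_k\P(X_s=n\mid X_0=k)$; and the final $\sqrt{\log\log t}/(t\sqrt{\log t})$ rate comes from Stirling inversion combined with the Poisson-tail estimate of Lemma~\ref{lem:Ptail}. Your closing remark about why the extra $1/\sqrt n$ factor (on top of the bare reversibility bound $en!$) is exactly what buys the $\sqrt{\log\log t/\log t}$ improvement is correct and is also the right heuristic for why Lemma~\ref{lem:preparation} is the crux.

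Where you diverge from the paper is in how the $1/\sqrt{n}$ estimate is established. The paper (Lemma~\ref{lem:preparation} together with Lemma~\ref{binom}) proceeds by a clean case split at $k=n$: for $k\leq n-1$ it uses reversibility once more to write $\P(X_s=n\mid X_0=k)\leq k!/n!\leq 1/n$, and for $k\geq n$ it bounds the convolution $\sum_i\P(Y_{k,s}=i)\P(Z_s=n-i)$ by the binomial mode, which is $O(1/\sqrt k)\leq O(1/\sqrt n)$ via the non-asymptotic Stirling bound~\eqref{Stirling}. Your route replaces this with a Cauchy/Fourier coefficient bound for the PGF $(q+pz)^k e^{-q(1-z)}$, a Laplace-method expansion at $\theta=0$ giving $O(1/\sqrt{kp+1})$ for general $k$, and a Chernoff/large-deviation bound $O(e^{-c_s n})$ when $k\leq n/(2p)$. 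The content is equivalent and both give uniform constants depending only on $s$, but your approach is heavier analytic machinery (you would need to make the Laplace step uniform in $k$ and control the tail of the integral away from $\theta=0$, and supply an explicit Chernoff bound for the binomial-plus-Poisson convolution), whereas the paper's argument is entirely elementary, relying only on factorial ratios and the mode of the binomial. A modest advantage of your Fourier/local-CLT version is that it is perhaps more portable to other semigroups without a convenient binomial-mode structure, but in this specific setting the paper's version is shorter and avoids the need for any asymptotic analysis.
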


\begin{rem}
For any fixed $s>0$, this bound is optimal for large values of $t$. Indeed, using the notation of Remark \ref{rem:UPEMLV}, it easily seen that $P_s f_\lambda = f_{\lambda(s)}$, with $\lambda(s) = \log(1+e^{-s}( e^\lambda-1) )$. According to Remark \ref{rem:UPEMLV}, the deviation bound of 
Proposition \ref{prop:poi-lc} is optimal for the family $(f_\lambda)_{\lambda>0}$. Therefore the deviation bound of Theorem \ref{thm:talagrandmmi} 
is also optimal.
\end{rem}

The proof of the theorem is based on an estimate on the following quantity 
\[
\Psi_s(n):= \frac{1}{n!} \sup_{k \geq0} \frac{\mathbb{P}(Y_{n,s} + Z_s = k) }{\pi_1(k)},\qquad s\geq0
\]
where $Y_{n,s}$ is a binomial variable of parameter $n$, $p_s=e^{-t}$ and $Z_s$ is a Poisson variable of parameter $q_s=1-p_s$. 

\begin{lem} \label{lem:preparation}
For all $s>0$, there exists a constant $c$ (that depends only on $s$) such that for any $n \geq 1$,
$\Psi_s(n) \leq \frac{c}{\sqrt{n}}$.
\end{lem}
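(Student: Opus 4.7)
The plan is to exploit the reversibility of the semigroup $(P_s)$ with respect to the Poisson measure $\pi_1$. Detailed balance gives $\pi_1(k)\,\mathbb{P}(X_s = n \mid X_0 = k) = \pi_1(n)\,\mathbb{P}(X_s = k \mid X_0 = n)$, hence
\[
\frac{\mathbb{P}(Y_{n,s} + Z_s = k)}{\pi_1(k)} = \frac{\mathbb{P}(Y_{k,s} + Z_s = n)}{\pi_1(n)}.
\]
Plugging this into the definition of $\Psi_s(n)$ and using $\pi_1(n) = e^{-1}/n!$, the claim reduces to the uniform one-point bound
\[
\sup_{k \geq 0} \mathbb{P}(Y_{k,s} + Z_s = n) \leq \frac{c'}{\sqrt{n}},
\]
which is an anti-concentration estimate for the convolution of a binomial (with parameters $k$ and $p=e^{-s}$) and an independent Poisson (with intensity $q=1-p$), both $p,q$ depending only on $s$ and lying in $(0,1)$.

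I would then split according to the size of $k$. In the small range $k \leq n/2$, one has $Y_{k,s} \leq n/2$ deterministically, so $\{Y_{k,s}+Z_s=n\} \subset \{Z_s \geq n/2\}$. Since $q<1$, the ratio test gives $\mathbb{P}(Z_s\geq M)\leq 2q^M/M! \leq 2/M!$ for $M\geq 1$, so the Poisson tail at $M=\lceil n/2\rceil$ decays super-exponentially in $n$ and is certainly $o(1/\sqrt{n})$. In the large range $k > n/2$, I use the trivial convolution bound
\[
\mathbb{P}(Y_{k,s}+Z_s=n) \leq \sup_j \mathbb{P}(Y_{k,s}=j),
\]
and invoke the non-asymptotic Stirling inequality \eqref{Stirling} at the binomial mode $m=\lfloor kp\rfloor$. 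The target estimate is
\[
\binom{k}{m}p^m q^{k-m} \leq \frac{C}{\sqrt{kpq}},
\]
which combined with $k>n/2$ and $p,q>0$ fixed gives the desired $O(1/\sqrt{n})$ bound. Finitely many small values of $n$ (including the regime $kp<2$ where the Stirling estimate is not directly useful) are handled by choosing the constant $c$ large enough so that $c/\sqrt{n}\geq 1$ automatically.

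The main technical step is the Stirling estimate at the binomial mode, uniform in $k$. Writing $m=\lfloor kp\rfloor$, the elementary bound $\log(1+x)\leq x$ yields $m\log(kp/m)\leq kp-m <1$, and similarly $(k-m)\log(kq/(k-m))\leq kq-(k-m)\leq 1$, so the ``shape'' factor $(kp/m)^m(kq/(k-m))^{k-m}$ is at most $e^2$. Applying \eqref{Stirling} to the factorials in $\binom{k}{m}$ then leaves exactly the factor $\sqrt{k/(m(k-m))}$, which is $\leq \sqrt{2/(kpq)}$ as soon as $kp\geq 2$. Once this Stirling step is in hand, assembling the two ranges of $k$ into the final bound is straightforward.
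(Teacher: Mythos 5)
Your proposal is correct and follows essentially the same route as the paper: reduce via reversibility to the anti-concentration bound $\sup_k\mathbb{P}(Y_{k,s}+Z_s=n)=O(1/\sqrt n)$, split according to whether $k$ is small or large relative to $n$, and use Stirling at the binomial mode (this is exactly the paper's Lemma~\ref{binom}) in the large-$k$ regime. The only divergence is in the small-$k$ regime: you split at $k\le n/2$ and bound via the Poisson tail $\mathbb{P}(Z_s\ge n/2)$, whereas the paper splits at $k\le n-1$ and observes, again from reversibility, that $\mathbb{P}(X_s=n\mid X_0=k)\le k!/n!\le 1/n$, which is a touch shorter and avoids any tail estimate; both give more than the required $O(1/\sqrt n)$. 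Your handling of the regime $kp<2$ (equivalently $k$, hence $n$, bounded by a constant depending only on $s$) by enlarging $c$ is fine since $\Psi_s(n)\le e$ always after the reversibility step.
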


\begin{rem}
We observe that $1/\sqrt n$ is the correct order.
Indeed, assume that $ne^s \in \mathbb{N}$. Considering the special case $k=n/p_s \in \mathbb{N}$ and then the sole term $j=n$ in the sum
we get
\begin{align*}
\Psi_s(n) 
& \geq 
\frac{1}{n!} \frac{\mathbb{P}(Y_{n,s} + Z_s = n/p_s) }{\pi_1(n/p_s)}
=
e^{1-q_s} \frac{(n/p_s)!}{n!} \sum_{j=0}^{n}  \genfrac{(}{)}{0pt}{}{n}{j} \frac{p_s^j q_s^{n+(n/p_t)-2j}}{((n/p_s)-j)!} \\
& \geq 
e^{p_s}\frac{(n/p_s)!}{n!(nq_s/p_s)!} p_s^n q_s^{nq_s/p_s} .
\end{align*}
Therefore, using \eqref{Stirling}, we have
\begin{align*}
\log \Psi_s(n) 
& \geq 
p_s +\left(\frac{n}{p_s}+\frac{1}{2} \right) \log \left(\frac{n}{p_s} \right) - \frac{n}{p_s}
- \log 3 -\left(n+\frac{1}{2} \right) \log n + n \\
& \quad - \log 3 -\left(\frac{nq_s}{p_s}+\frac{1}{2} \right) \log \left(\frac{nq_s}{p_s} \right) + \frac{nq_s}{p_s} + n \log p_s + \frac{nq_s}{p_s} \log q_s \\
& =
p_s-2\log 3 - \frac{1}{2} \log q_s - \frac{1}{2} \log n
\geq 
- 2 \log 3 - \frac{1}{2} \log n 
\end{align*}
from which we get $\Psi_s(n) \geq \frac{1}{9 \sqrt{n}}$.
\end{rem}

\begin{proof}[Proof of Lemma \ref{lem:preparation}] 
Denoting by $X=(X_t)_{t \geq0}$ the $M/M/\infty$ process, we know that $\mathbb{P}(Y_{n,s} + Z_s = k) = \P(X_s = k |X_0=n).$
Since $\pi_1$ is reversible for $X$, we have 
\begin{equation}\label{eq:rev}
\pi_1(n)\frac{\P(X_s = k |X_0=n)}{\pi_1(k)} = \P(X_s = n |X_0=k)
\end{equation}
and so $\Psi_s(n) = n!\sup_{k\geq0} \P(X_s = n |X_0=k) = n! \sup_{k\geq 0} \P(Y_{k,s} + Z_s = n).$
Using \eqref{eq:rev}, one first sees that if $0\leq k\leq n-1$, then 
\[
 \P(X_s = n |X_0=k) \leq \frac{k!}{n!} \leq \frac{1}{n}.
\]
Now, if $k\geq n$, then using Lemma \ref{binom} below, we see that
\[
\P(Y_{k,s} + Z_s = n) =\sum_{i=0}^{k} \P(Y_{k,s} = i) \P(Z_s = n-i) \leq \sup_{0\leq i \leq k} \P(Y_{k,s} = i) \leq \frac{c_p}{\sqrt{k}} \leq \frac{c_p}{\sqrt{n}},
\]
which completes the proof.
\end{proof}


\begin{lem}\label{binom}
For any $p \in (0,1)$, there exists $c_p>0$ such that 
\begin{equation}\label{eq:binom}
\sup_{0\leq i \leq k} \binom{k}{i} p^i(1-p)^{k-i} \leq \frac{c_p}{\sqrt{k}},\qquad \forall k\geq1.
\end{equation}
\end{lem}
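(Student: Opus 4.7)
The plan is to combine Stirling's inequality \eqref{Stirling} (recorded just above in the paper) with elementary properties of the binary relative entropy. First I would dispose of the boundary indices $i=0$ and $i=k$, where the expression equals $(1-p)^k$ and $p^k$ respectively: since the map $k \mapsto \sqrt{k}\, r^k$ is bounded on $\mathbb{N}$ for every $r\in(0,1)$, both terms are already of the form $c_p/\sqrt{k}$.

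For $1 \le i \le k-1$, applying \eqref{Stirling} (the upper bound on $k!$ together with the lower bounds on $i!$ and $(k-i)!$) gives, after collecting the polynomial and exponential parts,
\[
\binom{k}{i} p^i (1-p)^{k-i} \;\le\; 3\sqrt{\frac{k}{i(k-i)}}\; e^{-k\,H(i/k\,\Vert\, p)},
\]
where $H(\alpha\,\Vert\, p) := \alpha\log(\alpha/p) + (1-\alpha)\log((1-\alpha)/(1-p)) \ge 0$ is the Kullback--Leibler divergence between the Bernoulli laws $\mathcal{B}(\alpha)$ and $\mathcal{B}(p)$, vanishing exactly at $\alpha = p$. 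This estimate is the technical heart of the argument.

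Then I would split $\{1,\ldots,k-1\}$ into a \emph{near} region $\{i : |i/k-p|\le \delta\}$ and its complement, with threshold $\delta := \min(p,1-p)/2$. In the near region one has $i \ge kp/2$ and $k-i \ge k(1-p)/2$, so $i(k-i) \ge k^2 p(1-p)/4$ and the prefactor is already $\sqrt{k/(i(k-i))} \le 2/\sqrt{kp(1-p)}$; since the exponential factor is $\le 1$, the whole bound is $\le 6/\sqrt{kp(1-p)}$. In the far region, continuity and strict positivity of $H(\cdot\,\Vert\, p)$ on the compact set $\{\alpha\in[0,1] : |\alpha-p|\ge\delta\}$ yield $H(i/k\,\Vert\, p) \ge \eta_p$ for some $\eta_p>0$ depending only on $p$; together with the trivial $\sqrt{k/(i(k-i))} \le \sqrt{k}$, this produces the bound $3\sqrt{k}\, e^{-\eta_p k}$, which is itself $\le c'_p/\sqrt{k}$ because $k \mapsto k e^{-\eta_p k}$ is bounded on $\mathbb{N}$. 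Taking $c_p$ to be the maximum of the constants arising in the three sub-cases gives \eqref{eq:binom}.

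I do not expect any genuine obstacle: this is essentially a quantitative de Moivre--Laplace estimate carried out by elementary means. The bulk of $\mathcal{B}(k,p)$ concentrates on a $\sqrt{k}$-scale window around $kp$ (producing the $1/\sqrt{k}$ directly from the Stirling prefactor), while outside this window the tails pay an exponential cost $e^{-\eta_p k}$ that easily absorbs the $\sqrt{k}$ from the trivial prefactor bound. The only item to watch is choosing the threshold $\delta$ so that $i$ and $k-i$ remain of order $k$ throughout the near region; everything else reduces to \eqref{Stirling} and the nonnegativity of $H(\cdot\,\Vert\, p)$.
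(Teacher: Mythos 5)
Your proof is correct, and the central estimate is the same as the paper's: rewrite the term $p^i(1-p)^{k-i}/\bigl((i/k)^i(1-i/k)^{k-i}\bigr)$ that Stirling produces as $e^{-kH(i/k\,\Vert\,p)}$, and then use only that $H\ge 0$. The difference lies in how the two arguments locate the relevant indices. The paper invokes the known fact that the mode of $\mathcal{B}(k,p)$ is $i_k=\lfloor(k+1)p\rfloor$, so the supremum over $i$ collapses to a single point; since that point is automatically within $O(1)$ of $kp$, the prefactor $\sqrt{k/(i_k(k-i_k))}$ is directly of order $1/\sqrt{k}$, with the degenerate cases $i_k\in\{0,k\}$ confined to $k\le k_0(p)$. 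You instead bound the supremum over all $i$ by a near/far split around $kp$: near $kp$ the Stirling prefactor already gives $1/\sqrt{k}$ (with the exponential factor merely $\le 1$), and far from $kp$ the uniform lower bound $H(\cdot\,\Vert\,p)\ge\eta_p>0$ on the compact complement of a neighborhood of $p$ gives exponential decay that absorbs the crude $\sqrt{k}$ prefactor. Your route is slightly longer but entirely self-contained and closer to a Chernoff/de\ Moivre--Laplace argument, and it would adapt without change to settings where the mode is not explicitly known; the paper's route is shorter precisely because it outsources the localization to the classical mode formula. Both are valid and yield the same conclusion.
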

\proof
When $p\in (0,1)$, it is well known that the mode of the binomial distribution $\mathcal{B}(k,p)$ is $i_k:=\lfloor (k+1)p \rfloor$. In other words,
\[
\sup_{0\leq i \leq k} \binom{k}{i} p^i(1-p)^{k-i} = \binom{k}{i_k} p^{i_k}(1-p)^{k-i_k}
\]
Using \eqref{Stirling}, one gets that, when $1\leq i\leq k-1$
\[
\binom{k}{i} p^i(1-p)^{k-i}  \leq 3 \sqrt{\frac{k}{i(k-i)}} \frac{ p^i(1-p)^{k-i}}{\left(\frac{i}{k}\right)^i \left(1 - \frac{i}{k}\right)^{k-i}}\leq 3 \sqrt{\frac{k}{i(k-i)}},
\]
where the last inequality follows from the fact that the function $f(s) = 
s^i(1-s)^{k-i}$, $s \in [0,1]$, reaches its maximum at $s = \frac{i}{k}$. Therefore, if $1\leq i_k\leq k-1$, it holds
\[
\binom{k}{i_k} p^{i_k}(1-p)^{k-i_k}  \leq 3  \sqrt{\frac{k}{i_k(k-i_k)}} \leq c_p' \frac{1}{\sqrt{k}},
\]
for some $c'_p$ depending only on $p$.
Now $i_k = 0$ or $i_k = k$ can only occur if $k \leq \max( \frac{1-p}{p} ;  \frac{p}{1-p}):=k_0$. So letting $c''_p = \sup_{0\leq i \leq k, k\leq k_0} \sqrt{k}\binom{k}{i} p^i(1-p)^{k-i}$, we see that \eqref{eq:binom} holds with $c_p = \max (c'_p ; c''_p)$.
\endproof

With Lemma \ref{lem:preparation} in hand, we are in position to prove Theorem \ref{thm:talagrandmmi}.

\begin{proof}[Proof of Theorem \ref{thm:talagrandmmi}]
We first observe that (strategy of the uniform bound on $P_t$)
\[
\sup_{ f \geq 0 : \int f d\pi_1 =1} \pi_1 \left( \{n : P_sf(n) \geq t \} \right)
\leq 
\pi \left( \left\{n : \sup_{ f \geq 0 : \int f d\pi_1 =1} P_sf(n) \geq t \right\} \right) .
\]
We claim that
\[
\sup_{ f \geq 0 : \int f d\pi_1 =1} P_sf(n) =  \sup_{k\geq 0} \frac{\mathbb{P}(Y_{n,s}+Z_s=k)}{\pi_1(k)} 
\] 
where we recall that $Y_{n,s}$ is a binomial variable of parameter $n$ and $p_s=e^{-s}$ and $Z_s$ is Poisson variable with parameter $q_s=1-p_s$. Indeed  if one considers $f_o=\mathds{1}_{k_o}/\pi_1(k_o)$, for some integer $k_o$, one immediately sees that
\[
\sup_{f \geq 0: \int f d\pi_1=1} P_sf(n) \geq P_sf_o(n) = \sum_{k=0}^\infty f_o(k)\mathbb{P}(X_{n,s}+Y_s=k) = \frac{\mathbb{P}(X_{n,s}+Y_s=k_o)}{\pi_1(k_o)} .
\]
Therefore $\sup_{ f \geq 0 : \int f d\pi_1 =1} P_sf(n) \geq  \sup_{k\geq 0} \frac{\mathbb{P}(X_{n,s}+Y_s=k)}{\pi_1(k)}$. 
On the other hand, for any $f$ non-negative with $\int f d\pi_1 =1$, 
\begin{align*}
P_sf(n) 
& = \sum_{k=0}^\infty f(k)\mathbb{P}(X_{n,s}+Y_s=k) = \sum_{k=0}^\infty f(k) \pi_1(k) \frac{\mathbb{P}(X_{n,s}+Y_s=k)}{\pi_1(k)}  \leq 
\sup_{k\geq 0} \frac{\mathbb{P}(X_{n,s}+Y_s=k)}{\pi_1(k)} 
\end{align*}
which proves the claim.

Recall the definition of $\Psi_s$ right before Lemma \ref{lem:preparation}. From the claim and Lemma 
\ref{lem:preparation}, we have
\[
\sup_{ f \geq 0 : \int f d\pi_1 =1} \pi_1 \left( \{n : P_sf(n) \geq t \} \right)
\leq 
\pi_1\left( \{n : n! \Psi_s(n) \geq t \} \right)
\leq 
\pi_1 \left( \{n : n!/\sqrt{n} \geq t/c \} \right) 
\]
for some constant $c$ depending only on $s$.
Using \eqref{Stirling}, we have $n!/\sqrt{n} \leq 3 \exp \{n\log n -n\}$. 
Hence, setting $H(x):=x \log x - x$, which is an increasing function (hence one to one whose inverse we denote by $H^{-1}$),
\begin{align*}
\sup_{ f \geq 0 : \int f d\pi_1 =1} \pi_1 \left( \{n : P_sf(n) \geq t \} \right) 
& \leq 
\pi_1\left( \{n : e^{H(n)} \geq t/(3c) \} \right) \\
&=
\pi_1 \left( \{ n: n \geq H^{-1}( \log (t/(3c))) \} \right) \\
& =
\pi_1 \left( \{ n:n \geq \lceil H^{-1}( \log (t/(3c))) \rceil \} \right).
\end{align*}
(Here, as usual, $\lceil \cdot \rceil$ denotes the ceiling function, that 
maps $x$ to the least integer greater than or equal $x$).
Next we observe that, according to Lemma \ref{lem:Ptail}, for any integer 
$u \geq 2$,
\[
\pi_1(\{ n: n \geq u\}) 
\leq 2 \frac{e^{-\Phi_1(u)}}{\sqrt{u}}
\leq \frac{e^{-H(u)}}{\sqrt{u}}.
\]
Since $x \mapsto e^{-H(x)}/\sqrt{x}$ is decreasing, for $t$ large enough, 
we end up with
\begin{align*}
\sup_{ f \geq 0 : \int f d\pi_1 =1} \pi_1 \left( \{n : P_sf(n) \geq t \} \right) 
& \leq 
\frac{e^{-H(\lceil H^{-1}( \log (t/(3c))) \rceil)}}{\sqrt{\lceil H^{-1}( \log (t/(3c))) \rceil}}
\leq 
\frac{e^{-H(H^{-1}( \log (t/(3c))))}}{\sqrt{H^{-1}( \log (t/(3c)))}} \\
& =
\frac{3c}{t\sqrt{H^{-1}( \log (t/(3c)))}} .
\end{align*}
Finally, we observe that $H^{-1}(x) \geq \frac{x}{\log x}$ for $x$ large enough, from which the expected result follows.
\end{proof}


\section{Laguerre's semi-groups} \label{sec:Laguerre}

In this section we deal with the Laguerre semi-groups on $(0,\infty)$. We 
may prove that both log-semi-convexity and deviation bounds for log-semi-convex functions (Problem (1) and (2) in the introduction), do not hold. On the other hand, the strategy of the uniform bound on $P_t$ applies and 
will allow us to prove the Talagrand regularization effect for the Gamma probability measures.

In the next subsection, we introduce the Laguerre operator in its full generality.  However, in the subsequent subsection we may, for simplicity, reduce to the sole case $\alpha=3/2$ (see below) which is simpler to handle. Many computations could probably be done for general $\alpha$ but at the price of heavy technicalities. We preferred a simpler presentation rather than a complete one in order to present the phenomenon occurring in the setting of Laguerre's operators.

\subsection{Introduction}
On $(0,\infty)$ denote by $\nu_{\alpha}$, with $\alpha >0$, the Gamma distribution with density
$$
\varphi_{\alpha}(x):= \frac{1}{\Gamma(\alpha)}x^{\alpha-1}e^{- x} , \quad x >0 
$$
with respect to the Lebesgue measure on $(0,\infty)$. It is the reversible measure of the diffusion operator $L^{\alpha}$ (which is negative), called Laguerre operator, defined on smooth enough functions $f$ as
$$
L^{\alpha} f(x)=x f''(x) +(\alpha- x)f'(x), \quad x>0 .
$$
The Laguerre Operator is well-known and related to Laguerre's polynomials
$$
Q_k^{\alpha} (x):= \frac{1}{k!} x^{-\alpha+1}e^{ x} \frac{d^k}{dx^k} \left( x^{k+\alpha-1} e^{- x} \right),
\quad k \in \mathbb{N}, \quad x >0 .
$$
First Laguerre's polynomials are (we omit the super scripts $\alpha$ for simplicity)
$Q_0(x)=1$, $Q_1(x)= \alpha- x$,  $Q_2(x)=\frac{\alpha(\alpha+1)}{2} - (\alpha+1)x + \frac{1}{2} x^2$.
Moreover, the family $(Q_k^{\alpha})_{k \geq 0}$ is an orthogonal decomposition of $L^{\alpha}$
in $\mathcal{L}^2((0,\infty), \gamma_{\alpha})$: namely it is an orthogonal basis of $\mathcal{L}^2((0,\infty), \gamma_{\alpha})$
and each $Q_k^{\alpha}$ is an eigenfunction of $L^{\alpha}$ with associated eigenvalue $-k$, $k=0,1,\dots$.
The associated semi-group, we denote by $(P_t^{\alpha})_{t \geq 0}$, takes the form (see \textit{e.g.}\ \cite{AS64:book})
$$
P_t^{\alpha}f(x) =  \int G_t^{\alpha}(x,y)f(y)d \nu_{\alpha}(y) 
$$
for any $f \in \mathcal{L}^p((0,\infty),\gamma_{\alpha})$ for some $p\geq 
1$, with kernel
$$
G_t^{\alpha}(x,y):=\frac{\Gamma(\alpha)e^{t}}{e^{t}-1}  \left(\frac{e^{t}}{xy} \right)^\frac{\alpha-1}{2} \exp \left\{ -\frac{1}{ e^{t} -1}(x+y)\right\} I_{\alpha -1} \left( \frac{2\sqrt{xye^{t}}}{e^{t}-1} \right) .
$$
Here $I_\beta$ denotes the modified Bessel function of the first kind of order $\beta > -1$, defined as
$$
I_\beta(x):= \sum_{n=0}^\infty \frac{1}{n! \Gamma(n+\beta+1)} \left( \frac{x}{2} \right)^{2n+\beta}, \quad x >0 .
$$
$(P_t)_{t \geq 0}$ is defined as

\subsection{{log-semi-convexity} for the Laguerre semi-groups}

We will prove in this section that there does not exist any uniform lower 
bound (in $x$ and $f$)  on $(\log P_t^\alpha)''(x)$.
For simplicity, and since $I_{1/2}(x) = \sqrt{2/\pi x}\sinh (x)$, $x >0$, is explicit, we may focus only on the case 
$\alpha=3/2$ for which we have (we omit the superscript $\alpha=3/2$ all along this subsection)
$$
P_tf(x) =  \int G_t(x,y)f(y) d\nu(y), \quad 
G_t(x,y):=\frac{\Gamma(\frac{3}{2})e^{t}}{e^{t}-1} \left(\frac{e^{t}}{xy} \right)^\frac{1}{4} \!\!\exp \left\{ -\frac{x+y}{ e^{t} -1}\right\} I_{\frac{1}{2}} \left( \frac{2\sqrt{xye^{t}}}{e^{t}-1} \right) 
$$
with $d\nu(x)=\varphi(x)dx=\frac{\sqrt{x}}{\Gamma(3/2)} e^{-x} dx$. 
Now consider the special test function $f(y)=\delta_y/\varphi(y)$ so that
$P_tf(x)=G_t(x,y)$ and therefore, setting $c_t:=2e^{t/2}/(e^t-1)$, 
$$
\log P_tf(x) 
= 
c_{y,t} - \frac{1}{4} \log x - \frac{x}{e^t-1} + \log I_{\frac{1}{2}} \left( c_t\sqrt{xy} \right) 
=
c'_{y,t}  - \frac{1}{2} \log x - \frac{x}{e^t-1} + \log \sinh (c_t \sqrt{xy})
$$
where $c_{y,t}, c'_{y,t}$ are constants that depend only on $y$ and $t$.
It follows that
$$
(\log P_tf)'(x) = - \frac{1}{2x}  - \frac{1}{e^t-1} + \frac{c_t \sqrt{y}}{2\sqrt{x}}  \coth (c_t \sqrt{xy})
$$
and
\begin{align*}
(\log P_tf)''(x) 
& = 
\frac{1}{2x^2}  - \frac{c_t \sqrt{y}}{4 x\sqrt{x}}  \coth (c_t \sqrt{xy})
+\frac{c_t^2 y}{4 x}(1-\coth (c_t \sqrt{xy})^2) \\
& =
\frac{1}{4x^2}\left(2 + c_t^2xy- c_t \sqrt{xy} \coth (c_t \sqrt{xy}) - [c_t \sqrt{xy} \coth (c_t \sqrt{xy})]^2\right)\\
& =
\frac{1}{4x^2}\left(2 + z^2 - z \coth (z) - z^2 \coth (z)^2\right) 
=
\frac{1}{4x^2}\left(2 - \frac{z^2}{(\sinh z)^2} - z \coth (z) \right) 
\end{align*}
where in the third line we set $z=c_t \sqrt{xy}$. Since $z \coth z \to \infty$ as $z$ tends to infinity, and since $z/\sinh z \leq 1$ (for $z >0$), the latter shows that $(\log P_tf)"(x)$ cannot be bounded below by a constant independent on $f$ and $x$. Hence, log-semi-convexity (Problem (1) of the introduction) does not hold.

\subsection{Deviation bounds for log-semi-convex functions}

{In this section, we investigate deviation bounds for log-semi-convex functions. We prove that, due to the weak tail of the measures 
$\nu_\alpha$, the log-semi-convexity property does not help to get a better bound than Markov's inequality.
More precisely, setting $\mathcal{F}_{\beta,\alpha}:=\{f \geq 0: (\log f)"\geq  - \beta, \int fd\nu_\alpha=1\}$, $\beta \in \mathbb{R}$,
we have the following proposition.}

\begin{prop}
Let $\alpha >0$. Then, for all $\beta >0$, 
$$
\limsup_{t \to \infty} t \sup_{f \in \mathcal{F}_{\beta,\alpha}} \nu_\alpha(\{x : f(x) \geq t \}) > 0 .
$$
\end{prop}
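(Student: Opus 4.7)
The plan is to adapt the strategy of Proposition~\ref{prop:ce-mmi} (the analogous Poisson statement) to the continuous setting. Following that blueprint I would consider the family of ``shifted Gaussian" densities
\[
f_a(x) := \exp\left(-\frac{\beta}{2}(x-a)^2 + Z(a)\right), \qquad x > 0,
\]
indexed by a parameter $a > 0$, where $Z(a) \in \R$ is chosen so that $\int_0^\infty f_a\, d\nu_\alpha = 1$. A direct computation gives $(\log f_a)''(x) = -\beta$, so each $f_a$ lies in $\mathcal{F}_{\beta,\alpha}$. The goal is then to exhibit a threshold $t_a \to \infty$ along which $t_a \,\nu_\alpha(\{f_a \geq t_a\})$ stays bounded below by a strictly positive constant.

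Fix $r > 0$ once and for all and set $t_a := f_a(a+r) = \exp(-\beta r^2/2 + Z(a))$. For $a \geq r$, the level set $\{f_a \geq t_a\}$ contains the symmetric interval $[a-r, a+r]$. Since for large $a$ the density $\varphi_\alpha(x) = x^{\alpha-1}e^{-x}/\Gamma(\alpha)$ varies slowly on $[a-r, a+r]$ (its minimum there is $\varphi_\alpha(a+r)$, of the same order as $\varphi_\alpha(a)$ up to a constant depending only on $\alpha, r$), we obtain
\[
\nu_\alpha([a-r, a+r]) \geq c_{\alpha, r} \cdot a^{\alpha-1} e^{-a}
\]
for some constant $c_{\alpha,r} > 0$ and all $a$ large enough.

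The heart of the argument is an upper bound of the same order on the normalizing integral $e^{-Z(a)} = \int_0^\infty e^{-\beta(x-a)^2/2} \varphi_\alpha(x)\,dx$. Split it at $x = a/2$. On $[0, a/2]$ the Gaussian factor is bounded by $e^{-\beta a^2/8}$, so that piece is super-exponentially small in $a$ and negligible compared to $a^{\alpha-1}e^{-a}$. On $[a/2, \infty)$ substitute $y = x-a$ and complete the square $\tfrac{\beta}{2}y^2 + y = \tfrac{\beta}{2}(y+\tfrac{1}{\beta})^2 - \tfrac{1}{2\beta}$; after uniformly bounding $(a+y)^{\alpha-1}$ by $C_\alpha a^{\alpha-1}$ on the region where $e^{-\beta y^2/2}$ is non-negligible, one obtains
\[
\int_{a/2}^\infty e^{-\beta(x-a)^2/2} \varphi_\alpha(x)\,dx \leq C'_\alpha \cdot a^{\alpha-1} e^{-a + 1/(2\beta)} \sqrt{2\pi/\beta}.
\]
Combining these estimates yields $e^{-Z(a)} \leq C''_\alpha\, a^{\alpha-1} e^{-a}$ for $a$ large. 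In particular $t_a \to \infty$, and multiplying the two bounds gives
\[
t_a \cdot \nu_\alpha(\{f_a \geq t_a\}) \geq e^{-\beta r^2/2} \cdot \frac{c_{\alpha,r}\, a^{\alpha-1} e^{-a}}{C''_\alpha\, a^{\alpha-1} e^{-a}} = \kappa_{\alpha, \beta, r} > 0,
\]
which proves the proposition upon taking $\limsup_{a \to \infty}$.

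The main technical obstacle is that upper bound on $e^{-Z(a)}$. For $\alpha \geq 1$ the function $\Psi_a(x) := -\tfrac{\beta}{2}(x-a)^2 + \log \varphi_\alpha(x)$ satisfies $\Psi_a''(x) = -\beta - (\alpha-1)/x^2 \leq -\beta$, so the clean concavity bound $\Psi_a(x) \leq \Psi_a(x_a) - \tfrac{\beta}{2}(x-x_a)^2$ reduces the integral to an explicit Gaussian immediately. For $\alpha < 1$ this $-\beta$-concavity fails and $\varphi_\alpha$ is unbounded at $0$, so one really needs the splitting at $a/2$ to suppress the singularity via $e^{-\beta(x-a)^2/2} \leq e^{-\beta a^2/8}$ on $[0, a/2]$; the remaining part is then a routine Laplace-type estimate. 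The rest of the argument is essentially bookkeeping.
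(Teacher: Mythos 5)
Your proof is correct and follows essentially the same route as the paper: the same family of Gaussian-windowed test functions $f_a$, with the conclusion driven by the Laplace-type estimate that the normalizing constant $e^{-Z(a)}=\int e^{-\beta(x-a)^2/2}\,d\nu_\alpha$ is of order $\varphi_\alpha(a)=a^{\alpha-1}e^{-a}$, which matches the order of $\nu_\alpha([a-r,a+r])$. The only differences are cosmetic (you parametrize by $a$ and read off a threshold $t_a$, whereas the paper fixes $t$ and solves $\varphi_\alpha(a)\asymp 1/t$ for $a$) and that you supply the estimate on $e^{-Z(a)}$ — including the split at $a/2$ needed when $\alpha<1$ — which the paper leaves as ``easy to prove (we omit details)'' and in fact states with a typo (it writes $Z(a)\leq c\,\varphi_\alpha(a)$ where it means $e^{-Z(a)}\leq c\,\varphi_\alpha(a)$).
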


\begin{proof}
We proceed as in the proof of Proposition \ref{prop:ce-mmi}. Fix $\beta >0$ and, for $a >0$, define
$f_a(x)=\exp\{-\frac{\beta}{2}(x-a)^2 + Z(a)\}$ where 
$Z(a):=-\log \int \exp\{-\frac{\beta}{2}(x-a)^2\}d\nu_\alpha(x)$ is devised so that $\int f_ad\nu_\alpha = 1$.
It is easy to prove (we omit details) that
$Z(a) \leq c \varphi_\alpha(a)$ for some positive constant $c$ that depends on $\beta$ and $\alpha$.
Hence,
\begin{align*}
\nu_\alpha(f_a \geq t ) 
& \geq 
\nu_\alpha \left( (x-a)^2 \leq \frac{2}{\beta}( \log t + \log \varphi_\alpha(a) + \log c) \right) .
\end{align*}
Now choose $a$ so that $\frac{2}{\beta}( \log t + \log \varphi_\alpha(a) + \log c) = 1$. Note that $a$ and $t$ 
must jointly go to infinity in order for the latter constraint to be satisfied, since $\varphi_\alpha(a) \to 0$ as $a \to \infty$. We infer that
$$
\nu_\alpha(f_a \geq t ) \geq \nu_\alpha( x \in [a-1,a+1] ) 
= 
\int_{a-1}^{a+1} \varphi_\alpha(x) dx 
\geq 
c' \varphi_\alpha(a)
$$
where the last inequality holds for $a$ (or equivalently $t$) large enough and follows after some approximation and algebra left to the reader (here $c'$ is a constant that depends only $\alpha$). But $a$ has been chosen so that $\varphi_\alpha(a)=\frac{c''}{t}$ for some constant $c''>0$ depending only on $\alpha$ and $\beta$. Hence,
$t \nu_\alpha(f_a \geq t ) \geq c''$ which proves the proposition.
\end{proof}

\begin{rem}
In \cite{GMRS17}, deviation bounds  for log-convex densities ($\beta=0$) under the exponential measure ($\alpha=1$) were deduced from the Gaussian case using a simple push-forward argument. The same argument could be easily used to get deviation bounds for log-convex functions for other Gamma distributions. 

As already mentioned, the above result ($\beta>0$) is due to weak tail of 
$\nu_\alpha$. Indeed, for such measures, we have 
$\int_x^\infty d\nu_\alpha \sim_{\infty} \varphi_\alpha(x)$, while for example for the standard Gaussian measure,
$\int_x^\infty e^{-t^2/2}dt \sim_{\infty} e^{-x^2/2}/x$, \textit{i.e.}\ there is a gain of a factor $1/x$ with respect to the Gaussian density in this case.
\end{rem}

\subsection{The Talagrand regularization effect}
In this final section, we prove Talagrand's regularization effect for Laguerre's semi-groups, by means of the strategy of the uniform bound on $P_t$. 
Such a regularization makes sense also in this setting since the Laguerre 
semi-groups enjoy an hypercontractive property \cite{Kor87, GLLNU05}.

\begin{thm}
Let $\alpha >0$ and denote by $(P_s^\alpha)_{s \geq 0}$ the Laguerre semi-group reversible with respect to $\nu_\alpha$. 
Then, for any $s>0$, there exists a constant $c$ (that depends only on $s$ and $\alpha$) such that for all { non-negative} 
real functions $f$ in $\mathbb{L}^1((0,\infty),\nu_\alpha)$ with $\int f d\nu_\alpha =1$, 
$$
\nu_\alpha(\{P_s^\alpha f \geq t \}) \leq  \frac{c}{t \sqrt{\log t}}, \qquad t >1 .
$$
\end{thm}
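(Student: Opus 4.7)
I would apply the same strategy of the supremum that succeeded for the $M/M/\infty$ case in Section~\ref{sec:tal-M/M/infty}. Since the kernel representation $P_s^\alpha f(x) = \int G_s^\alpha(x,y) f(y)\, d\nu_\alpha(y)$ holds for every nonnegative $f$ with $\int f\, d\nu_\alpha = 1$, testing against approximate Dirac masses $f = \mathbf{1}_{[y_0,y_0+\varepsilon]}/\nu_\alpha([y_0,y_0+\varepsilon])$ shows
\[
\sup_{f \geq 0,\, \int f\, d\nu_\alpha = 1} P_s^\alpha f(x) \;=\; \sup_{y>0} G_s^\alpha(x,y) \;=:\; \Phi_s(x).
\]
It therefore suffices to prove that $\nu_\alpha(\{\Phi_s \geq t\}) \leq c/(t\sqrt{\log t})$.

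The main step is an upper bound of the form $\Phi_s(x) \leq C_{s,\alpha}\, x^{-\alpha+1/2}\, e^x$ for large $x$. I would start from the classical two-sided estimate for the modified Bessel function, namely $I_{\alpha-1}(z) \leq K_\alpha\, z^{\alpha-1}(1+z)^{-\alpha+1/2}\, e^z$ for $z \geq 0$, which interpolates between the behaviors $I_{\alpha-1}(z) \sim (z/2)^{\alpha-1}/\Gamma(\alpha)$ as $z \to 0$ and $I_{\alpha-1}(z) \sim e^z/\sqrt{2\pi z}$ as $z \to \infty$. Combined with the algebraic identity
\[
x + y - 2\sqrt{xy\, e^s} \;=\; \bigl(\sqrt{y} - e^{s/2}\sqrt{x}\bigr)^2 - (e^s - 1)\, x,
\]
obtained by completing the square, all powers of $xy$ coming from the prefactor $(e^s/(xy))^{(\alpha-1)/2}$ and from $z^{\alpha-1}$ cancel, yielding
\[
G_s^\alpha(x,y) \;\leq\; C_{s,\alpha}\, \bigl(1+z(x,y)\bigr)^{-\alpha+1/2}\, \exp\!\Bigl\{ x - \frac{(\sqrt{y} - e^{s/2}\sqrt{x})^2}{e^s - 1}\Bigr\},
\]
with $z(x,y)= 2\sqrt{xy\, e^s}/(e^s-1)$. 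The Gaussian factor is maximized at $y = e^s x$ (where it equals $1$) and is sharply concentrated around this value for large $x$; evaluating $(1+z)^{-\alpha+1/2}$ at $y = e^s x$ (where $z = 2xe^s/(e^s-1)$) yields $\Phi_s(x) \leq C'_{s,\alpha}\, x^{-\alpha+1/2}\, e^x$ for $x \geq 1$, while $\Phi_s$ is trivially bounded on $[0,1]$ by inspection of the kernel.

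Finally, inverting the implicit equation $C'_{s,\alpha}\, x^{-\alpha+1/2} e^x = t$ gives a threshold $x_t = \log t + (\alpha-1/2)\log\log t + O(1)$, so that $\{\Phi_s \geq t\} \subseteq [x_t,\infty)$ for $t$ large. The classical Gamma-tail estimate $\nu_\alpha([u,\infty)) \sim u^{\alpha-1} e^{-u}/\Gamma(\alpha)$ as $u \to \infty$, which follows from a single integration by parts, then yields
\[
\nu_\alpha\bigl(\{\Phi_s \geq t\}\bigr) \;\lesssim\; \frac{(\log t)^{\alpha-1}\, e^{-x_t}}{\Gamma(\alpha)} \;\lesssim\; \frac{1}{t\sqrt{\log t}},
\]
since $e^{-x_t} \sim c_s\, t^{-1} (\log t)^{-\alpha+1/2}$. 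The remaining range $1 < t \leq t_0$ is handled trivially by Markov's inequality. The main technical obstacle will be making the Bessel inequality uniform down to $z = 0$ when $0 < \alpha < 1$, where $I_{\alpha-1}$ has a mild singularity at the origin; this is precisely why the factor $z^{\alpha-1}$ must be kept in the above estimate, and its cancellation with the $(xy)^{-(\alpha-1)/2}$ prefactor of $G_s^\alpha$ is what makes the bound on $\Phi_s$ hold uniformly in $y$ across all regimes.
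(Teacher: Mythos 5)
Your proof is correct and follows essentially the same route as the paper: the strategy of the supremum reduces the problem to bounding $\sup_{y>0} G_s^\alpha(x,y)$, and both arguments then combine asymptotics of $I_{\alpha-1}$ with the Gamma tail $\nu_\alpha([u,\infty))\sim u^{\alpha-1}e^{-u}/\Gamma(\alpha)$. Your completing-the-square identity and unified Bessel bound $I_{\alpha-1}(z)\lesssim z^{\alpha-1}(1+z)^{-\alpha+1/2}e^z$ give a cleaner computation of the same intermediate estimate $\sup_y G_s^\alpha(x,y)\lesssim x^{-\alpha+1/2}e^x$ than the paper's case split on $y \lessgtr y_x$ followed by optimizing $a\log z + bz - z^2$.
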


\begin{proof}
Fix $s>0$ and $t>1$.
We will use the strategy of the uniform bound on $P_t$. Namely, we first observe that
$$
\sup_{f \geq 0, \int f d\nu_\alpha=1} \nu_\alpha(\{P_s^\alpha f \geq t \})
\leq \nu_\alpha(\{ \sup_{f \geq 0, \int f d\nu_\alpha=1} P_s f \geq t \}) .
$$
Then, it is easy to see that, thanks to the kernel representation,
$$
 \sup_{f \geq 0, \int f d\nu_\alpha=1} P_s^\alpha f (x) = \sup_{y>0} G_s^\alpha(x,y), \qquad x >0 .
$$
Therefore we are left with an estimate on $G_s(x,y)$ (we look for an upper bound).
The following asymptotics are know \cite{AS64:book} to hold
$I_\beta(x) \sim_{\infty} \frac{e^x}{\sqrt{2 \pi x}}$ and $I_\beta(x) \sim_{0} \frac{\left(x/2\right)^\beta}{\Gamma(\beta+1)}$.
Up to a constant $c$ that depends on $\alpha$, we can safely assert that $I_{\alpha -1}(u) \leq ce^u/\sqrt{ u}$, for $u \geq 1$ and $I_{\alpha -1}(u) \leq c u^{\alpha-1}$ for $u \leq 1$.
In particular,
\begin{align*}
\sup_{y >0} G_s^\alpha(x,y) 
& \leq 
c' x^\frac{1-\alpha}{2} e^{-\frac{x}{e^s-1}} 
\max \left( x^\frac{\alpha-1}{2}  \sup_{0 < y \leq y_x} e^{-\frac{y}{e^s-1}} ;
x^{-\frac{1}{4}} \sup_{y >y_x} 
y^\frac{1-2\alpha}{4} e^{-\frac{y}{e^s-1} + \frac{2\sqrt{xye^s}}{e^s-1}} \right) \\
& =
c' x^\frac{1-\alpha}{2} e^{-\frac{x}{e^s-1}} 
\max \Big( x^\frac{\alpha-1}{2}  ; \\
& \qquad x^{-\frac{1}{4}} \exp\left\{ \frac{1}{e^s-1} \sup_{y >y_x} \frac{(1-2\alpha)(e^s-1)}{4}\log y - y + 2\sqrt{xye^s} \right\} \Big) 
\end{align*}
for some constant $c'$ that depends on $s$ and $\alpha$ and where $y_x$ is such that $\frac{2\sqrt{xy_xe^s}}{e^s-1}=1$,
\textit{i.e.}\ $y_x=\frac{(e^s-1)^2}{4xe^s}$. Hence, we need to bound from above
$$
\sup_{y>y_x} \frac{(1-2\alpha)(e^s-1)}{4} \log y - y + 2\sqrt{xye^s} 
=
\sup_{z>(e^s-1)/b} a \log z + bz - z^2 
$$
where we set $a=\frac{(1-2\alpha)(e^s-1)}{2}$ and $b=2\sqrt{xe^s}$ (and used the change of variable $z=\sqrt{y}$, together with the fact that
$\sqrt{y_x}=(e^s-1)/(2\sqrt{xe^s})=(e^s-1)/b$).
Denote by $H(y):=a \log z + bz - z^2$.
$c$ that depends only $s$ and $\alpha$. Hence, in this case, 
It is a tedious but easy exercise to prove that there exists a constant $c>0$ than depends only on $s$ and $\alpha$, and $x_o>0$ such that
$\sup_{z > \sqrt{y_x}} H(z) \leq c + \frac{a}{2}\log x + xe^s$ for $x \geq x_o$ and $\sup_{z > \sqrt{y_x}} H(z) \leq -\frac{1}{cx}$ for $x \leq x_o$. Hence, after some algebra
\begin{align*}
\sup_{y >0} G_s^\alpha(x,y) 
 \leq 
c' \left(1+x^\frac{1-2\alpha}{2} e^{x} \right)
\end{align*}
for some  constant $c'$ that depends only $s$ and $\alpha$. Denote by $F(x):=x + \frac{1-2\alpha}{2}\log x$, $x >0$ and observe that
$F$ increasing for $x$ large enough with inverse function we denote by $F^{-1}$ is also increasing. It is easy to see that $x \geq F^{-1}(x) \geq x - \frac{1-2\alpha}{2}\log x$ (for $x$ large enough). Therefore, for $t$ 
large enough
\begin{align*}
\sup_{f \geq 0, \int f d\nu_\alpha=1} \nu_\alpha(\{P_s^\alpha f \geq t \})
& \leq
\nu_\alpha \left(\left\{x : F(x) \geq \log \left( \frac{t}{c'}-1 \right) \right\} \right) \\
& \leq 
\nu_\alpha \left(\left\{x : x \geq F^{-1}\left(\log \left( \frac{t}{c'}-1 
\right)\right) \right\} \right) \\
& =
\frac{1}{\Gamma(\alpha)}\int_{F^{-1}\left(\log \left( \frac{t}{c'}-1 \right)\right)}^\infty x^{\alpha-1}e^{-x} dx \\
& \leq 
\kappa F^{-1}\left(\log \left( \frac{t}{c'}-1 \right)\right)^{\alpha-1}e^{-F^{-1}\left(\log \left( \frac{t}{c'}-1 \right)\right)} \\
& \leq
\kappa' (\log t)^{\alpha-1} e^{-\log \left( t\right) + \frac{1-2\alpha}{2} \log \log \left(t \right)}
=\kappa' \frac{1}{t \sqrt{\log t}}
\end{align*}
where we used that $\int_u^\infty x^{\alpha-1}e^{-x}dx \leq \kappa u^{\alpha-1}e^{-u}$ for $u$ large enough and $\kappa, \kappa'$ 
are constants that depends only on $\alpha$ and $s$. For $t$ close to $1$, the result follows from Markov's inequality (at the price of a possible 
bigger constant $\kappa'$).
\end{proof}

\medskip
\noindent{\bf Acknowledgment.} We thank Zhen-Qing Chen for enlightening discussions on the topic of this paper and an anonymous referee for his/her constructive comments.




\end{document}